\newtheorem{theorem}{Theorem}[section]
\newtheorem*{teoA}{Theorem A}
\newtheorem*{teoB}{Theorem B} 
\newtheorem*{teoC}{Theorem C}
\newtheorem*{teoD}{Theorem D}
\newtheorem{corollary}[theorem]{Corollary}
\newtheorem{lemma}[theorem]{Lemma}
\newtheorem{proposition}[theorem]{Proposition}
\theoremstyle{definition}
\newtheorem{definition}[theorem]{Definition}
\newtheorem{remark}[theorem]{Remark}
\newcommand{\eps}{\varepsilon}
\newcommand{\R}{\mathbb R}
\newcommand{\RR}{\R}
\newcommand{\ZZ}{\mathbb Z}
\newcommand{\T}{\mathbb T}
\newcommand{\flow}{\mathcal{R}}
\newcommand{\cW}{\mathcal{W}}
\newcommand{\cF}{\mathcal{F}}
\newcommand{\cH}{\mathcal{H}}
\newcommand{\cK}{\mathcal{K}}
\newcommand{\cG}{\mathcal{G}}
\newcommand{\wcF}{\widetilde \cF}
\newcommand{\fol}{\mathcal{F}}
\newcommand{\hol}{\mathcal{H}}
\newcommand{\oo}{\mathcal{O}}
\newcommand{\cL}{\mathcal{L}}
\newcommand{\cE}{\mathcal{E}}
\newcommand{\mt}{\widetilde M}
\newcommand{\ft}{\widetilde f}
\newcommand{\fbs}{\cW^{cs}_{bran}}
\newcommand{\fbu}{\cW^{cu}_{bran}} 
\newcommand{\wfbs}{\widetilde \cW^{cs}_{bran}}
\newcommand{\wfbu}{\widetilde \cW^{cu}_{bran}}
\newcommand{\TT}{\mathbb{T}}
\newcommand{\HH}{\mathbb{H}}
\newcommand{\wg}{\widetilde \cG}
\title[Ergodicity in hyperbolic 3-manifolds]{Ergodicity of partially hyperbolic diffeomorphisms in hyperbolic 3-manifolds}
\author{Sergio R.\ Fenley} 
\address{Florida State University, Tallahassee, FL 32306}
\email{fenley@math.fsu.edu}
\author{Rafael Potrie} 
\address{Centro de Matem\'atica, Universidad de la Rep\'ublica, Uruguay}
\email{rpotrie@cmat.edu.uy}
\urladdr{http://www.cmat.edu.uy/~rpotrie/}
\curraddr{Institute for Advanced Study, Princeton, NJ 08540.}
\begin{document}

\begin{abstract}
We study conservative partially hyperbolic diffeomorphisms in hyperbolic 3-manifolds. We show that they are always accessible and deduce as a result that every conservative $C^{1+}$ partially hyperbolic in a hyperbolic 3-manifold must be ergodic, giving an affirmative answer to a conjecture of Hertz-Hertz-Ures in the context of hyperbolic 3-manifolds. We also get some results for general partially hyperbolic diffeomorphisms homotopic to the identity and in some isotopy classes on Seifert manifolds.
\bigskip

\noindent {\bf Keywords: } Partial hyperbolicity, 3-manifold topology, foliations, ergodicity, accessibility.

\medskip

\noindent {\bf MSC 2010:} 37C40, 37D30, 57R30.
\end{abstract}

\maketitle

\section{Introduction}

The description of the statistical properties of conservative systems is a central problem in dynamics \cite{BDV,KH}.
 Anosov systems are well known to be stably ergodic via the Hopf argument \cite{Anosov}. It took a long time before new examples of stably ergodic diffeomorphisms were shown to exist by Grayson-Pugh-Shub \cite{GPS}: they proved this property for the time one map of the geodesic flow of a surface of constant negative curvature. A key point of the proof was to be able to extend the Hopf argument to foliations whose dimensions do not fill the ambient dimension, using the concept of \emph{accessibility} (see section \ref{s.setting}). This motivated Pugh and Shub to propose their celebrated conjecture about ergodicity and accessibility of partially hyperbolic diffeomorphisms \cite{PS} asserting that accessible ones should be abundant among these systems and that accessibility should imply ergodicity. We refer the reader to \cite{CHHU,Wil} for surveys on these topics. This paper addresses the question of establishing ergodicity of a partially hyperbolic system without need of perturbations by establishing accessibility 
unconditionally among conservative partially hyperbolic diffeomorphisms in certain 3-manifolds or under certain hypothesis.

The main result of this article is the following:

\begin{teoA}
Let $f: M\to M$ be a partially hyperbolic diffeomorphism of class $C^{1+}$ in a closed hyperbolic 3-manifold $M$ which preserves a volume form. Then, $f$ is a $K$-system with respect to volume (and in particular, it is ergodic and mixing). 
\end{teoA}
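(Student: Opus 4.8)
The plan is to obtain Theorem A as a consequence of \emph{accessibility} together with the Burns--Wilkinson ergodicity theorem. The decisive point is to prove that $f$ is accessible, i.e.\ that $M$ is a single accessibility class for the pair $(\cW^{s},\cW^{u})$ of strong stable and strong unstable foliations. Once this is known the rest is essentially formal: since $\dim E^{c}=1$, the product of the center derivative by its conorm is identically $1$, so the center bunching condition collapses to the defining inequality of partial hyperbolicity and holds automatically; hence $f$ is a $C^{1+}$, volume-preserving, partially hyperbolic, accessible and center-bunched diffeomorphism, and the Burns--Wilkinson theorem (which in this setting yields the $K$-property) applies directly to give that $f$ is a $K$-automorphism, hence ergodic and mixing. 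So the whole weight falls on the accessibility statement.

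For accessibility I would first reduce to the case that $f$ is homotopic to the identity. By Mostow rigidity $\mathrm{Out}(\pi_{1}M)$ is finite, so some iterate $f^{k}$ acts trivially on $\pi_{1}M$, and since $M$ is aspherical $f^{k}$ is then homotopic to the identity. The strong stable and strong unstable foliations of $f$ and of $f^{k}$ coincide, so $f$ and $f^{k}$ have exactly the same accessibility classes, and it suffices to prove $f^{k}$ is accessible. Applying the classification of \cite{BFFP} to $f^{k}$, the remaining cases of that classification do not arise when $M$ is hyperbolic, so $f^{k}$ must be a discretized Anosov flow: there are a (topological) Anosov flow $\phi$ on $M$ and a continuous $\tau:M\to\R$ with $f^{k}(x)=\phi_{\tau(x)}(x)$, the center foliation of $f^{k}$ is the orbit foliation of $\phi$, and $E^{s}=E^{ss}_{\phi}$, $E^{u}=E^{uu}_{\phi}$.

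Now suppose, for a contradiction, that $f^{k}$ is not accessible. By the structure theory of accessibility classes in dimension three (Hertz--Hertz--Ures), the union of the non-open accessibility classes is a nonempty, compact, $f^{k}$-invariant set supporting a lamination $\Lambda$ whose leaves are $C^{1}$ and everywhere tangent to $E^{s}\oplus E^{u}=E^{ss}_{\phi}\oplus E^{uu}_{\phi}$; in particular the leaves of $\Lambda$ are transverse to $\phi$. On the other hand $\phi_{t}$ sends strong stable (resp.\ strong unstable) leaves to strong stable (resp.\ strong unstable) leaves, so the partition into accessibility classes is $\phi_{t}$-invariant for every $t$, and hence $\Lambda$ is $\phi$-saturated. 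But a $\phi$-invariant set that is simultaneously a lamination transverse to $\phi$ must be open, and being also closed it is all of $M$; thus $E^{s}\oplus E^{u}$ integrates to a genuine foliation, i.e.\ $\phi$ is jointly integrable. Finally, an Anosov flow on a closed $3$-manifold whose strong bundles span an integrable plane field is orbit equivalent to a suspension of a hyperbolic automorphism of $\T^{2}$, hence lives on a $\mathrm{Sol}$ torus bundle and never on a hyperbolic $3$-manifold. This contradiction shows that $f^{k}$, and therefore $f$, is accessible.

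The main obstacle is the non-accessible case. One must verify carefully that the non-open accessibility classes really organize into an $f$-invariant $su$-lamination with enough regularity to run the argument above, and then rule out joint integrability of the Anosov flow on a hyperbolic manifold; this is where the machinery of \cite{BFFP} — Gromov hyperbolicity of the leaves of the center-stable and center-unstable foliations, their circles at infinity, and the analysis of how the lifted foliations sit inside $\mathbb{H}^{3}$ — should be brought in to control $\widetilde{\Lambda}$ in the universal cover and force the contradiction. By comparison, the passage to a power (and, if needed, a finite cover) homotopic to the identity, the automatic center bunching, and the Burns--Wilkinson input are all routine.
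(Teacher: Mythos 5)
Your high-level reduction---Mostow rigidity to pass to an iterate homotopic to the identity, then accessibility plus Burns--Wilkinson with center bunching automatic for one-dimensional center---matches the paper, and your treatment of the discretized Anosov flow case is a valid and in fact shorter alternative: by using $\phi$-saturation of the HHU lamination you directly force $\Lambda^{su}=M$ and joint integrability of $E^{s}\oplus E^{u}$, bypassing the paper's fixed-leaf/translation dichotomy (Propositions~\ref{p.fixedleaf} and~\ref{p.transl1}, the latter of which instead extracts freely homotopic periodic orbits of the Anosov flow and uses Thurston's slithering estimate on the $\RR$-covered extension $\fol$). One caveat there: the step ``jointly integrable $\Rightarrow$ suspension'' is classical for smooth Anosov flows, but for the merely topological Anosov flow underlying a discretized Anosov flow you should either cite a topological version of Plante's theorem or route the argument through Theorem~\ref{teo.AnosovFlows}.

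The fatal gap is the sentence claiming that ``the remaining cases of that classification do not arise when $M$ is hyperbolic, so $f^{k}$ must be a discretized Anosov flow.'' That is not what the classification of \cite{BFFP} (Theorem~\ref{teo.clasif} here) says. For a hyperbolic $3$-manifold the dichotomy is: either $f$ is a discretized Anosov flow, \emph{or} $f$ is dynamically incoherent and a good lift $\ft$ is a \emph{double translation} on the leaf spaces of the branching foliations $\wfbs,\wfbu$. The double translation possibility has \emph{not} been excluded on hyperbolic manifolds (there are no known examples, but that is a different thing from a theorem), and eliminating it under the non-accessibility hypothesis is where essentially all of this paper's new work lies: the triple translation analysis of Section~\ref{ss.Triple}, which compares the three regulating pseudo-Anosov flows $\Phi_{cs},\Phi_{cu},\Phi_{su}$ via Lefschetz index and quasigeodesic arguments (Propositions~\ref{conjugate}, \ref{conjugate2}, \ref{nonsing}) to derive a contradiction. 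Your proposal silently drops this case, so it does not give a complete proof of accessibility (Theorem~B), and hence not of Theorem~A.
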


As shown by \cite{BurnsWilkinson,HHU-Ergo} building on the arguments introduced in \cite{GPS} as well as several new insights, the proof of this result is a consequence
of the following statement; which is then stronger as it requires a weaker assumption than preserving volume and assumes
less regularity. See subsection \ref{ss.ergodicity}. 

\begin{teoB}
Let $f:M \to M$ be a 
$C^1$-partially hyperbolic diffeomorphism of a 
closed hyperbolic 3-manifold, so that the non wandering
set of $f$ is all of $M$.
Then $f$ is accessible. 
\end{teoB}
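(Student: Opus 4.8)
The plan is to argue by contradiction: assume $f$ is not accessible, and derive enough structural information about the accessibility classes to contradict the topological restrictions imposed by the hyperbolic manifold $M$. First I would recall the dichotomy, due to the work on accessibility classes (Hertz--Hertz--Ures), that the union of non-open accessibility classes forms a compact $f$-invariant set which is laminated by $C^1$ complete surfaces tangent to $E^s \oplus E^u$; since the non-wandering set is all of $M$ and $f$ is conservative-like in that sense, a standard argument shows that if $f$ is not accessible then in fact \emph{every} accessibility class is non-open, so this lamination is all of $M$, i.e. there is an $f$-invariant foliation $\Lambda$ tangent to $E^s\oplus E^u$. The heart of the matter is then to understand this $su$-foliation together with the center-stable and center-unstable branching foliations $\fbs,\fbu$ supplied by the work on partially hyperbolic diffeomorphisms in 3-manifolds, and to run the Fenley--Potrie--type classification in the hyperbolic setting.

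Next I would pass to the universal cover $\mt=\HH^3$ and lift everything: the foliation $\Lambda$ lifts to $\wt\Lambda$, and $\wfbs,\wfbu$ lift the branching foliations, while $\ft$ is a lift of (an iterate of, or a power composed with a deck transformation realizing) $f$. Because $M$ is hyperbolic, it is atoroidal and its fundamental group is Gromov hyperbolic; by Gabai--Mosher / the theory of (branching) foliations without compact leaves in hyperbolic 3-manifolds, the leaves of $\wt\Lambda$ and of the branching foliations are properly embedded planes, and one has a well-behaved action on the boundary sphere $\pin\mt\cong S^2$, with each leaf limiting to a well-defined limit set. I would then exploit the key tension: leaves of $\wt\Lambda$ are simultaneously subfoliated by the lifted strong-stable and strong-unstable foliations $\wt\cW^s,\wt\cW^u$, forcing each such leaf to be a \emph{Gromov-hyperbolic} plane with a product structure whose two one-dimensional foliations are quasigeodesic; but a leaf tangent to $E^s\oplus E^u$ also sits as a leaf of intersection-type objects inside the center-stable and center-unstable branching foliations. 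Comparing the ideal boundary circles of an $su$-leaf with those of the $cs$- and $cu$-leaves through it, and using that $\ft$ (together with deck transformations) acts on $\pin\mt$ with a north--south-type dynamics coming from partial hyperbolicity, I expect to force the existence of a periodic leaf or a periodic point with a too-large center behavior, contradicting hyperbolicity of $M$ (which has no $\ZZ^2$ subgroups and admits no Reebless foliation with such symmetry), or directly contradicting the Anosov-like obstructions: in a hyperbolic 3-manifold there is no partially hyperbolic diffeomorphism whose center direction integrates compatibly with an $su$-foliation, since that would produce an incompressible torus or a Seifert piece.

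The main obstacle I anticipate is controlling the interaction between the \emph{branching} foliations $\fbs,\fbu$ and the genuine foliation $\Lambda$: branching foliations are not true foliations, so one cannot naively intersect leaves, and one must use the approximation of branching foliations by honest foliations (Burago--Ivanov) and track how the $su$-lamination sits with respect to these approximations, all while keeping the coarse-geometric control on $\HH^3$. A second delicate point is promoting "not accessible" to "all accessibility classes non-open": this uses that accessibility classes are either all open or the non-open ones laminate, but one must also rule out the mixed case, which is where the hypothesis that the non-wandering set is everything (rather than merely volume preservation) is used. Once the invariant $su$-foliation is in hand, the rest is a matter of importing the leaf-space and boundary-at-infinity technology from \cite{BFFP} to the hyperbolic case, where the absence of toroidal and Seifert pieces makes the final contradiction cleanest: one shows the dynamics on the circle at infinity of a center-stable leaf must fix the two "$su$-directions," forcing a periodic center leaf that bounds an impossible subsurface or gives an $f$-invariant incompressible torus, which a hyperbolic $M$ does not contain.
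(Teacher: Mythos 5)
Your proposal has a genuine gap at the very first reduction, and then it becomes too vague to count as a proof in the place where the real work happens.

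The first concrete step is wrong as stated: you claim that since the non-wandering set is all of $M$, if $f$ is not accessible then \emph{every} accessibility class is non-open, so the lamination $\Lambda^{su}$ is a foliation of $M$. This is not a standard consequence of non-wandering, and the paper does not rely on it. The Hertz--Hertz--Ures theorem (Theorem~\ref{t.HHU} in the paper) only produces an $f$-invariant lamination $\Lambda^{su}$ tangent to $E^s\oplus E^u$, which may well be proper: the non-wandering hypothesis is used precisely to establish that the completion of each complementary region is an $I$-bundle in which the center bundle is uniquely integrable, and one then \emph{extends} $\Lambda^{su}$ to a foliation $\cF$ that is not tangent to $E^s\oplus E^u$ in the complementary regions and is not $f$-invariant (only invariant under a homotoped map $h$ agreeing with $f$ on $\Lambda^{su}$). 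Much of the paper's technical effort (Lemma~\ref{l.homot}, Corollary~\ref{lami}, Lemma~\ref{minimal}) is devoted to handling exactly this $I$-bundle structure and to passing between $f$, $\Lambda^{su}$, $h$, and $\cF$; your reduction would skip all of it without justification, and if you try to prove your claim you will find that periodic open accessibility classes are consistent with the non-wandering hypothesis.

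After that point, your sketch gestures at the right circle of ideas (universal cover, Gromov hyperbolicity of leaves, ideal boundaries, quasigeodesics, the \cite{BFFP} machinery) but does not lay out the actual structure of the argument, which is where the theorem is really proved. The paper runs a three-way case analysis that you do not mention: (i) some leaf of $\widetilde\Lambda^{su}$ is fixed by the good lift $\ft$, handled by an axis/graph-transform argument on the leaf using that $f$ has no contractible periodic points (Proposition~\ref{p.fixedleaf}); (ii) $\ft$ translates $\widetilde\Lambda^{su}$ and $f$ is a discretized Anosov flow, ruled out by the existence of freely homotopic periodic orbits with opposite orientations and Thurston's slithering estimate (Proposition~\ref{p.transl1}); (iii) the triple translation case, ruled out via regulating pseudo-Anosov flows, the orbit-equivalence Proposition~\ref{conjugate}, Lefschetz index counts, and a $p$-prong versus regular-orbit incompatibility (Propositions~\ref{conjugate2} and \ref{nonsing}). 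Your final sentence about "forcing a periodic center leaf that bounds an impossible subsurface or gives an $f$-invariant incompressible torus" does not correspond to any of these contradictions and there is no indication of how to make it precise. To turn your sketch into a proof you would need, at minimum, to drop the false claim that $\Lambda^{su}$ is a foliation, introduce the $I$-bundle completion and the auxiliary map $h$, and then carry out the dichotomy on the action of $\ft$ on the leaf space.
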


In \cite{HHU-dim3} (see also \cite{CHHU}) it is shown that the
conclusion of Theorem B follows from
the solution of a problem of geometry of foliations and this is what we solve here. 
In the next section we introduce the setting and explain what are the main technical results we need to prove in order to establish the main theorems. 
Some of the intermediary results to prove Theorem B
hold in the more general setting of partially hyperbolic diffeomorphisms homotopic to the identity in 3-manifolds and this is how
they are proved in this paper. 


In \cite{HHU-Ergo,BHHTU} it is shown that accessible partially hyperbolic diffeomorphisms are open and dense among those with one dimensional center (see \cite{DW} for higher dimensional center, but a weaker topology). Here we treat the problem (started in \cite{HHU-dim3}) of determining which manifolds or homotopy classes of partially hyperbolic diffeomorphisms are \emph{always} accessible or ergodic by topological reasons. Other results in this direction have been obtained, see \cite{HHU-dim3,HUres,GanShi,Hammerlindl}. 
In particular, in \cite[Theorem 1.1]{HRHU} it is proved 
that conservative partially hyperbolic diffeomorphisms homotopic to the
identity of Seifert manifolds are ergodic.
This result follows from our results, but the arguments are quite different and in particular, \cite[Theorem 1.1]{HRHU} does not depend on the classification of partially hyperbolic diffeomorphisms homotopic to the identity in Seifert manifolds that we use.

Notice that it makes sense to prove accessibility without any assumptions on the dynamics of $f$ (such as being non-wandering or conservative) as done in \cite{DW,BHHTU} for manifolds of any dimension.
We provide in section \ref{s.nonconservative} some results in dimension $3$
that allow to treat this case too, namely, when $f$ is leaf conjugate to an Anosov flow.
Previously only in \cite[Section 6.3]{HP-Nil} accessibility in the non-conservative setting was considered in relation with
\cite[Conjecture 2.11]{CHHU}. 
We prove:

\begin{teoC}
Let $f: M \to M$ be a partially hyperbolic diffeomorphism in a 3-manifold $M$
which does not have virtually solvable fundamental group, and suppose that 
$f$ is a discretized Anosov flow.
Then, $f$ is accessible.
\end{teoC}

A discretized Anosov flow is a map $f(x) = \phi_{t(x)}(x)$,
where $\phi_t$ is a topological Anosov flow.
By \cite[Theorem A]{BFFP2} (see also \cite[Theorem A]{BFFP}) the previous theorem gives accessibility for partially hyperbolic diffeomorphisms homotopic to the identity in Seifert manifolds. The ideas behind Theorem B also allow to treat certain isotopy classes in Seifert manifolds which are not dealt with by Theorem C. The following theorem is proved in Section \ref{s.pASeif} (and we refer the reader to that section for some of the definitions in the statement as well as a stronger statement). 

\begin{teoD}
Let $M$ be a Seifert 3-manifold with hyperbolic base and $f: M\to M$ be a partially hyperbolic diffeomorphism, such that its non-wandering set 
is all of $M$, and the induced action of $f$ in the base is pseudo-Anosov. Then, $f$ is accessible. 
\end{teoD}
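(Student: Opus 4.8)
The plan is to argue by contradiction: assume $f$ is not accessible, so the accessibility classes form a nontrivial partition of $M$, and by the work of Hertz-Hertz-Ures there is a nonempty closed $f$-invariant union of accessibility classes which is laminated by a (possibly empty-boundary) lamination $\Lambda$ saturated by the center-stable and center-unstable directions; when $f$ is not accessible this lamination can be completed to an $f$-invariant branching foliation $\fbs$ (and $\fbu$) in the sense of Burago-Ivanov, and the non-open accessibility classes are surfaces tangent to $E^{cs}$ and $E^{cu}$ simultaneously, hence tangent to $E^c$ — so they are compact or lift to planes/cylinders/annuli controlled by the center dynamics. The first step, then, is to recall from \cite{HHU-dim3} the dichotomy that non-accessibility produces such $f$-invariant branching foliations $\fbs,\fbu$ tangent to $E^{cs},E^{cu}$, together with the structure of the accessibility classes.

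Next I would use the hypothesis that the base action is pseudo-Anosov to understand the topology of these branching foliations in $M$. Passing to the universal cover $\widetilde M$ (or to the cover $\widehat M$ corresponding to a finite-index subgroup on which the Seifert fibration becomes a genuine circle bundle over a hyperbolic surface), the center direction $E^c$ projects to a direction on the base orbifold that is compatible with the pseudo-Anosov dynamics; the key point is that a pseudo-Anosov map on a hyperbolic surface has no invariant one-dimensional foliation, and more precisely, the induced dynamics on the circle at infinity (or on the space of leaves of the lifted branching foliations) has north-south–type or $2n$-prong behavior that is incompatible with the product/minimal structure that a nontrivial invariant branching foliation tangent to $E^{cs}$ would force. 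Concretely I would analyze the leaf space $\mathcal{L}^{cs}$ of $\wfbs$: it is a simply connected (possibly non-Hausdorff) $1$-manifold on which $\pi_1(M)$ acts, and the Seifert structure together with the pseudo-Anosov monodromy forces a contradiction — either the leaf space is trivial (so $\wfbs$ is a foliation by planes, which by Novikov/Palmeira-type arguments and the presence of the Seifert fiber as a nontrivial central element cannot be invariant under a map whose base action is pseudo-Anosov), or it branches and one extracts a pseudo-Anosov-invariant structure on the surface that does not exist.

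Then I would feed in the non-wandering hypothesis: the center-stable and center-unstable branching foliations, being $f$-invariant and with $NW(f)=M$, must have good recurrence properties (e.g. the corresponding one-dimensional center foliation, where it exists, has recurrent dynamics along center leaves), which rules out the remaining "escaping" configurations and pins down that any non-open accessibility class is a closed surface carrying a pseudo-Anosov map — impossible since a closed surface admitting a pseudo-Anosov self-map has negative Euler characteristic but here it would have to be transverse to the Seifert fiber in a way forcing it to be a torus or Klein bottle, contradiction. The stronger statement referred to in Section \ref{s.pASeif} presumably identifies exactly which branching-foliation configurations can occur and shows each is inconsistent with the pseudo-Anosov base action; I would organize the proof around that case list.

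\medskip

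The main obstacle I expect is the second step: controlling the global topology of the $f$-invariant branching foliations $\fbs,\fbu$ in the Seifert manifold well enough to extract the pseudo-Anosov incompatibility. Branching foliations are genuinely more delicate than foliations — leaves can merge, the leaf space is non-Hausdorff, and one must work with the Burago-Ivanov approximating foliations and their lifts while keeping track of the $\pi_1(M)$-action and the central $\mathbb Z$ coming from the fiber. Reconciling this with the pseudo-Anosov dynamics on the base orbifold (which itself may have cone points, so one really works with orbifold foliations or passes to a manifold cover) is where the real work lies; the techniques of \cite{BFFP} for handling leaf spaces and ideal boundaries in the homotopic-to-identity case are the model, but here the base action is not the identity, so the relevant $\pi_1$-action on the leaf space is twisted by the pseudo-Anosov monodromy and the fixed-point/attractor-repeller analysis must be redone in that setting.
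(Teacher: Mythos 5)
Your proposal misidentifies the basic object coming out of non-accessibility and, more importantly, omits the mechanism that actually produces the contradiction, so there is a genuine gap rather than a different-but-valid route.

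First, the structural point. The HHU-dim3 theorem (Theorem \ref{t.HHU}) produces an $f$-invariant lamination $\Lambda^{su}$ whose leaves are tangent to $E^s \oplus E^u$ — the \emph{strong} stable-plus-unstable plane field — not surfaces ``tangent to $E^{cs}$ and $E^{cu}$ simultaneously.'' (A surface simultaneously tangent to $E^{cs}$ and $E^{cu}$ would have to be tangent to $E^{cs}\cap E^{cu}=E^c$, which is one-dimensional, so that reading cannot be right.) The branching foliations $\fbs,\fbu$ of Burago--Ivanov are separate objects, constructed without any accessibility hypothesis, and they are tangent to $E^{cs}$ and $E^{cu}$ respectively. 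The whole point of the paper's argument is to play the $su$-lamination $\Lambda^{su}$ against the $cs/cu$-branching foliations, and your write-up collapses them into one thing. This is not a cosmetic slip: the argument lives precisely in the tension between these two kinds of invariant surfaces.

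Second, and this is the real gap, you gesture at an ``incompatibility'' between the pseudo-Anosov base action and invariant (branching) foliations, but do not supply the mechanism. The paper's mechanism is a Lefschetz-index / prong-counting argument: one lifts to $\hat M = \widetilde M/\langle c\rangle$ (with $c$ the fiber class), shows that $\Lambda^{su}$, $\fbs$, $\fbu$ are all \emph{horizontal} (this is where $NW(f)=M$ is used, via \cite[Prop.\ 5.1]{HaPS}, together with ruling out vertical cylinder sublaminations via the graph-transform argument), then chooses a lift $\hat f$ corresponding to a $p$-prong singularity ($p\geq 3$) of $\rho(f)$ acting on $\widetilde\Sigma=\mathbb H^2$. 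That lift has $2p\geq 6$ alternating attracting/repelling fixed points at infinity, hence a fixed point of index $1-p\leq -2$ on a fixed leaf of $\wfbs$ (this is the \cite[Prop.\ 8.1]{BFFP}-type step). On the other hand, on a leaf of $\widetilde\Lambda^{su}$, any fixed point of the same map is a genuine hyperbolic saddle for the (strong stable, strong unstable) pair, forcing the index to be exactly $-1$ — i.e.\ a $2$-prong — and the argument of Propositions \ref{conjugate2} and \ref{nonsing} shows these two indices must agree, contradiction. None of this prong-matching is present in your sketch; ``a closed surface admitting a pseudo-Anosov self-map has negative Euler characteristic, but here it would have to be a torus'' is not where the contradiction lives — the accessibility classes are not closed surfaces, and the pseudo-Anosov behavior sits on the base, not on a leaf. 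Without the index/prong comparison between $\Lambda^{su}$ and $\fbs$, the proposal does not close.
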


These results give an affirmative solution to
a conjecture by Hertz-Hertz-Ures \cite[Conjecture 2.11]{CHHU} in hyperbolic 3-manifolds as well as other isotopy classes of diffeomorphisms of some 3-manifolds. See Remark \ref{rem.CDergod} for the ergodic consequences of Theorems C and D.

To perform the proof of Theorem B we need to show a result about quasigeodesic pseudo-Anosov flows in $3$-manifolds 
which may be of independent interest (see Proposition \ref{conjugate}). 

Being partially hyperbolic (via cone-fields) and preserving volume are conditions that are usually given a priori or can be detected by checking only finitely many iterates of $f$, while ergodicity (or mixing) is a chaotic property that involves the asymptotic behaviour of the system, so, this
type of result allows to give precise non-perturbative information of a system by looking at finitely many iterates (c.f. \cite{Potrie-ICM}).

\section{Setting and strategy}\label{s.setting} 

\subsection{Definitions} 

Let $M$ be a closed manifold and $f: M\to M$ a $C^1$-diffeomorphism. One says that $f$ is \emph{partially hyperbolic} if there exists a $Df$-invariant continuous splitting $TM = E^s \oplus E^c \oplus E^u$ into non-trivial bundles such that there is $n>0$ so that for any unit vectors $v^\sigma \in E^{\sigma}(x)$ ($\sigma=s,c,u$) it follows that: 

$$ \|Df^n v^s \| < \min\{1, \|Df^n v^c \|\} \leq \max \{ 1, \|Df^n v^c \|\} < \|Df^n v^u\|. $$

Using an adapted metric, we assume that $n = 1$. We refer the reader to \cite{CP,HP-survey} for basic properties of these diffeomorphisms. 

It is well known \cite{HPS} that the bundles $E^s$ and $E^u$ are uniquely integrable into foliations $\cW^s$, $\cW^u$.

\begin{definition}{(accessibility)}
We say that $f$ is \emph{accessible} if given any two points $x,y\in M$ there exists a piecewise $C^1$ path tangent to $E^s \cup E^u$ from $x$ to $y$. 
More generally two points $z, w$ in $M$ are in the same accessibility class if there is a path as above connecting the points. This is an equivalence 
relation.
\end{definition}

We refer the reader to \cite{CHHU,Wil} for more on the notion
of accessibility.
Notice that the tangent vectors are required to be in $E^s \cup E^u$
and not on $E^s \oplus E^u$.

Recall that a diffeomorphism $f$ is said to be \emph{conservative} if it preserves a volume form. By Poincar\'e recurrence, this implies that such an $f$ has to be \emph{non-wandering} meaning that for every open set $U \subset M$ there exists $n>0$ so that $f^n(U) \cap U$ is not
empty. For all the results of this paper (except to deduce Theorem A from Theorem B) this weaker topological assumption (that $f$ is
non wandering) will be enough. 

We will always work with $M$ being a 3-dimensional manifold. In this setting, a hyperbolic manifold is one which can be obtained as a quotient of $\mathbb{H}^3$ by isometries of the hyperbolic metric.  Contrary to what one might expect, these are well known to be quite abundant thanks to Thurston-Perelman's geometrization theorem.  We emphasize here that many hyperbolic 3-manifolds are known to support partially hyperbolic diffeomorphisms (and even Anosov flows),  see e.g. \cite{Go,FenleyAnosov, FoulonHasselblatt}. 
In fact the Anosov flows analyzed by Foulon and Hasselblatt in 
\cite{FoulonHasselblatt}
are contact, and hence preserve a volume form. 
Notice that Asaoka \cite{As} proved that any transitive Anosov flow
is orbitally equivalent to an Anosov flow preserving a smooth volume
form.
Also any Anosov flow in a hyperbolic $3$-manifold is transitive
\cite{Mosh}.

For a manifold $M$ we will denote by $\mt$ the universal cover, and by $\pi: \mt \to M$ the canonical projection. Whenever an object $X$ is lifted to $\mt$ it will be denoted by $\widetilde X$. 

\subsection{Ergodicity}\label{ss.ergodicity}
A conservative diffeomorphism $f: M\to M$ is \emph{ergodic} if the only $f$-invariant sets in $M$ have zero or total volume. It is \emph{mixing} if it holds that for every measurable subsets $A,B \subset M$ one has that $\lim_n \mathrm{vol}(A \cap f^n(B)) = \mathrm{vol}(A) \mathrm{vol}(B)$ (we are assuming here that we have normalized the volume so that $\mathrm{vol}(M)=1$). 

To prove Theorems A from Theorem B it is enough to apply the following result which is a consequence of the main result of \cite{BurnsWilkinson} (in this setting it also follows from \cite{HHU-Ergo}): 

\begin{theorem}\label{teo.BW}
Let $f: M \to M$ be a conservative partially hyperbolic diffeomorphism of class $C^{1+}$ which is accessible and has center dimension equal to $1$. Then, $f$ is a $K$-system with respect to volume. In particular it is ergodic and mixing. 
\end{theorem}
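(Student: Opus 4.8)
The plan is to deduce this directly from the ergodicity criterion of Burns and Wilkinson \cite{BurnsWilkinson}, whose hypotheses are that $f$ is a $C^{1+}$ volume-preserving partially hyperbolic diffeomorphism which is \emph{center bunched} and \emph{essentially accessible}. Granting that theorem, there are only two things to check — that the center bunching condition holds, and that accessibility (as we are assuming it) implies essential accessibility — followed by a short ergodic-theoretic remark to pass from the Kolmogorov property to ergodicity and mixing.

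First I would verify center bunching. Recall that, in an adapted metric and relative to a Hölder exponent $\theta$ of the invariant bundles, $f$ is center bunched when the contraction along $E^s$ and the expansion along $E^u$ dominate a $\theta$-power of the ratio $\|Df|_{E^c}\|/m(Df|_{E^c})$ between the fastest and slowest infinitesimal rates along the center. When $\dim E^c = 1$ the bundle $E^c$ is a line, so $\|Df_x|_{E^c}\| = m(Df_x|_{E^c})$ for every $x$, and this ratio is identically $1$; choosing an adapted metric as in Section \ref{s.setting} so that $\|Df|_{E^s}\| < 1 < m(Df|_{E^u})$, the center bunching inequalities are then automatic. Thus every $C^{1+}$ partially hyperbolic diffeomorphism with one-dimensional center is center bunched, a fact already recorded in \cite{BurnsWilkinson}.

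Next, essential accessibility is the statement that every measurable union of accessibility classes has zero or full volume. Since by hypothesis $f$ is accessible there is a \emph{single} accessibility class, namely all of $M$, so the condition is vacuous. Hence the Burns--Wilkinson theorem applies and yields that $f$ has the Kolmogorov property with respect to volume. (Alternatively, in this one-dimensional-center setting the same conclusion follows from \cite{HHU-Ergo}.)

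Finally, a $K$-automorphism is mixing of all orders; in particular it is mixing, and mixing implies ergodic. This gives the remaining two assertions and completes the proof. I do not expect any genuine obstacle: the only mildly delicate point is the bookkeeping of the center bunching condition, but a one-dimensional center collapses it to a triviality, and everything else is a citation together with a standard property of $K$-systems.
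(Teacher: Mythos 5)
Your argument is correct and is exactly what the paper has in mind: Theorem~\ref{teo.BW} is stated there as a direct consequence of the main result of \cite{BurnsWilkinson} (with \cite{HHU-Ergo} cited as an alternative in this setting), and you simply spell out the two standard observations needed to invoke it — that one-dimensional center makes center bunching automatic, and that accessibility trivially gives essential accessibility. No gap; this matches the paper's route.
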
 

Notice that accessibility is stable under taking finite iterates, so for proving Theorem A and B we can always pick a finite iterate of our diffeomorphism. Also, we can take finite lifts if desired as having a finite lift which is accessible implies that the original map is accessible.  

We refer the reader to \cite{Manhe} for definitions and implications of being a $K$-system.
We do not define this notion here, as this paper will be entirely
about geometric notions pertaining to accessibility and related 
objects. We remark that it would be very natural  to ask whether
in this context every conservative partially hyperbolic diffeomorphism is Bernoulli and some indications that this might be the case can be found in \cite{AVW}. This question is beyond the scope of this paper. 

We also remark that in the setting of non-wandering diffeomorphisms, it is a classical result by Brin  \cite{Brin} (see also \cite[Section 5]{CP}) that an accessible partially hyperbolic diffeomorphism which is non-wandering has a dense orbit (i.e. it is transitive). 

\begin{remark}\label{rem.CDergod}
These considerations are also valid in the setting of Theorem C and D, so both those results have their implications on the ergodicity of conservative systems in the setting of Theorem C and D.
\end{remark}

\subsection{Accessibility} 

In \cite{HHU-dim3} (see also \cite{CHHU}) the following result is shown based on the results of \cite{HHU-Ergo} in the specific case of dimension 3 (we remark that the results in \cite{HHU-Ergo} have their non-conservative counterparts in \cite{BHHTU}): 

\begin{theorem}\label{t.HHU}
Let $M$ be a closed 3-manifold and $f: M \to M$ a 
partially hyperbolic diffeomorphism so that the non wandering
set of $f$ is all of $M$.
Suppose that $f$ is not accessible. Suppose that 
there are no tori tangent to $E^s \oplus E^u$. Then 
\begin{itemize}
\item
There exists an $f$-invariant non-empty lamination $\Lambda^{su}$ with $C^1$-leaves tangent to $E^s \oplus E^u$, and so that $\Lambda^{su}$ does
not have any compact leaves.
\item
The completion of each complementary region of $\Lambda^{su}$
is an $I$-bundle, so the lamination $\Lambda^{su}$ 
can be extended to a foliation $\cF$ (not necessarily $f$-invariant nor tangent to $E^s \oplus E^u$) without compact leaves.
\item
Finally the center bundle is uniquely integrable in the completion
$W$ of any complementary region, and in $W$ the center foliation
is made up of compact segments from one boundary component
of $W$ to the other.
\end{itemize}
\end{theorem}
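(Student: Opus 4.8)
The plan is to extract all three conclusions from the structure theory of accessibility classes, supplemented by some surface topology and two-dimensional dynamics. Recall from \cite{HHU-Ergo} (whose statements are local in nature, and which has a non-wandering counterpart in \cite{BHHTU}) the basic dichotomy valid in center dimension $1$: every accessibility class is an injectively immersed $C^1$ submanifold; the set $\Gamma$ of points whose accessibility class is not open is closed and $f$-invariant; and $\Gamma$ is partitioned by those classes into a $C^1$ lamination $\Lambda^{su}$ whose leaves are complete surfaces tangent to $E^s\oplus E^u$. If $\Gamma$ were empty, then $M$, being connected and a union of open accessibility classes, would be a single class and $f$ would be accessible, against the hypothesis; so $\Gamma=\Lambda^{su}\neq\emptyset$, and it is $f$-invariant since $f$ permutes accessibility classes. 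This is the first bullet except for the absence of compact leaves, which follows because a compact leaf $L$ carries the line field $E^s|_L$, hence $\chi(L)=0$ and $L$ is a torus or a Klein bottle: the torus case contradicts the hypothesis directly, and the Klein bottle case is likewise excluded since replacing $f$ by an iterate fixing $L$ would make $f|_L$ an Anosov diffeomorphism of a Klein bottle, impossible by Franks--Newhouse.

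Now fix a connected component $U$ of $M\smallsetminus\Lambda^{su}$. Every leaf of $\cW^s$ or $\cW^u$ is either contained in a leaf of $\Lambda^{su}$ (those being $su$-saturated) or disjoint from $\Lambda^{su}$, so the accessibility class of any point of $U$ lies entirely inside $U$ and is open; by connectedness $U$ is thus a single accessibility class, saturated by $\cW^s$ and $\cW^u$, whose metric completion $W$ has its boundary built from leaves of $\Lambda^{su}$. The heart of the proof is to upgrade this into a product picture on $W$. Along a boundary leaf $L$ I would use the locally invariant ``fake'' center foliation (as in \cite{BurnsWilkinson}) together with the absence of any $su$-leaf in the interior of $W$ to show that $E^c$ is uniquely integrable throughout $W$, with integral curves that are honest arcs transverse to $L$; then, sweeping out a maximal collar of $L$ by these center arcs, one checks that the only ways it can fail to exhaust $W$ yield either $W\cong L\times[0,1]$ (the collar closes up on a second boundary leaf) or a twisted $I$-bundle over a non-orientable surface (the collar returns to $L$ with nontrivial holonomy). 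Either way $W$ is an $I$-bundle whose fibers are the compact center segments joining one boundary component of $W$ to the other, which is the third bullet; and foliating each $W$ by the level surfaces of its $I$-bundle structure extends $\Lambda^{su}$ to a foliation $\cF$ of $M$ which, since its leaves in $W$ are copies (or double covers) of the non-compact boundary leaves, has no compact leaf --- the second bullet.

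I expect the main obstacle to be the middle step above: proving that $E^c$ is uniquely integrable inside $W$ and that its integral curves are compact and cross $W$ from one side to the other without recurring. This is precisely where one must use that $U$ is a single open accessibility class --- so that $\cW^s$ and $\cW^u$ spread out across $U$ transversally to the center direction, leaving no room for a center curve to wander off or spiral back into itself --- rather than any assumed coherence of $E^c$. Controlling the collaring near the boundary leaf $L$, which is itself non-compact and may be recurrent under the strong foliations, and ruling out that the collar pinches before reaching the opposite side of $W$, is the delicate point; the compactness of $M$ and the uniform local product structure of $\cW^s$ and $\cW^u$ are the available tools.
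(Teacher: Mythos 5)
The paper does not give its own proof of this statement: the first bullet is cited from \cite{HHU-Ergo}, the two claims in the third bullet from Proposition 4.2 and from the proof of Theorem 4.1 of \cite{HHU-dim3} (the second bullet then follows from the third), and the paper's contribution is just to observe that these arguments, though stated for volume-preserving $f$, only use that the non-wandering set is all of $M$ --- more specifically, that recurrent points are dense. Your proposal attempts a direct argument, so it is doing more work than the paper. Your first paragraph is sound: it reproduces the structure theory of accessibility classes from \cite{HHU-Ergo}, gets that $\Lambda^{su}$ is non-empty and $f$-invariant, and rules out compact leaves (the Klein bottle exclusion via Franks--Newhouse is a clean self-contained alternative to the paper's appeal to \cite{HHU-tori}). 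The observation that each complementary region is a single open accessibility class saturated by $\cW^s$ and $\cW^u$ is also correct.

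The gap is exactly where you flag it, and your list of ``available tools'' is missing the decisive one. Your collar-sweeping sketch appeals to compactness of $M$, fake center foliations, and the uniform local product structure of $\cW^s$ and $\cW^u$, but never to the non-wandering hypothesis. That hypothesis is precisely what the paper singles out as indispensable for the third bullet (``the specific property that is used is that recurrent points are dense''), and it is what excludes the pathologies you worry about: a center arc escaping to an end of $W$ without reaching the opposite boundary, a complementary region with unbounded transverse width or with only one boundary leaf, a collar that pinches before closing up. None of the tools you name, absent recurrence, bound the transverse depth of $W$ or force the center arcs to be compact and to cross $W$ from side to side; the local product structure of the strong foliations gives control inside a fixed-size chart, but says nothing about what happens when the center direction is followed over a long distance. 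As written, then, the third bullet is not established, and the second --- which you correctly note follows from the third --- is consequently left open as well. To close the hole you would need to import the recurrence mechanism of \cite{HHU-dim3}, Proposition 4.2 and the proof of Theorem 4.1, not merely the local structure theory.
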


We remark here that the lamination $\Lambda^{su}$ may cover the whole $M$ in which case it would be a foliation and the second and third items become void. 

\begin{remark}\label{r.HHU2}
In fact, under the same assumptions, the conclusions of Theorem \ref{t.HHU} hold for any closed $f$-invariant lamination tangent to $E^s \oplus E^u$ without tori leaves. This follows directly from the proofs that we briefly summarize in the next paragraph.
\end{remark}

The first item of Theorem \ref{t.HHU} 
is proved in \cite{HHU-Ergo} with the hypothesis that
$f$ is conservative or non-wandering. The proof only uses that $f$ is not accessible,
an explicit proof that one only needs that
is done in \cite{CHHU}.
The first statement of the third item is 
proved in Proposition 4.2 of \cite{HHU-dim3}.
It is stated for $f$ conservative, but the proof only needs
that the non wandering set of $f$ is all of $M$.
In fact the specific property that is used is that
recurrent points are dense.
The second statement of the third item is proved during the
proof of Theorem 4.1 of \cite{HHU-dim3}.
Again, all that
is needed is that the non-wandering set of $f$ is all of $M$.
Finally, 
the second statement in the third item
then implies the second item.


\begin{remark}\label{rem.toriimplysolv} It follows from \cite{HHU-tori} that if there is a torus tangent to $E^s \oplus E^u$ then it must be incompressible and the manifold $M$ has virtually solvable fundamental group. In particular when $M$ is a hyperbolic 3-manifold there are not tori tangent
to $E^s \oplus E^u$. 
\end{remark}

\begin{remark}
For the remainder of the article, unless otherwise stated,
$M$ will have dimension $3$. 
\end{remark}

\subsection{Strategy of the proof} 
The core of the proof of Theorems B, C and D is already present in Theorem B which is also the hardest. This is why we will concentrate mainly in Theorem B and explain at the end of the paper how the proofs adapt to give Theorems C and D.

The proof of Theorem B (which implies Theorem A) is based on Theorem \ref{t.HHU} which reduces the study to the case where there is an $f$-invariant lamination (which can be completed into a foliation without compact leaves) whose leaves are saturated by stable and unstable manifolds. 
In other words we want to show that this is not possible 
for $M$ hyperbolic.

When the manifold is hyperbolic, up to taking an iterate one can assume that $f$ is homotopic to the identity (c.f. subsection \ref{ss.Mostow}).
 Therefore it admits what is called
a \emph{good lift} $\ft$ at a bounded distance from the identity and commuting with deck transformations (again see subsection \ref{ss.Mostow}). 
We stress that 
a lot of the analysis will be done in the more general 
setting of homeomorphisms homotopic to
the identity.

The study of  \emph{taut} foliations invariant under a diffeomorphism $f$
homotopic to the identity
in general 3-manifolds was started in \cite{BFFP,BFFP2,BFFPann}; 
 where a general 
dichotomy was obtained (see subsection \ref{ss.clasifres}): 
Let $\ft$ be a good lift. Then either 

\begin{itemize}
\item there is a non empty, leaf saturated,
closed set in $M$, \ and
 whose lift to $\mt$ is leafwise fixed by $\ft$; 

\item
or, the foliation is $\mathbb{R}$-covered and uniform, and $\ft$ acts as a translation on the leaf space of the foliation lifted to $\mt$.
\end{itemize}

\noindent
This works for any taut foliation and not just those
associated with a partially hyperbolic diffeomorphism.

Then one 
applies the dichotomy above to the lamination $\Lambda^{su}$
obtained from Theorem \ref{t.HHU}.
One analyzes each case of the dichotomy separately to get a contradiction
to non accessibility of $f$ partially hyperbolic in
a hyperbolic $3$-manifold. 
A contractible fixed point is a fixed point of $\ft^k$ with $k$ not 
$0$ and $\ft$ a good lift.
The case 
where some of the leaves are fixed by $\ft$
requires a general result of \cite{BFFP2} stating that $f$ cannot have contractible periodic points, and this 
then allows to perform arguments similar to those dealing with the \emph{doubly invariant case} of \cite{BFFP,BFFP2} which we do in section \ref{s.FL}. 
This case can be dealt with for general partially hyperbolic
diffeomorphisms of $3$-manifolds that are homotopic to the
identity. 

The full translation case (dealt with in section \ref{s.TR}) is specific to hyperbolic manifolds as it uses the existence of a regulating pseudo-Anosov flow (see subsection \ref{ss.reg}) to show that the partially hyperbolic diffeomorphism needs to be leaf conjugate to a topological Anosov flow. The ideas to show this are similar to the ones appearing in the classification of dynamically coherent partially hyperbolic diffeomorphisms in hyperbolic 3-manifolds (this is done in subsection \ref{ss.Triple}). Then one uses some well known properties of topological Anosov flows (see subsection \ref{ss.disAnosov}) and some properties of regulating pseudo-Anosov flows (in particular, we mention Proposition \ref{conjugate} which may be of independent interest) to get a contradiction (cf. subsection \ref{ss.DAF}).
One much easier subcase of the translation case also works for
general partially hyperbolic diffeomorphisms homotopic to the identity when $\pi_1(M)$ is not virtually solvable (cf. Remark \ref{rem.toriimplysolv}).

\section{Preliminaries and reductions} 
\subsection{Mostow rigidity and good lifts}\label{ss.Mostow}
Let $f: M \to M$ be a homeomorphism of a closed hyperbolic 3-manifold. 
Mostow rigidity \cite{Mostow}, implies that any homeomorphism of a closed hyperbolic manifold (in fact in any dimension $\geq 3$)
is homotopic to an isometry. In any closed manifold an isometry has an iterate close to the identity. This implies that in a hyperbolic manifold, every homeomorphism has an iterate homotopic to the identity (see \cite[Appendix A]{BFFP}).

\begin{definition}{(good lift)}
Let $f$ be a homeomorphism of a closed manifold.
We say that a lift $\ft: \mt \to \mt$ is a \emph{good lift} of $f$,
 if $\ft$ is at bounded distance from the identity (i.e. there exists $K>0$ so that $d(x,\ft(x)) < K$ for all $x \in \mt$), and $\ft$ 
commutes with all deck transformations. 
\end{definition}

\begin{remark}
In fact it is easy to prove that the first condition is
implied by the second one.
We state the definition of good lifts this way, to emphasize
that both properties are used throughout the analysis.
\end{remark}

\begin{lemma}
If $f: M \to M$ is homotopic to identity, then
a lift to $\mt$ of a homotopy from the identity to
$f$ in $M$ provides a good lift of $f$.
\end{lemma}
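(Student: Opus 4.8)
The plan is to build the good lift directly out of the homotopy via the homotopy lifting property of the covering $\pi:\mt\to M$, and then to verify the two conditions in the definition of a good lift. I expect no serious obstacle: the only points needing a little care are invoking the lifting property in the relative form (so that a single choice at time $0$ pins down the lift globally) and the uniqueness-of-lifts step, which is exactly what forces equivariance; boundedness of the displacement will then be a soft compactness argument, in line with the remark that the second condition implies the first.

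First I would fix a homotopy $H:M\times[0,1]\to M$ with $H_0=\mathrm{id}_M$ and $H_1=f$, and consider $G:=H\circ(\pi\times\mathrm{id}_{[0,1]}):\mt\times[0,1]\to M$. Since $\pi$ is a covering map it has the homotopy lifting property, and $G_0=\pi$ has $\mathrm{id}_{\mt}$ as a lift; hence $G$ lifts to a continuous $\widetilde H:\mt\times[0,1]\to\mt$ with $\pi\circ\widetilde H=G$ and $\widetilde H_0=\mathrm{id}_{\mt}$. (A general lift of the homotopy restricts at time $0$ to a deck transformation and differs from this one by composition with that deck transformation; normalising to the identity is harmless.) Moreover this lift is unique: two lifts of $G$ that agree on $\mt\times\{0\}$ agree on the connected space $\mt\times[0,1]$. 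Setting $\ft:=\widetilde H_1$ we get $\pi\circ\ft=G_1=f\circ\pi$, so $\ft$ is a lift of $f$.

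Next I would show $\ft$ commutes with every deck transformation $\gamma$. The two maps $(x,t)\mapsto\widetilde H(x,t)$ and $(x,t)\mapsto\gamma^{-1}\widetilde H(\gamma x,t)$ are both lifts of $G$ through $\pi$ — here one uses that $G$ factors through $\pi\times\mathrm{id}$ and hence is invariant under $\gamma$ acting on the first coordinate — and they coincide on $\mt\times\{0\}$, where both equal $\mathrm{id}_{\mt}$. By the uniqueness just noted they are equal on all of $\mt\times[0,1]$; evaluating at $t=1$ gives $\gamma^{-1}\ft(\gamma x)=\ft(x)$, i.e. $\ft\circ\gamma=\gamma\circ\ft$.

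Finally, bounded displacement follows from equivariance. The function $x\mapsto d_{\mt}(x,\ft(x))$ is continuous on $\mt$, and since deck transformations act by isometries and $\ft$ commutes with them it satisfies $d_{\mt}(\gamma x,\ft(\gamma x))=d_{\mt}(\gamma x,\gamma\ft(x))=d_{\mt}(x,\ft(x))$ for every $\gamma\in\pi_1(M)$; hence it descends to a continuous function on the compact manifold $M$, which is therefore bounded by some $K>0$. Thus $d_{\mt}(x,\ft(x))\le K$ for all $x\in\mt$, and $\ft$ is a good lift of $f$.
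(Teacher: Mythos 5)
Your proof is correct and is the standard argument; the paper does not spell out a proof of this lemma, treating it as a routine application of the homotopy lifting property and uniqueness of lifts (with boundedness then following from equivariance plus compactness, exactly as in the remark preceding the lemma). The only point worth flagging is your implicit normalization $\widetilde H_0=\mathrm{id}_{\mt}$, which you correctly note is the intended reading of "a lift of a homotopy from the identity"; without it the endpoint lift need not commute with deck transformations unless the initial deck transformation is central.
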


In conclusion: 

\begin{proposition}\label{p.mostow}
Let $f$ be a diffeomorphism of a closed hyperbolic 3-manifold $M$. Then, there is an iterate of $f$ which admits a good lift to $\mt$. 
\end{proposition}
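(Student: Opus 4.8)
Proposition~\ref{p.mostow} is essentially an assembly of the three facts recorded immediately above it, so the plan is to chain them together. First I would invoke Mostow rigidity: since $M$ is a closed hyperbolic $3$-manifold, the diffeomorphism $f$ is homotopic to an isometry $g$ of $M$. Second, the isometry group of a closed Riemannian manifold is compact, so $g$ lies in a compact Lie group; hence some power $g^k$ lies in an arbitrarily small neighborhood of the identity in $\mathrm{Isom}(M)$, and in particular $g^k$ is homotopic to $\mathrm{id}_M$ (a sufficiently small isometry is isotopic to the identity through isometries, or one simply notes that a neighborhood of the identity in a compact connected Lie group, or of the identity component, deformation retracts to it). Therefore $f^k$ is homotopic to $g^k$, which is homotopic to the identity, so $f^k$ is homotopic to the identity; this is exactly the statement of \cite[Appendix~A]{BFFP} cited before the proposition.

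Third, with $n = f^k$ homotopic to the identity, I would apply the Lemma stated just above: a homotopy $H\colon M\times[0,1]\to M$ from $\mathrm{id}_M$ to $n$ lifts to a homotopy $\widetilde H\colon \mt\times[0,1]\to\mt$ starting at $\mathrm{id}_{\mt}$ (choosing the lift of $H_0=\mathrm{id}$ to be $\mathrm{id}_{\mt}$), and $\widetilde n := \widetilde H_1$ is then a lift of $n$. One checks that $\widetilde n$ is a good lift: the tracks $t\mapsto \widetilde H(x,t)$ project into $M$ and hence have length bounded by the (finite) maximal length of a track of $H$, so $d(x,\widetilde n(x))$ is uniformly bounded; and since $\widetilde H$ is a lift of a homotopy through maps of $M$, each $\widetilde H_t$ commutes with all deck transformations, in particular $\widetilde n$ does. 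Thus $\widetilde n$ is a good lift of the iterate $f^k$, which is the assertion.

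There is no real obstacle here, since every ingredient is either quoted from the excerpt or is standard; the only point requiring a sentence of care is the passage "isometry close to the identity $\Rightarrow$ homotopic to the identity," which I would handle by the compact-Lie-group remark above, or alternatively just cite \cite[Appendix~A]{BFFP} wholesale for the statement "every homeomorphism of a closed hyperbolic manifold has an iterate homotopic to the identity," after which only the Lemma remains to be applied. In the write-up I would keep it to a few lines: state "by Mostow rigidity and the discussion above, $f^k$ is homotopic to the identity for some $k\geq 1$; now apply the preceding Lemma to $f^k$."
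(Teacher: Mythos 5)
Your proof is correct and follows exactly the chain of reasoning the paper lays out in the two paragraphs preceding the proposition (Mostow rigidity gives a homotopy to an isometry, compactness of the isometry group gives an iterate homotopic to the identity, and the preceding Lemma then produces the good lift). The paper does not write out a separate proof precisely because, as you observe, the proposition is just the assembly of these facts.
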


Notice that in hyperbolic $M$ the good lift is unique (but we will not use this fact). 

\subsection{Foliations without compact leaves}\label{ss.reg}  

A foliation $\cF$ on a closed 3-manifold $M$ will mean a continuous 
two dimensional foliation with leaves of class $C^1$ and tangent to a continuous distribution (foliations of class $C^{0,1+}$ according to \cite{CandelConlon}). We will work with foliations without compact leaves. In particular thanks to results of Novikov, Palmeira and others (see \cite{Calegari,CandelConlon}) it follows that the fundamental group of each leaf injects in $\pi_1(M)$ and therefore every leaf lifts to a plane in the universal cover $\mt$ which necessarily is diffeomorphic to $\RR^3$. We denote by $\widetilde \cF$ the foliation lifted to $\mt$.

For such a foliation, there cannot be a nullhomotopic closed
curve transverse to the foliation, and this implies that the leaf space 

$$\cL_{\widetilde \cF} \ \ := \ \ \mt /_{\widetilde \cF}$$ 

\noindent
is a one-dimensional simply connected manifold (possibly non-Hausdorff). If the leaf space is Hausdorff, then it is homeomorphic to $\RR$ and in this case we say the foliation $\cF$ is $\RR$-\emph{covered}. 

We also consider the following geometric condition: suppose that given any two leaves $L, L' \in \widetilde \cF$ the Hausdorff distance between the leaves is bounded (by a bound that obviously
depends on $L$ and $L'$). In this case we say that the foliation is \emph{uniform} \cite{Th2}. We notice that the distance one needs to consider in $\mt$ is relevant as this is a geometric condition, but since $M$ is compact, any distance which is $\pi_1(M)$-equivariant will work (in particular, when one works with $M$ hyperbolic, one can consider the standard hyperbolic metric on $\mt = \HH^3$). 

If $\cF$ is a (transversally orientable) foliation on a closed 3-manifold $M$ and $\Phi_t : M \to M$ is a flow on $M$, we say that $\Phi$ is \emph{regulating} for $\cF$ if the orbits of $\Phi$ 
are transverse to $\cF$ and when lifted to the universal cover, it holds that for every $x \in \mt$ and 
$L \in \widetilde \cF$ it follows that there exists $t \in \RR$ such that 
$\widetilde \Phi_t(x) \in L$. (Notice that under the assumption that $\cF$ has no closed 
leaves, this implies that the orbit of $x$ cannot intersect $L$ more than once
by Novikov's theorem \cite{CandelConlon}. Here $\widetilde \Phi$ is
the flow $\Phi$ lifted to $\mt$.)

If $\cF$ admits a regulating flow, it follows that $\cF$ is $\RR$-covered. In hyperbolic manifolds, one has the following very interesting strong converse to this proved  by Thurston \cite{Th2} (see also \cite{CalegariPA, Fen2002}): 

\begin{theorem}[\cite{Th2,CalegariPA,Fen2002}] \label{teo.regul}
Given an $\RR$-covered uniform (transversally orientable) foliation 
$\cF$ on a closed hyperbolic manifold $M$, there exists a regulating pseudo-Anosov flow $\Phi_\cF$ to $\cF$. 
\end{theorem}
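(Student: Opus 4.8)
Since this result is due to Thurston, Calegari and Fenley, the plan is to recall the structure of its proof. First I would uniformize the leaves: as $M$ is hyperbolic it is atoroidal, so by Candel's uniformization theorem (see \cite{CandelConlon}) one may choose a metric on $M$ in which every leaf of $\cF$, with its induced path metric, is isometric to the hyperbolic plane; in particular each leaf $L$ of $\widetilde\cF$ acquires an ideal boundary circle $\pin L \cong S^1$ depending continuously on $L$. Since $\cF$ is $\RR$-covered, the leaf space $\cL_{\widetilde\cF} \cong \RR$ carries a faithful orientation preserving action of $\pi_1(M)$.

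Next I would exploit uniformity to build comparison maps between the ideal circles. Bounded Hausdorff distance between two leaves $L, L'$ of $\widetilde\cF$ gives a bounded ``nearest point'' correspondence that extends continuously to a homeomorphism $\pin L \to \pin L'$; checking that these maps are $\pi_1(M)$-equivariant and compatible under composition, one glues all the circles $\pin L$ into a single \emph{universal circle} $\cC$, equipped with a faithful orientation preserving $\pi_1(M)$-action and, for each leaf $L$, an equivariant homeomorphism $\cC \to \pin L$. Equivalently one obtains a cylinder at infinity $\cL_{\widetilde\cF} \times \cC$ fiberwise compactifying $\widetilde\cF$ and carrying the diagonal $\pi_1(M)$-action.

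Then I would extract the transverse dynamics. Moving monotonically through $\cL_{\widetilde\cF}$, the comparison maps are in general not rigid rotations, and the directions along which distinct leaves are asymptotically pinched together, respectively pulled apart, define two $\pi_1(M)$-invariant laminations $\Lambda^s$ and $\Lambda^u$ of $\cC$. The main obstacle — and the heart of the argument — is to show that when $M$ is hyperbolic these laminations are non-empty, mutually transverse, and filling, ruling out the degenerate ``rigid slithering'' with no transverse dynamics; this is precisely the step where the hyperbolicity of $M$ (in particular that $\pi_1(M)$ is far from virtually solvable) is essential. Granting this, I would apply the standard reconstruction of a topological pseudo-Anosov flow from a pair of transverse filling invariant laminations of $S^1$: collapsing the complementary regions of $\Lambda^s \cup \Lambda^u$ yields a simply connected orbit space endowed with two transverse singular one-dimensional foliations of the admissible pseudo-Anosov singularity types, and this data integrates to a (topological) pseudo-Anosov flow $\Phi_\cF$ on $M$ whose weak stable and unstable foliations induce $\Lambda^s$ and $\Lambda^u$.

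Finally, regulating-ness is automatic from the construction: the lifted flow $\widetilde\Phi_\cF$ moves strictly monotonically through $\cL_{\widetilde\cF} \cong \RR$, and by uniformity each orbit is bi-infinite and therefore meets every leaf of $\widetilde\cF$ — exactly once, by Novikov's theorem, since $\cF$ has no compact leaves. This produces the desired transverse regulating pseudo-Anosov flow; I refer to \cite{Th2, CalegariPA, Fen2002} for the full details.
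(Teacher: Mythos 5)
The paper does not prove Theorem~\ref{teo.regul}: it is imported as a black box from \cite{Th2,CalegariPA,Fen2002} and the result is used without a proof being given. Your sketch is a reasonable outline of the Thurston--Calegari--Fenley argument those references carry out: Candel uniformization makes every leaf of $\widetilde\cF$ a hyperbolic plane with ideal circle varying continuously; uniformity yields a coarse quasi-isometry between any two leaves of $\widetilde\cF$, whose boundary extensions fit together equivariantly into a universal circle $\cC$ with a $\pi_1(M)$-action; the failure of the comparison maps to be uniformly quasisymmetric as one goes to $\pm\infty$ in the leaf space produces a pair of $\pi_1(M)$-invariant laminations of $\cC$, with hyperbolicity of $M$ excluding the uniformly quasisymmetric alternative (which would give a $\pi_1(M)$-invariant conformal or measured structure on $\cC$ and force $M$ to be Seifert or small); and those laminations are then used to manufacture a pseudo-Anosov flow transverse to $\cF$. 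One step in your write-up is too loose, though it does not affect the overall validity of the outline: a flow transverse to $\widetilde\cF$ whose orbits are bi-infinite and move monotonically through $\cL_{\widetilde\cF}\cong\RR$ need \emph{not} cross every leaf, since the leaf-space image of an orbit could be a bounded open interval, even when $\cF$ is uniform. The regulating property is not deduced from bi-infiniteness but is engineered directly into the construction: each orbit of the lifted flow is, by construction, a global section of the projection $\mt\to\cL_{\widetilde\cF}$ (and uniqueness of the intersection with each leaf then follows as you say, from the absence of Reeb components). If you wanted to recover regulating after the fact rather than by fiat, you would need the quasigeodesic property of the resulting flow together with uniformity, not bi-infiniteness alone.
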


\begin{figure}[ht]
\begin{center}
\includegraphics[scale=0.70]{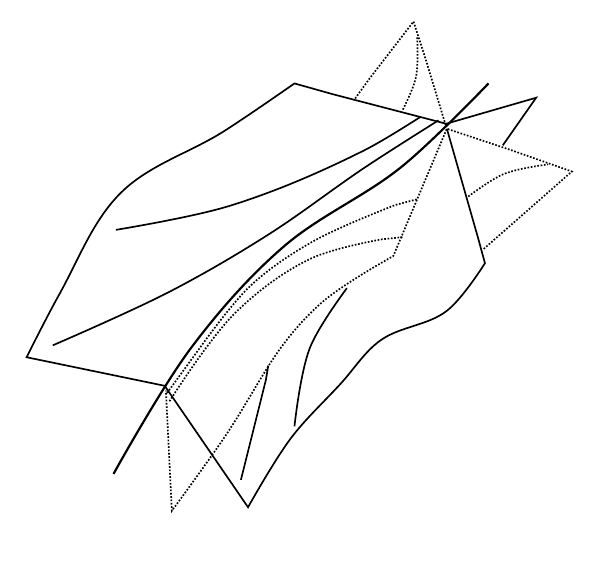}
\begin{picture}(0,0)
\end{picture}
\end{center}
\vspace{-0.5cm}
\caption{{\small The local figure close to a $p$-prong with $p=3$.}\label{f.pAnosov}}
\end{figure}

We recall that a flow $\Phi: M \to M$ with $C^1$ flow lines is said to be \emph{pseudo-Anosov} 
if  there are 2-dimensional, possibly singular, foliations,
$\cG^s$ (stable), $\cG^u$ (unstable) which are flow invariant which verify that:
\begin{itemize}
\item
In a leaf of $\cG^s$ all orbits are forward asymptotic,
and backwards orbits diverge from each other; the analogous
opposite statement for leaves of $\cG^u$;
\item
Singularities,  if any, are of $p$-prong type, with $p \geq 3$.
This means that they are periodic orbits, finitely many, and
locally the stable leaf is a $p$-prong in the plane times
an interval (see figure \ref{f.pAnosov}).
\end{itemize} 

\subsection{Discretized Anosov flows}\label{ss.disAnosov}

A diffeomorphism $g:M \to M$ is said to be a \emph{discretized Anosov flow} if there exists a $g$-invariant $1$-dimensional continuous foliation $\cF^c$ which supports a topological Anosov flow $\{\Phi_t : M\to M\}$
(that is, $\Phi_t(x)$ is always in the same $\cF^c$ leaf
as $x$ for any $x$ in $M$ and any real $t$);
 and such that there exists a continuous function $\tau: M \to \RR_{>0}$ so that $g(x) = \Phi_{\tau(x)}(x)$.

Recall that a \emph{topological Anosov flow} is a pseudo-Anosov flow which has no singular periodic orbits. 

 \begin{remark} A partially hyperbolic diffeomorphism which is a discretized Anosov flow will be necessarily dynamically coherent and the center leaves will correspond to the orbits of the flow (see \cite[Appendix G]{BFFP}).
 \end{remark}

We will use the following result from topological Anosov flows (cf. \cite[Proposition D.4]{BFFP}): 

\begin{theorem}[\cite{FenleyTAMS, Fenley-CMH}]\label{teo.AnosovFlows}
Let $\Phi$ be a topological Anosov flow on a 3-manifold $M$ which is regulating for a uniform $\RR$-covered foliation $\cF$ in $M$. Then $M$ has 
virtually solvable fundamental group, and $\Phi_t$ is orbit equivalent to a suspension flow of a linear automorphism of $\TT^2$. 
\end{theorem}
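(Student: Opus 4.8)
The plan is to work in the universal cover, extract a product structure from the regulating hypothesis, and then let the geometry of the foliation constrain the combinatorics of the Anosov orbit space. Passing to a finite cover affects neither hypothesis nor conclusion, so assume $\cF$ and $\Phi$ transversely orientable and lift everything to $\mt\cong\RR^3$. Since $\cF$ is $\RR$-covered, $\cL_{\widetilde\cF}\cong\RR$; since $\Phi$ is a topological Anosov flow, the orbit space $\oo:=\mt/\widetilde\Phi$ is homeomorphic to $\RR^2$ and carries transverse non-singular $1$-dimensional foliations $\oo^s,\oo^u$ (the images of the weak foliations $\widetilde\cG^s,\widetilde\cG^u$), everything $\pi_1(M)$-invariant. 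By the regulating hypothesis and Novikov's theorem, each orbit of $\widetilde\Phi$ meets each leaf of $\widetilde\cF$ in exactly one point, so $\Psi:\oo\times\cL_{\widetilde\cF}\to\mt$, $(o,L)\mapsto o\cap L$, is a continuous bijection between two copies of $\RR^3$, hence by invariance of domain a homeomorphism. Under $\Psi$ the orbits of $\widetilde\Phi$ are the fibers $\{o\}\times\cL_{\widetilde\cF}$, the leaves of $\widetilde\cF$ are the slices $\oo\times\{L\}$, and, $\widetilde\cG^s$ being orbit-saturated, the leaves of $\widetilde\cG^s$ are the products $\ell\times\cL_{\widetilde\cF}$ with $\ell\in\oo^s$. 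In particular the leaf space of $\widetilde\cG^s$ is $\oo/\oo^s$ (similarly for $\widetilde\cG^u$), so $\Phi$ is $\RR$-covered iff $\cG^s$ (equivalently $\cG^u$) is an $\RR$-covered foliation of $M$; and $\Phi$ is orbit equivalent to a suspension Anosov flow iff, in addition, every leaf of $\oo^s$ meets every leaf of $\oo^u$.

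The next step is to deduce both properties from uniformity of $\cF$. Uniformity bounds the Hausdorff distance between any two slices $\oo\times\{L\}$, $\oo\times\{L'\}$, and, since $M$ is compact, $\Phi$ is uniformly transverse to $\cF$, so the flow time needed to cross from one leaf of $\widetilde\cF$ to a neighbouring one is uniformly bounded. First I would rule out branching of $\oo^s$: if $\oo^s$ had two non-separated leaves $\ell_1\neq\ell_2$, then inside a fixed slice they produce two non-separated properly embedded lines, and pushing this configuration along the flow (which on each orbit reparametrizes $\cL_{\widetilde\cF}$) together with the contraction/expansion of the Anosov flow yields leaves of $\widetilde\cF$ at unbounded Hausdorff distance, contradicting uniformity; so $\Phi$ is an $\RR$-covered Anosov flow. (Alternatively, by Theorem~\ref{teo.regul} the uniform $\RR$-covered foliation $\cF$ admits a regulating pseudo-Anosov flow $\Phi_\cF$, and by uniqueness of regulating pseudo-Anosov flows up to orbit equivalence, Proposition~\ref{conjugate}, one has $\Phi$ orbit equivalent to $\Phi_\cF$ and can argue with $\Phi_\cF$ directly.)

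By the classification of $\RR$-covered Anosov flows (see \cite{FenleyTAMS,Fenley-CMH}), $\Phi$ is then either orbit equivalent to a suspension of a hyperbolic automorphism of $\TT^2$, in which case $\pi_1(M)\cong\ZZ^2\rtimes\ZZ$ is virtually solvable and we are done, or $\Phi$ is \emph{skewed}, meaning the product map $\oo\to(\oo/\oo^s)\times(\oo/\oo^u)$ is a homeomorphism onto a properly embedded ``diagonal strip'' rather than onto $\RR^2$. To exclude the skewed case one uses that then there exist a stable leaf $\ell^s$ and an unstable leaf $\ell^u$ of $\oo$ with $\ell^s\cap\ell^u=\emptyset$, and the skew structure forces, along each orbit, a definite shearing of the stable and unstable directions towards the two degenerate ideal points of the strip. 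Transporting this into a slice $\oo\times\{L\}$ via $\Psi$ and flowing, the shearing again drives leaves of $\widetilde\cF$ apart without bound, contradicting uniformity; hence $\Phi$ is a suspension, $M$ is a torus bundle over $S^1$ with hyperbolic monodromy, $\pi_1(M)$ is virtually solvable, and $\Phi$ is orbit equivalent to the suspension of a linear automorphism of $\TT^2$.

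The hard part is this last implication: converting the metric statement ``$\cF$ is uniform'' into the combinatorial rigidity of the orbit space $\oo$ (no branching of $\oo^s$ or $\oo^u$ and every stable leaf meeting every unstable leaf). This requires a careful study of the diagonal $\pi_1(M)$-action on the product $\oo\times\cL_{\widetilde\cF}$, in particular controlling how deck transformations that translate far in the $\oo$-direction act on $\cL_{\widetilde\cF}$, and is exactly the kind of analysis carried out in \cite{FenleyTAMS,Fenley-CMH,Fen2002}; one should also check carefully that $\Psi$ is a genuine homeomorphism and that the reduction to the transversely orientable case is harmless.
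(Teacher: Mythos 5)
The paper does not prove this theorem; it imports it from \cite{FenleyTAMS,Fenley-CMH}, but an essentially self-contained version of the argument appears in the proof of Proposition~\ref{p.transl1}. That argument runs as follows: by \cite[Corollary~E]{Fen1998}, a topological Anosov flow that is not orbit equivalent to a suspension has two periodic orbits $\eta_1,\eta_2$ with $\eta_1$ freely homotopic to $\eta_2^{-1}$. Lift coherently; the associated deck transformation $\gamma$ translates $\widetilde\eta_1$ in the positive flow direction and $\widetilde\eta_2$ in the negative one, so (because $\Phi$ is regulating and transverse) $\gamma$ pushes the $\widetilde\cF$-leaf through $\widetilde\eta_1$ up and the one through $\widetilde\eta_2$ down in $\cL_{\widetilde\cF}$. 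Since $\cF$ is uniform and $\RR$-covered it slithers over $S^1$ \cite{Th2}, so distance in $\cL_{\widetilde\cF}$ controls distance in $\mt$; iterating $\gamma$ therefore forces the two lifts arbitrarily far apart, contradicting that $\widetilde\eta_1,\widetilde\eta_2$ stay a bounded Hausdorff distance apart. Hence $\Phi$ is a suspension and $\pi_1(M)$ is virtually solvable. This dichotomy from \cite{Fen1998} handles the ``branching'' and ``skewed'' possibilities simultaneously, which is what makes the proof short.

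Your proposal instead tries to run the trichotomy (branching, skewed, suspension) directly on the orbit space, and the two exclusion steps are exactly where the argument has gaps. First, the claim that a pair of non-separated leaves of $\oo^s$ forces leaves of $\widetilde\cF$ unbounded Hausdorff distance apart is asserted but not argued; branching is a phenomenon in the orbit space, transverse to the leaf-space direction $\cL_{\widetilde\cF}$, and it does not obviously propagate into a failure of uniformity. What branching (or skewness) actually produces, via \cite{Fen1998}, is a pair of freely homotopic periodic orbits with opposite flow orientations, and that is the lever that hits uniformity. Second, your proposed ``alternative'' via Theorem~\ref{teo.regul} and Proposition~\ref{conjugate} is not available: both of those results presuppose $M$ hyperbolic, while the conclusion being proved is precisely that $M$ is \emph{not} hyperbolic (its fundamental group is virtually solvable). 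Using them would be circular. To repair the proposal, replace the two hand-waved exclusion steps with the single free-homotopy argument above; the product-structure setup $\Psi\colon\oo\times\cL_{\widetilde\cF}\to\mt$ that you establish at the start is correct and is a reasonable way to see why $\gamma$ must act monotonically on $\cL_{\widetilde\cF}$ once one knows it translates a flow line in a definite direction.
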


It is interesting to compare the case of suspensions (which yield solvable fundamental group of the 3-manifold) with the thorough study of accessibility classes in such manifolds performed in \cite{Hammerlindl}.

\subsection{Dichotomies for foliations and laminations}\label{ss.clasifres}
Here we collect some of the results that are needed from \cite[Section 3]{BFFP} as well as some properties that we will use. 

We will use the following result: 

\begin{proposition}[Dichotomy for lifted foliations \cite{BFFP}]
\label{prop.dich}
Let $h: M \to M$ be a homeomorphism of a $3$-manifold  which is
homotopic to the identity.
Let $\cF$ be a foliation without compact leaves in $M$ preserved by $h$. 
Let $\widetilde h$ be a good lift of $h$.
Suppose that there is a leaf $L$ of $\widetilde \cF$ such that $\widetilde h(L) \neq L$.
Then there are two possibilities:
\begin{enumerate}
\item The foliation $\cF$ is $\RR$-covered and uniform and $\widetilde h$ acts as a translation on the leaf space of $\widetilde \cF$. 
\item The leaves $L$ and $\widetilde h (L)$ bound a region
in $\mt$ with closure $U$, and the foliation
$\widetilde \cF$ has leaf space homeomorphic to a closed
interval in $U$. In addition if
$V= \bigcup_{n\in \ZZ} \widetilde h^n(U)$ then each leaf of $\partial V$ is fixed by $\widetilde h$ and the set $V$ is precisely invariant (meaning that if $\gamma \in \pi_1(M)$ verifies that $\gamma V \cap V \neq \emptyset$ then $\gamma V = V$).  
\end{enumerate}
\end{proposition}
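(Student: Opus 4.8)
The overall strategy is to work in the leaf space $\cL_{\wt\cF}$, which is a simply connected (possibly non-Hausdorff) $1$-manifold, and to analyze the dynamics of the homeomorphism $\bar h$ induced on $\cL_{\wt\cF}$ by the good lift $\wt h$. The hypothesis says $\bar h$ has a point (the class of $L$) that is not fixed. First I would record the basic constraints: since $\wt h$ commutes with deck transformations, $\bar h$ commutes with the $\pi_1(M)$-action on $\cL_{\wt\cF}$; and since $\wt h$ is at bounded distance from the identity, $\wt h$ cannot move a leaf ``too far'' — in particular it cannot jump across infinitely many leaves whose union has infinite diameter, which is the mechanism that forces, in the relevant case, the region between $L$ and $\wt h(L)$ to have Hausdorff leaf space. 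The dichotomy will come from asking whether the orbit $\{\bar h^n(L)\}_{n\in\ZZ}$ is a \emph{properly embedded} copy of $\ZZ$ marching off to both ends of a line in $\cL_{\wt\cF}$ (the translation case), or whether it is blocked.

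For the main case division I would proceed as follows. Consider the leaf $L$ and its image $\wt h(L)$; because the good lift is close to the identity and $\cF$ has no compact leaves (so leaves are properly embedded planes separating $\mt$), $L$ and $\wt h(L)$ are disjoint or equal — equal is excluded by hypothesis — so they are disjoint and cobound a closed region $U$ in $\mt$. The key trichotomy inside $U$: either (a) the set of leaves of $\wt\cF$ meeting $U$ forms a genuine closed interval (Hausdorff) in $\cL_{\wt\cF}$, in which case one is in alternative (2); or (b) there are branching (non-separated) leaves inside $U$, and one must show this propagates. In case (b) I would argue that non-separatedness in $U$, combined with invariance of $\cF$ under $h$ and the bounded-distance property of $\wt h$, forces the branching to be ``global'' and in fact forces $\cF$ to be $\RR$-covered and uniform with $\wt h$ translating — this is where I would invoke the structural results of \cite[Section 3]{BFFP} on the leaf space and on uniformity. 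Concretely, if the leaf space restricted to $U$ is an honest interval, then $\wt h$ pushes this interval forward in a monotone way; iterating, $V=\bigcup_n \wt h^n(U)$ is an increasing union of intervals, its boundary leaves (if any) are the sup/inf of the $\wt h^n(L)$ and hence fixed by $\wt h$, and precise invariance of $V$ follows from the commutation of $\wt h$ with deck transformations together with the fact that $\gamma$ preserving $V$ is forced once $\gamma V \cap V \neq\emptyset$ by a standard ``no crossing'' argument for leaves. If instead $V$ exhausts all of $\cL_{\wt\cF}$ with no fixed boundary leaves, one is in the translation picture, and one still has to upgrade ``$\wt h$ acts as a fixed-point-free monotone map of a line'' to ``$\cF$ is $\RR$-covered \emph{and uniform}'' — again citing the relevant part of \cite{BFFP}.

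The step I expect to be the main obstacle is precisely ruling out, or correctly channeling, the \emph{non-Hausdorff} behavior of the leaf space: showing that if $\wt h$ moves $L$ then either the region between $L$ and $\wt h(L)$ has Hausdorff (interval) leaf space — giving alternative (2) with all the precise-invariance bookkeeping — or else the branching is so pervasive that the foliation must be $\RR$-covered and uniform and $\wt h$ a translation. Disentangling these requires using both halves of the ``good lift'' hypothesis (bounded distance prevents $\wt h$ from skipping across a branching locus of infinite extent; commutation with $\pi_1(M)$ makes any local branching $\pi_1$-invariantly distributed) and the most substantial input is the uniformity conclusion in the translation case, which is not formal and is where I would lean hardest on \cite{BFFP}. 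The remaining verifications — that boundary leaves of $V$ are $\wt h$-fixed, and that $V$ is precisely invariant — are then routine consequences of monotonicity and the non-crossing of leaves of $\wt\cF$.
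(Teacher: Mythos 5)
Your plan correctly identifies the two properties of a good lift (commutation with deck transformations, bounded distance from identity) and correctly locates the final verifications (boundary leaves of $V$ fixed, precise invariance); but the central case division is logically mis-organized, and as written the argument cannot close.

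The problem is in the ``trichotomy inside $U$.'' You propose that (a) interval leaf space in $U$ leads to alternative (2), while (b) branching inside $U$ ``propagates'' and forces $\cF$ to be $\RR$-covered and uniform, i.e.\ alternative (1). This is self-contradictory: $\RR$-covered means the leaf space of $\widetilde\cF$ is homeomorphic to $\RR$, hence Hausdorff, hence has no branching at all. So non-separated leaves inside $U$ can never lead to alternative (1). Conversely, an interval leaf space in $U$ does \emph{not} single out alternative (2) --- in alternative (1) the leaf space of $U$ is also a closed interval (a subinterval of $\RR$). The correct structure (this is the route the cited source \cite{BFFP} takes, and note the present paper does not re-prove this proposition but imports it) is: first show unconditionally that the leaf space of $\widetilde\cF$ restricted to $U$ is a closed interval --- this is a consequence of the good-lift hypothesis, not one horn of a dichotomy, and it is where bounded distance excludes branching between $L$ and $\widetilde h(L)$; then the genuine dichotomy is whether $V=\bigcup_n\widetilde h^n(U)$ exhausts $\mt$. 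If $V=\mt$ the leaf space is an increasing union of nested compact intervals, hence homeomorphic to $\RR$, and uniformity follows because between any two leaves there are finitely many $\gamma$-translates and $\widetilde h$-iterates of $U$, each of bounded thickness; this gives alternative (1). If $V\neq\mt$, then $\partial V$ is a nonempty $\widetilde h$-invariant union of leaves each fixed by $\widetilde h$, and precise invariance follows from commutation with $\pi_1(M)$ and non-crossing; this gives alternative (2). You would need to rebuild your plan around this division rather than around the presence or absence of branching in $U$.
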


For an account on complementary regions to laminations, we refer the reader to \cite[Section 5.2]{CandelConlon}. 

\begin{lemma}\label{l.homot}
Let $f:M\to M$ be a diffeomorphism which
 preserves a lamination $\Lambda$ with $C^1$ leaves,
and such that each completion of a complementary region of $\Lambda$
is an $I$-bundle.
Then, there exists a homeomorphism $h: M \to M$ homotopic to $f$ which coincides with $f$ in $\Lambda$,  and there exists
a $h$-invariant foliation $\fol$ which extends $\Lambda$. 
\end{lemma}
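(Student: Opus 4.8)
The plan is to construct $h$ and $\fol$ region by region, gluing along the common boundary lamination $\Lambda$. The map $f$ restricted to $\Lambda$ is already a homeomorphism of $\Lambda$ (a $C^1$ lamination), so the only thing to modify is the behaviour of $f$ inside the complementary regions and to produce a foliation there that matches up with $\Lambda$ at the boundary. Let $W$ be the completion of a complementary region of $\Lambda$; by hypothesis $W \cong S \times I$ for some (possibly non-compact, possibly disconnected) surface $S$, with $\partial W = S \times \{0,1\}$ mapped onto leaves of $\Lambda$ under the completion map. First I would note that $f$ permutes the complementary regions; passing to the regions is a bit delicate because $f$ need not send the $I$-bundle structure of one region to that of another, but since the completions are all $I$-bundles, $f$ induces between the corresponding completions a homeomorphism that, up to isotopy rel the boundary, can be taken to respect the product structures. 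This is where Lemma 5.2.12-type arguments about $I$-bundles (as in \cite[Section 5.2]{CandelConlon}) come in: a homeomorphism of $S \times I$ which is already fixed on $S \times \{0,1\}$ is isotopic rel boundary to one of the form $(x,t)\mapsto(\phi(x), t)$ when it is ``untwisted'', and in general to a composition of such a map with a flip of the $I$-factor; the flip is ruled out (or absorbed) by transverse orientability considerations, or else one passes to a double cover — but actually one does not even need to be careful here, because the point is only to replace $f$ inside $W$ by \emph{some} homeomorphism isotopic to it rel $\partial W$.

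Concretely: define $h$ to agree with $f$ on $\Lambda$, and on each completion $W$ of a complementary region define $h|_W$ to be a homeomorphism isotopic to $f|_W$ rel $\partial W$ which preserves the $I$-bundle structure, i.e. $h$ sends the $I$-fibers of $W$ to the $I$-fibers of $f(W)$. The isotopy between $f|_W$ and $h|_W$ being rel $\partial W$, these glue across $\Lambda$ to give a globally well-defined homeomorphism $h: M \to M$ which is homotopic to $f$ (the homotopy is the identity on $\Lambda$ and the chosen isotopy on each $W$, and this is continuous because of the rel-boundary condition) and which coincides with $f$ on $\Lambda$. Now define $\fol$: on $\Lambda$ it is just $\Lambda$ itself; inside each completion $W \cong S \times I$, take the product foliation $\{S \times \{t\}\}_{t \in I}$, pushed forward by the chosen product chart into $M$. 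Near $\partial W$ this product foliation has its boundary leaves $S \times \{0\}$ and $S\times\{1\}$ sitting exactly on the boundary leaves of $\Lambda$, so $\fol$ is a genuine foliation of $M$ extending $\Lambda$ (one should double-check the $C^0$, leaves-$C^1$ regularity: the product charts can be chosen so that the foliation is tangent to a continuous distribution that matches the tangent distribution of $\Lambda$ along the boundary, since $\Lambda$ has $C^1$ leaves and the $I$-bundle structure can be taken smooth transversally to $\Lambda$ near $\partial W$). Finally, $\fol$ is $h$-invariant: on $\Lambda$ this is because $h=f$ there and $f$ preserves $\Lambda$; on each $W$, $h$ was built to carry $I$-fibers of $W$ to $I$-fibers of $f(W)=h(W)$, hence it carries the complementary leaves $S\times\{t\}$ of $W$ to leaves $S'\times\{t'\}$ of $h(W)$, so $h$ permutes the leaves of $\fol$.

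The main obstacle is the middle step: arranging $h|_W$ to respect the $I$-bundle structure while staying isotopic to $f|_W$ \emph{rel the boundary}, and doing this compatibly across all complementary regions (which $f$ may permute with long orbits) so that the global $h$ is well-defined and the resulting $\fol$ is genuinely $h$-invariant rather than merely $h$-invariant up to isotopy. The subtlety is that $f$ may not literally preserve any $I$-bundle structure, only the property of \emph{being} an $I$-bundle; one has to choose the product structures on all the $W$'s in an $f$-equivariant-up-to-isotopy way — e.g. choose product structures freely on a set of orbit representatives among the complementary regions and propagate by $f$, then correct inside a fundamental domain. Transverse orientability issues (the possible $I$-flip) and the fact that $S$ may be noncompact add bookkeeping but no essential difficulty, since we only need existence of $h$ in the homotopy class of $f$, not any canonical choice. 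A secondary point to verify is that $\fol$ has no compact leaves is \emph{not} claimed here — only that it extends $\Lambda$ — so one need not worry about tautness or Reeb components at this stage.
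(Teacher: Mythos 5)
The overall strategy in your proposal --- extend $\Lambda$ to a product foliation in each complementary $I$-bundle, then modify $f$ to a homeomorphism $h$ that preserves that foliation, propagating a choice of product structure along $f$-orbits of regions and correcting inside a fundamental domain --- is the paper's strategy. However, there is a logical slip in your verification that the modified $h$ preserves $\fol$: you argue that since $h$ sends $I$-fibers of $W$ to $I$-fibers of $f(W)$, it follows that $h$ carries $S\times\{t\}$ to leaves $S'\times\{t'\}$. That implication is false; a fiber-preserving map $(x,t)\mapsto(\phi(x),\psi(x,t))$ need not preserve the horizontal foliation. What you actually need is the stronger statement that $h$ carries the extending \emph{foliation} to itself, which is precisely what the lemma asks for. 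The paper's route to this is cleaner: \emph{define} $\fol$ in $f^{i}(A)$ as $f^{i}(\fol_0\vert_A)$ along each $f$-orbit of complementary regions and set $h=f$ there, so that $\fol$-preservation is tautological along the orbit, and then homotope $f$ to $h$ only in the last region of each periodic orbit so as to close the cycle; for infinite orbits no correction at all is needed.

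The second soft spot is your remark that noncompactness of the complementary regions is ``bookkeeping but no essential difficulty.'' It is actually the technical heart of the construction: the homotopy from $f$ to $h$ must be rel boundary yet also make sense in the noncompact ends of the $I$-bundle, and for that one needs the pushed-forward foliation to be transverse to the $I$-fibers in those ends, so that the correction can be performed by sliding along fibers (and hence is small where the fibers are short). The paper arranges this via the octopus decomposition: only finitely many complementary regions have a compact core thicker than any fixed $\delta>0$, so after thinning the noncompact parts one gets the required transversality there, and the finitely many thick compact cores are handled separately. Once these two points are made precise, your outline coincides with the paper's argument.
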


\begin{proof}{}
Since the completion of complementary regions are $I$-bundles, one can extend $\Lambda$ to a foliation $\fol_0$
so that in the closure of each complementary region of $\Lambda$
the foliation $\fol_0$ is a product foliation. 
The completion being an $I$-bundle means that the completion
is diffeomorphic to $F \times [0,1]$ where $F$ is 
surface. Extend the foliation so that the leaves in
$F \times [0,1]$ are $F \times \{ t \}$.
In fact this can be done so that $\fol$ is a $C^{0,1+}$ 
foliation \cite{CandelConlon}.

Choose a one dimensional foliation $\eta$ with $C^1$ leaves,
transverse to $\fol_0$, and 
 so that in each closure of a complementary component
of $\Lambda$ the foliation $\eta$ provides an $I$-bundle structure. 
This is standard, see \cite{CandelConlon}, but we provide the
main ideas: choose a one dimensional foliation $\cG$ with
$C^1$ leaves transverse to
$\fol_0$. Let $U$ be a complementary region and $V$ its completion.
If $V$ is compact, it is obvious how to do this.
Otherwise
$V = K \cup B$ where $K$ is compact and in $B$ the
distance between the upper and lower boundary is very
small. In particular if this distance is very small, then
every leaf of $\cG$ in $B$ goes from the lower boundary
of $B$ to the upper boundary of $B$. Then $K = C \times [0,1]$
where $C$ is a compact surface. In addition in $\partial C \times
[0,1]$ one can assume the one dimensional foliation $\cG$ is
vertical. Since $\cF$ is a product two dimensional
foliation in $C \times [0,1]$ it is
easy to prove that every leaf of $\cG$ in $C \times [0,1]$ 
is a compact segment from $C \times \{ 0 \}$ to 
$C \times \{ 1 \}$.

\vskip .1in
We will now define $\fol$ and the map $h$. Let $A$ be a complementary region. 

First consider the case that the orbit of $A$ is infinite, in
other words $A$ is not periodic. Then for each $n$ define
a foliation $\fol$ in $f^n(A)$ as $f^n(\fol_0 | A)$.
Let $h = f$ in the $f$ orbit of $A$. Clearly this satisfies
the conditions of the Lemma in these regions.

The other case is that $A$ is periodic, let $n$ be the
smallest positive integer so that $f^{n}(A) = A$.
A priori there could be infinitely many such regions.
For each complemetary region $B$, then $B$ union its
two boundary leaves $F_0, F_1$ is an $I$-bundle with
a product foliation.
We consider the case that $F_0$ is non compact, the
other case being simpler.
Then any such $C = B \cup F_0 \cup F_1$ is the union
of a compact core and non compact parts \cite{CandelConlon}.
By choosing the non compact parts very thin, we can ensure
that in such a non compact part of $C$, then
$f(\fol_0)$ is transverse
the $I$-fibers in the particular $I$-bundle.
Finally only finitely many complementary regions have
a compact part with thickness bigger than a given $\delta > 0$.
Hence except for these finitely many compact regions in these
finitely many regions then $f(\fol_0)$ is transverse to the
$I$-fibers.

Now consider a specific periodic $A$ with period $n > 0$.
Let the boundary leaves be $F_0, F_1$.
In each $0 \leq i < n$ define $\fol$ in $f^i(A)$ to be 
the image of $\fol_0 | A$ by $f^i$. 
Now we define $h$. For each $0 \leq i < n-1$ (here
$i \leq n-1$ and not $i \leq n$), let $h$ to be equal
to $f$ in these complementary regions.
Now consider $f^n(A) = A$. The foliation $\fol$ in $A$ is
already defined to be equal to $\fol_0$ here.
Now we will define $h$ in $f^{n-1}(A)$. By hypothesis
$f(\fol)$ is transverse to the $I$-fibration in the 
non compact part of $f^{n-1}(A \cup F_0 \cup F_1)$.
So we can just homotope $f$ to a map $h$ along the $I$-fibers
in the non compact part of $A \cup F_0 \cup F_1$,
so that the new map $h$ sends $\fol | f^{n-1}(A)$
to $\fol | A$, restricted to the non compact part.
In fact we can do this so that $f^n$ fixes
every leaf in the non compact part.
The remaining set in $A$ is compact, where the foliations
are products. Here again $f$ can be homotoped
to $h$ satisfying that $h$ sends $\fol | f^{n-1}(A)$ to
$\fol | A$. 

This finishes the proof of the Lemma.
\end{proof}

As a consequence, we obtain: 

\begin{corollary}{}\label{lami} Suppose that $f$ is a diffeomorphism homotopic to the identity.
Suppose that $f$ preserves a lamination $\Lambda$ with $C^1$ leaves,
and that each completion of a complementary region of $\Lambda$
is an $I$-bundle.
Then, any good lift $\ft$ of $f$ verifies that
\begin{itemize}
\item either there is a compact sublamination $\Lambda'$ 
of $\Lambda$, whose leaves are fixed by $\ft$ when lifted to $\mt$, or
\item let $\fol$ be a foliation which extends
$\Lambda$ as in Lemma \ref{l.homot}. Then $\fol$ is $\RR$-covered and uniform, and $\ft$ acts as a translation on the leaf space
 of $\widetilde \Lambda$ as a subset of the leaf space of $\widetilde \cF$. We sometimes refer to this as
$\ft$ acts as a translation on $\widetilde \Lambda$.
\end{itemize}
\end{corollary}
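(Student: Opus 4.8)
The plan is to apply the dichotomy of Proposition \ref{prop.dich} to the foliation $\fol$ produced by Lemma \ref{l.homot}, and then translate the conclusions back to statements about the original lamination $\Lambda$. First I would invoke Lemma \ref{l.homot} to get a homeomorphism $h: M \to M$, homotopic to $f$, which agrees with $f$ on $\Lambda$, together with an $h$-invariant foliation $\fol$ without compact leaves extending $\Lambda$. Since $h$ is homotopic to $f$ and $f$ is homotopic to the identity, $h$ is homotopic to the identity; moreover, because $h$ agrees with $f$ on $\Lambda$, a good lift $\ft$ of $f$ (restricted to the preimage $\widetilde\Lambda$) and a good lift $\widetilde h$ of $h$ can be chosen so as to coincide on $\widetilde\Lambda$. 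The point here is that the homotopy from $f$ to $h$ is supported in the complementary regions of $\Lambda$ and can be lifted equivariantly; this identifies the action of $\ft$ on the leaf space of $\widetilde\Lambda$ with the action of $\widetilde h$ on the corresponding leaves of $\widetilde\fol$. I would then fix a good lift $\ft$ of $f$ and build $\widetilde h$ accordingly.

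Now there are two cases, according to whether $\widetilde h$ fixes every leaf of $\widetilde\fol$ or not. If there is a leaf $L$ of $\widetilde\fol$ with $\widetilde h(L) \neq L$, apply Proposition \ref{prop.dich}. In its case (1), $\fol$ is $\RR$-covered and uniform and $\widetilde h$ acts as a translation on $\cL_{\widetilde\fol}$; restricting attention to the sublanguage of leaves coming from $\widetilde\Lambda \subset \widetilde\fol$, and using that $\ft = \widetilde h$ on $\widetilde\Lambda$, we obtain exactly the second bullet: $\ft$ acts as a translation on $\widetilde\Lambda$ viewed inside $\cL_{\widetilde\fol}$. In case (2) of Proposition \ref{prop.dich}, the set $V = \bigcup_n \widetilde h^n(U)$ has boundary leaves fixed by $\widetilde h$ and is precisely invariant under $\pi_1(M)$; the $\pi_1(M)$-orbit of the boundary leaves of $V$ then projects to a compact sublamination of $\fol$ with leaves fixed by $\widetilde h$, hence fixed by $\ft$. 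One must check these fixed boundary leaves actually lie in $\Lambda$ and not in the interpolating part of $\fol$: this follows because a leaf of $\widetilde\fol$ that is contained in the interior of the lift of a complementary region is, by construction of $\fol$ in Lemma \ref{l.homot}, either non-periodic under $\ft$ or moved off itself, so a $\ft$-fixed leaf must be a leaf of $\widetilde\Lambda$; taking the sublamination $\Lambda'$ of $\Lambda$ generated by the projections of these fixed leaves, and noting precise invariance makes it closed, gives a compact $\ft$-fixed sublamination, i.e. the first bullet.

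The remaining case is that $\widetilde h$ fixes every leaf of $\widetilde\fol$. Then in particular $\ft$ fixes every leaf of $\widetilde\Lambda$, so $\Lambda$ itself (which is compact, being a closed lamination in a closed manifold) serves as the sublamination $\Lambda'$ in the first bullet, and we are done. The main obstacle I anticipate is the bookkeeping in the previous paragraph: verifying that one may choose $\widetilde h$ agreeing with $\ft$ on $\widetilde\Lambda$ — which requires that the homotopy from $f$ to $h$ be equivariant and relative to $\Lambda$, so that no spurious deck-transformation twist is introduced — and then arguing that the $\ft$-fixed leaves produced by Proposition \ref{prop.dich}(2) genuinely belong to $\widetilde\Lambda$ rather than to the extended part of $\widetilde\fol$. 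Both points are consequences of the explicit construction in the proof of Lemma \ref{l.homot}, where the extended foliation is a product in each complementary region and $h$ is homotoped to $f$ only along the $I$-fibers there; I would spell these out by referring back to that construction.
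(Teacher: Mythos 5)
Your overall plan—pass to $h$ and $\fol$ via Lemma~\ref{l.homot}, apply Proposition~\ref{prop.dich}, and translate back—matches the paper's. But the key step in the middle of your second paragraph has a genuine gap. The crucial fact one must prove (and which the paper isolates explicitly) is: \emph{if $\widetilde h$ fixes a leaf of $\widetilde\fol$, then $\ft$ fixes a leaf of $\widetilde\Lambda$.} You try to dispatch this by asserting that a leaf of $\widetilde\fol$ lying in the interior of a lifted complementary region ``is, by construction of $\fol$ in Lemma~\ref{l.homot}, either non-periodic under $\ft$ or moved off itself.'' This is not correct and not something the construction gives you: the map $\ft$ does not even preserve $\fol$ off $\Lambda$, and the modification $h$ in the construction of Lemma~\ref{l.homot} is explicitly allowed to fix leaves in the interior of periodic complementary regions (the proof of that lemma even arranges $f^n$ to fix every leaf in the non-compact parts of those regions). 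So there is no reason a $\widetilde h$-fixed leaf of $\widetilde\fol$ must already lie in $\widetilde\Lambda$. The correct argument is indirect: if $L\in\widetilde\fol$ is fixed by $\widetilde h$ and lies in the interior of a lifted complementary region $\widetilde W$, then $\widetilde h$ preserves $\widetilde W$, and since the completion of $\widetilde W$ is a product foliated $I$-bundle with its two boundary leaves in $\widetilde\Lambda$, $\widetilde h$ fixes those two boundary leaves; as $\widetilde h = \ft$ on $\widetilde\Lambda$, one obtains the required $\ft$-fixed leaf in $\widetilde\Lambda$.

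A second, smaller gap is that you conclude the first bullet by ``taking the sublamination generated by the projections of these fixed leaves, and noting precise invariance makes it closed.'' Precise invariance of $V$ is a statement about a single translation region and does not by itself show that the set of $\ft$-fixed leaves of $\widetilde\Lambda$ is closed (hence projects to a compact sublamination). The paper establishes closedness by invoking \cite[Proposition 3.7]{BFFP}, which states precisely that the set of leaves of $\widetilde\Lambda$ fixed by $\widetilde h$ is closed. Once you have a single $\ft$-fixed leaf of $\widetilde\Lambda$, that proposition gives you the compact sublamination $\Lambda'$, and case (1) of Proposition~\ref{prop.dich} gives the second bullet when there is no such fixed leaf. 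So your approach is repairable, but as written it relies on a false statement about the construction and leaves the closedness unaddressed.
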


\begin{proof}{}
First we apply Lemma \ref{l.homot} to obtain $\fol$ and $h$.
At this point, the only property that needs to be proved
is that if $\widetilde h$ is a good lift of $h$,
and $\widetilde h$ fixes a leaf of $\widetilde \fol$ then $\ft$ fixes a leaf of $\widetilde \Lambda$. 
This is because we also proved in \cite[Proposition 3.7]{BFFP} that the
set of leaves of $\widetilde \Lambda$ fixed by $\widetilde h$
is a closed subset of leaves, and hence projects to a sublamination
of $\Lambda$ in $M$.

Suppose that there is a leaf $L$ of $\widetilde \fol$
that is fixed by $\widetilde h$. If $L$ is in $\widetilde \Lambda$,
then $\ft$ also fixes $L$ and we are done.

Otherwise $\pi(L)$ is in a complementary region $W$ of $\Lambda$.
The completion of $W$ is a product foliated $I$-bundle.
Since $\ft$ preserves $\widetilde W$ and $\ft$ preserves
$\widetilde \Lambda$, it now follows that
$\ft$ fixes each of the two boundary leaves of $\widetilde W$.
This finishes the proof of the corollary.
\end{proof}

\begin{remark}\label{r.boundedmovement} 
In the first case, where there is a compact lamination $\Lambda'$ of leaves whose lifts are fixed by $\ft$, it follows that there is a uniform bound on the displacement of points inside the leaves by $\ft$. This is because $\ft$ is bounded distance from the identity and the lamination $\Lambda'$ is compact. See \cite[Section 3]{BFFP}.
\end{remark}


\subsection{Dynamical coherence and incoherence}

A partially hyperbolic diffeomorphism is {\em dynamically coherent}
if there are foliations $\fol^{cs}$ and $\fol^{cu}$ that are
$f$-invariant, and everywhere tangent to $E^{cs} = E^s \oplus E^c$
and $E^{cu} = E^c \oplus E^u$.
These are called the center stable and center unstable foliations.
In that case the intersection of these two foliations
is a one dimensional foliation tangent to $E^c$, called the center
foliation.
But this condition is not always satisfied, there are several 
recent examples which are not dynamically coherent
\cite{HHU-noncoherent,BGHP}. See also \cite[Section 4]{HP-survey} for more information and context. 

However, under very general orientability conditions
there are ``generalized"
foliations tangent to $E^{cs}$ and $E^{cu}$:

\begin{theorem}[Burago-Ivanov \cite{BI}]\label{teo-BI}  
Let $f$ be a partially hyperbolic diffeomorphism of a $3$-manifold $M$ so that the bundles  $E^s, E^c, E^u$
are orientable and $Df$ preserves these orientations.
Then, there are collections $\cW_{bran}^{cs}$ and $\cW_{bran}^{cu}$ of complete immersed surfaces tangent respectively to $E^{cs}$ and $E^{cu}$ satisfying the following properties: 
\begin{itemize}
\item every point $x \in M$ belongs to at least one surface (called \emph{leaf}) of $\cW^{cs}_{bran}$ (resp. $\cW^{cu}_{bran}$),
\item  the collection is $f$-invariant: if $L \in \cW^{cs}_{bran}$ (resp. $L \in \cW^{cu}_{bran}$) then $f(L) \in \cW^{cs}_{bran}$ (resp. $\cW^{cu}_{bran}$), 
\item different leaves of $\cW^{cs}_{bran}$  (resp. $\cW^{cu}_{bran}$) do not topologically cross,
\item if $x_n \to x$ and $L_n$ are leaves of $\cW^{cs}_{bran}$ (resp. $\cW^{cu}_{bran}$) containing $x_n$ then up to 
subsequence  $L_n \to L$ for a leaf $L$
of $\cW^{cs}_{bran}$ (resp. $L \in \cW^{cu}_{bran}$). 
\end{itemize}
Moreover, for every $\eps>0$ there are approximating foliations $\cW^{cs}_\eps$ and $\cW^{cu}_\eps$ tangent to subspaces making angle less than $\eps$ with $E^{cs}$ (resp. $E^{cu}$). In addition
there are continuous maps $h^{cs}_\eps: M\to M$ and $h^{cu}_\eps : M \to M$ at $C^0$-distance less than $\eps$ from identity so that $h^{cs}_\eps$ (resp. $h^{cu}_\eps$) maps an arbitrary leaf of 
of $\cW^{cs}_\eps$ (resp. $\cW^{cu}_\eps$) into a leaf
of $\cW^{cs}_{bran}$ (resp. $\cW^{cu}_{bran}$), by a local diffeomorphism. 
\end{theorem}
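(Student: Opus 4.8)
The plan is to establish the Burago--Ivanov branching foliations directly from the dynamics, and then to ``unbranch'' them to produce the approximating foliations. It suffices to treat $\fbs$ and $E^{cs}$: applying the result to $f^{-1}$ interchanges the roles of $E^s$ and $E^u$, hence of $E^{cs}$ and $E^{cu}$, and yields $\fbu$ together with the maps $\hu$. Orientability lets us fix continuous unit vector fields spanning $E^s,E^c,E^u$, and we use two structural inputs. First, $E^s$ is uniquely integrable to the one-dimensional foliation $\cW^s$, which is tangent to a line subbundle of $E^{cs}$. Second, the splitting is dominated, so there is a cone field around $E^{cs}$ that is invariant under backward iteration and along which $Df^{-1}$ contracts transverse ($E^u$) directions; this is the estimate that drives the limiting construction.

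\emph{A leaf through each point.} Fix $x\in M$ and, for each $n\geq 0$, choose a $C^1$ embedded disk $D_n$ through $f^n(x)$ of uniformly bounded intrinsic size, tangent to $E^{cs}$ at $f^n(x)$ and with all tangent planes inside a fixed thin cone around $E^{cs}$. Backward iteration pushes the tangent planes of $f^{-n}(D_n)$ uniformly toward $E^{cs}$ on intrinsic balls of radius tending to infinity, while the expansion of $E^s$ under $Df^{-1}$ makes $f^{-n}(D_n)$ intrinsically large; the cone condition and uniform continuity of $E^{cs}$ yield equicontinuity of the tangent planes. Passing to a subsequence, $f^{-n}(D_n)\to L$ in $C^1_{\mathrm{loc}}$, a complete immersed surface through $x$ everywhere tangent to $E^{cs}$. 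Let $\fbs$ be the family of all such limit surfaces, over all base points and subsequences. Then every point lies on a leaf, leaves are complete, $f$ permutes $\fbs$ (run the construction at $f(x)$ with shifted indices), and any $C^1_{\mathrm{loc}}$-limit of leaves is again a leaf (diagonal argument over the defining sequences). This is essentially the ``fake invariant foliations'' mechanism.

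\emph{No topological crossings.} This is the delicate point. If $L,L'\in\fbs$, then along $L\cap L'$ both surfaces are tangent to $E^{cs}$, hence tangent to each other, so there is no transverse intersection; one must still exclude a topological crossing, in which a leaf passes from one local side of another to the opposite side. Here the one-dimensional foliation $\cW^s\subset E^{cs}$ is essential: since $E^s\subset TL\cap TL'$ and $E^s$ is uniquely integrable, $L\cap L'$ is saturated by stable leaves, hence is a union of leaves of the restricted foliation $\cW^s|_L$ on each of $L$ and $L'$. A topological crossing would force a Reeb-like configuration for this picture, which is excluded using the uniform contraction of $f$ along stable leaves together with the limit description of the leaves. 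I expect \textbf{this non-crossing statement to be the main obstacle}; it is where one genuinely needs the leaves to come from the dynamics rather than be arbitrary surfaces tangent to $E^{cs}$.

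\emph{Unbranching and conclusion.} A codimension-one branching foliation on a closed $3$-manifold---a family of complete immersed surfaces, pairwise non-crossing, through every point, tangent to a continuous plane field---can be perturbed to an honest foliation by slightly blowing up its branching locus. This gives, for each $\eps>0$, a foliation $\fes$ tangent to a plane field making angle less than $\eps$ with $E^{cs}$, together with a continuous map $\hs:M\to M$ at $C^0$-distance less than $\eps$ from the identity that collapses the blow-up and sends each leaf of $\fes$ into a leaf of $\fbs$ by a local diffeomorphism. Running the symmetric argument for $f^{-1}$ produces $\fbu$, $\feu$ and $\hu$, giving all the assertions of the theorem.
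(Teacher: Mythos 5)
The paper does not prove this statement: it is quoted as Burago--Ivanov's theorem with a citation to [BI] and used as a black box, so there is no argument in the paper to compare against. I will therefore assess your sketch on its own terms. The overall architecture---build the branching foliations first, then unbranch to get $\fes$ and $\hs$---is correct, as is the reduction of $\fbu$ to $\fbs$ via $f^{-1}$ and the use of the uniquely integrable stable foliation inside $E^{cs}$ as the organizing tool.

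The gap is where you place it, but it is more serious than the sketch suggests. Defining $\fbs$ as the set of all $C^1_{\mathrm{loc}}$ limits of $f^{-n}(D_n)$ controls the tangent planes of the limit surfaces (they become tangent to $E^{cs}$), but it imposes no coherence between limits arising from different initial disks $D_n$ or different subsequences. When $E^{cs}$ fails to be integrable---the very case branching foliations exist to handle---two such limits $L,L'$ through the same point can split apart, and nothing in the construction prevents a topological crossing between them. The observation that $L\cap L'$ is $\cW^s$-saturated tells you that in a local product chart both are graphs over a stable plaque, but it does not forbid the two graphs from crossing in the transverse (center) direction; this is precisely the phenomenon that has to be ruled out. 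The appeal to a ``Reeb-like configuration excluded by stable contraction'' is not an argument but a restatement of what needs to be proved, and it is the crux of Burago--Ivanov. Their proof does not take limits of graph transforms and then try to establish non-crossing a posteriori: it works locally with the stable line field inside $E^{cs}$ and an ordering on local center curves to select, through each point, surfaces for which non-crossing is built in by construction, with invariance and the closedness-under-limits property verified afterwards. As written, the family $\fbs$ you define is too large for the conclusion to hold; it must be cut down to a coherent subfamily, and doing so is the substance of the theorem. The unbranching paragraph is likewise asserted without proof, though that direction is comparatively routine once a genuine branching foliation is in hand.
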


The collections of surfaces $\fbs, \fbu$ are called the
center stable and center unstable branching foliations.
The individual surfaces are called the leaves.
If say $\fbs$ does not form a foliation, then $h^{cs}_{\eps}$ is 
not a local diffeomorphims for any $\eps>0$: there are open sets
where it collapses leaves transversely.
When $h^{cs}_{\eps}$ is restricted to an arbitrary leaf 
of $\fbs$, then $h^{cs}$ is 
a local diffeomorphism. Even then it may not be a global diffeomorphism
because leaves of $\fbs$ may self intersect, forming branching locus.

\vskip .1in
When lifted to the universal cover the foliations $\wfbs,\wfbu$ have
``leaf spaces" which are one dimensional, simply connected,
but possibly non Hausdorff, just as in the case of foliations.
Since a point does not determine the leaf it is on, this is
not immediate as in the case of actual foliations. But
one can approximate $\fbs$ by actual foliations and the
above follows, using such an approximation. We refer the
reader to \cite[Section 3]{BFFP2} for a detailed treatment.
A good lift acts on $\mt$ and hence acts as homeomorphisms on the
leaf spaces of $\wfbs$ and $\wfbu$.

\begin{remark}
One can always take a finite lift $g$ of an iterate of $f$
so that $g$ satisfies the condition of Theorem \ref{teo-BI},
and hence $g$ admits branching foliations as in 
Theorem \ref{teo-BI}.
\end{remark}

\subsection{Classification results}

We need the following classification result from \cite[Theorem B]{BFFP2} for hyperbolic manifolds\footnote{The results hold for general partially hyperbolic diffeomorphisms homotopic to the identity, we refer the reader to \cite[Section 2]{BFFP2} for such statements including some stronger results even in the hyperbolic manifold case.}. 

\begin{theorem}[\cite{BFFP2}]\label{teo.clasif}
Let $f: M \to M$ be a partially hyperbolic diffeomorphism of a hyperbolic 3-manifold $M$ with $f$ homotopic to the 
identity, and let $\ft$ be a good lift of $f$.  Suppose that $f$ preserves branching foliations 
$\fbs, \fbu$. Then either
\begin{enumerate}
\item $f$ is a discretized Anosov flow, and in particular $f$ is
dynamically coherent; 
\item\label{it.2} or $f$ is not dynamically coherent
and $\ft$ is a \emph{double translation}. That is, $\ft$ is a
translation on each of the leaf spaces of $\wfbs$ and $\wfbu$.
\end{enumerate}
\end{theorem}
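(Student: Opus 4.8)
We only indicate the strategy, since this statement is \cite[Theorem 1.3]{BFFP}: the idea is to apply the dichotomy of Proposition~\ref{prop.dich} separately to the two branching foliations and then combine the outcomes. First, passing to a finite lift of an iterate of $f$ — which affects none of the conclusions, by the remark following Theorem~\ref{teo-BI} and by Proposition~\ref{p.mostow} — one may assume that $f$ is homotopic to the identity, that $\ft$ is a good lift, and that $f$ preserves branching foliations $\fbs,\fbu$ with all bundles and branching foliations coherently oriented. Since $M$ is hyperbolic it is atoroidal, so neither $\fbs$ nor $\fbu$ has a compact leaf (a compact leaf would be a torus tangent to $E^{cs}$ or $E^{cu}$ and would yield an incompressible torus in $M$). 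Hence the leaf spaces $\cL_{\wfbs}$ and $\cL_{\wfbu}$ are one-dimensional, simply connected, possibly non-Hausdorff manifolds on which $\ft$ acts, and the dichotomy of Proposition~\ref{prop.dich} applies to each — it holds for branching foliations after approximating by genuine foliations, as recalled before the statement of the theorem and in \cite[Section 3]{BFFP}. Thus for $\fbs$: either $\ft$ fixes a leaf of $\wfbs$, or $\cW^{cs}$ is $\RR$-covered and uniform and $\ft$ acts as a translation on $\cL_{\wfbs}$; and likewise for $\fbu$.

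Next I would run the case analysis produced by the two dichotomies. If $\ft$ fixes a leaf of both $\wfbs$ and $\wfbu$, one carries out the argument of the doubly invariant case of \cite{BFFP}: the fixed leaves form a closed invariant set, $E^c$ is uniquely integrable inside fixed center-stable (and hence center-unstable) leaves into properly embedded lines, $\ft$ displaces points a uniformly bounded amount along these center lines with a coherent sign — here Remark~\ref{r.boundedmovement} and the theorem of \cite{BFFP} that $f$ has no contractible periodic points are the essential inputs — and parametrising the center foliation by a flow yields a topological Anosov flow $\Phi$ with $\Phi$-invariant orbit foliation and $f(x)=\Phi_{\tau(x)}(x)$ for a positive continuous $\tau$; that is, $f$ is a discretized Anosov flow, which is conclusion (1). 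If $\ft$ fixes a leaf of one branching foliation but translates the leaf space of the other, one derives a contradiction; this is the analogue of the ``mixed case'' of \cite{BFFP}, and it is where hyperbolicity enters, through the $\RR$-covered uniform structure on the translated side, the bounded-displacement structure on the fixed side, and again the absence of contractible periodic points.

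Finally, if $\ft$ translates both leaf spaces then by definition it is a double translation, and it only remains to see that $f$ is not dynamically coherent. Were it dynamically coherent, $f$ would preserve genuine transverse foliations tangent to $E^{cs}$ and $E^{cu}$, and the classification of dynamically coherent partially hyperbolic diffeomorphisms of hyperbolic $3$-manifolds (see subsection~\ref{ss.Triple}, following \cite{BFFP}) would force $f$ to be a discretized Anosov flow; but a discretized Anosov flow has a good lift fixing every leaf of its center-stable branching foliation, contradicting that $\ft$ translates $\cL_{\wfbs}$. This gives conclusion (2).

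The crux of the argument is the doubly invariant case: proving unique integrability of $E^c$ inside the fixed leaves while controlling the possibly non-Hausdorff branching, choosing the sign of the center displacement coherently so that $\tau$ is continuous and strictly positive, and checking that the resulting flow is genuinely topologically Anosov — with no singular periodic orbits — rather than merely pseudo-Anosov; this last point is exactly where the non-existence of contractible periodic points is used. Ruling out the mixed configuration is the second serious step, while the double-translation case is essentially formal once the classification of dynamically coherent examples is available.
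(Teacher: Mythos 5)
The paper does not prove this statement; it imports it as \cite[Theorem 1.3]{BFFP}, so there is no internal proof to compare your sketch against. As an outline of the argument in that reference your sketch is sound: pass to an orientable, homotopic-to-identity finite lift of an iterate, apply the fixed-leaf/translation dichotomy of Proposition~\ref{prop.dich} to each of $\wfbs$ and $\wfbu$, build the discretized Anosov flow in the doubly fixed case (with the no-contractible-periodic-points result giving a coherent sign for the center displacement), exclude the mixed case, and identify the remaining case as a double translation. One caution on your final step: as written, arguing that a double translation cannot be dynamically coherent by appealing to ``the classification of dynamically coherent partially hyperbolic diffeomorphisms'' reads as circular, since that classification is precisely what Theorem~\ref{teo.clasif} asserts; the correct logic is that \cite{BFFP} (and its dynamically coherent companion) establish the coherent case \emph{first and independently}, and only then is it available to rule out coherence in the translation case. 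You might make that order explicit. A second, minor point: the exclusion of compact leaves is not Remark~\ref{rem.toriimplysolv} (which concerns $E^s\oplus E^u$); the correct reason, which you essentially give, is that a compact $cs$- or $cu$-leaf carries a nonsingular stable (resp.\ unstable) line field, hence is a torus, hence incompressible, and hyperbolic manifolds are atoroidal.
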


By a translation on the leaf space of $\wfbs$ we mean the following:
the foliation $\fbs$ is approximated arbitrarily well by
a Reebless foliation. This implies that one can also define
the leaf space of the branched foliation $\wfbs$ and that it
is a simply connected one manifold, possibly non Hausdorff.
If it is Hausdorff, then it is homeomorphic to $\R$.
So $\ft$ acting as a translation on the leaf space of
$\wfbs$, means that $\ft$ does not fix any leaf of $\wfbs$,
this leaf space is homeomorphic to $\R$, and $\ft$ acts
as a translation on $\R$.  

We remark that there are no known examples of partially hyperbolic diffeomorphisms of the form (\ref{it.2}) in $M$ hyperbolic. On
the other hand it is shown in \cite{BFFP2} that
 such hypothetical examples should share several features with the actual recent examples discovered in \cite{BGHP}. 
We stress that this refers to a hypothetical possibility
in hyperbolic $3$-manifolds in \cite{BFFP,BFFP2},
and to actual examples in some Seifert manifolds
$M = T^1 S$ in \cite{BGHP}. In fact, in a paper in preparation \cite{FP} we show the following:
that every partially hyperbolic diffeomorphism in
a Seifert manifold, in the isotopy class of those described in \cite{BGHP} (when the action in the base is pseudo-Anosov), as well as every 
hypothetical double translation in a hyperbolic 3-manifold should look very similar to each other
from the point of view of their topological structure. 

We also mention the following dynamical consequence that will be useful for this paper which is \cite[Theorem 1.3]{BFFP2}.
Recall that a fixed point of $\ft^k$ where $\ft$ is a good lift of $f$ is called a \emph{contractible periodic point.}
The foliation
$\fbs$ (or $\fbu$) is $f$-{\emph {minimal}} if $M$ is the only
non empty, closed, $\fbs$ (resp $\fbu$) saturated set that is $f$-invariant.


\begin{theorem}[\cite{BFFP2}]\label{teo.nocontractible}
Let $f: M\to M$ be a partially hyperbolic diffeomorphism homotopic to identity in $M$ 
so that $\pi_1(M)$ is not virtually solvable.
Suppose that there are $f$-invariant branching foliations
$\fbs, \fbu$. Suppose that either $\fbs$ or $\fbu$ is
$f$-minimal. Then $f$ has no contractible periodic points.
\end{theorem}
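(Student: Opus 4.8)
The plan is to argue by contradiction, exploiting that a fixed point of a good lift forces an invariant center stable leaf, and then that $f$-minimality makes such invariant leaves fill up the manifold, which is incompatible with the bounded displacement of a good lift in a manifold with non–virtually–solvable fundamental group. So suppose $\ft^k$ has a fixed point $p\in\mt$ for some $k\geq 1$; write $\tilde g:=\ft^k$ (a good lift of $f^k$). Taking a finite lift if needed, I may assume $E^s,E^c,E^u$ are orientable and $Df$ preserves the orientations, so that Theorem \ref{teo-BI} provides the branching foliations and $\tilde g$ preserves the transverse orientations of $\wfbs,\wfbu$; say $\fbs$ is the $f$-minimal one (the case of $\fbu$ being symmetric by reversing time). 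First I would produce a $\tilde g$-invariant leaf of $\wfbs$ through $p$: since $\tilde g$ fixes $p$ and permutes the (at most countably many) leaves of $\wfbs$ through $p$ preserving their transverse order and orientation, and since $\tilde g$ is at bounded distance from the identity, the extremal such leaves are fixed; fix one, $L$, with $p\in L$ and $\tilde g(L)=L$. Then $L\cong\RR^2$ carries, by Theorem \ref{teo-BI} and the structure of branching foliations, two genuine transverse one–dimensional foliations: the stable foliation $\cW^s\cap L$, along which $\tilde g$ contracts uniformly, and a center foliation inside $L$; the map $\tilde g|_L$ fixes $p$ and preserves both, hence fixes the stable leaf $s:=(\cW^s\cap L)(p)$, contracting it toward $p$, and fixes the center leaf $c$ through $p$ in $L$.

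Next I would bring in $f$-minimality. Let $\cS\subset\mt$ be the set of leaves of $\wfbs$ fixed by $\tilde g$. It is nonempty (it contains $L$), closed in the leaf space by the branching–foliation version of \cite[Proposition 3.7]{BFFP}, invariant under all deck transformations (as $\tilde g$ commutes with them), and invariant under $\ft$ as well, since $\ft$ commutes with $\tilde g=\ft^k$, so $\tilde g(\ft N)=\ft(\tilde g N)=\ft N$ for any $N\in\cS$. Hence the image of $\cS$ in $M$ is a nonempty, closed, $f$-invariant, $\fbs$-saturated set, and $f$-minimality of $\fbs$ forces it to be all of $M$. Thus every point of $M$ lies on an $f^k$-invariant leaf of $\fbs$; in particular, away from the branching locus $\tilde g$ fixes the center stable leaf through every point, and we may take $L$ to project to an $f^k$-invariant leaf $\pi(L)\subset M$. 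Also, by Remark \ref{r.boundedmovement}, on each such fixed leaf the displacement of $\tilde g$ along the leaf is uniformly bounded.

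The remaining, and main, step is to turn ``$\tilde g$ fixes $p$, fixes a full family of center stable leaves, contracts uniformly along the stable foliation inside each of them, yet has bounded displacement $K$ in $\mt$, with $\pi_1(M)$ not virtually solvable'' into a contradiction, and I expect this to be the hard part. My plan is to run the coarse–contraction argument of \cite{BFFP} used for the doubly invariant case: inside the fixed leaf $L$, uniform contraction along the stable foliation combined with bounded displacement of $\tilde g$ forces the center leaf $c$, and the center leaves of nearby fixed cs–leaves, to be coarsely contracted by $\tilde g$; iterating, an entire open product region of $L$ is attracted to the bounded set $\{\tilde g^n(p):n\geq 0\}$. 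This traps a leaf of $\wfbu$ in a bounded neighborhood under all forward iterates of $\tilde g$ — impossible, since leaves of $\wfbu$ are properly embedded planes with $\tilde g$ at bounded distance from the identity — or else it produces a $\tilde g$-periodic center leaf that is coarsely contracted and, passing to a fixed cu–leaf through one of its points, also coarsely expanded, which contradicts transversality of $\fbs$ and $\fbu$ since no invariant $\RR$–covered, Anosov–flow–like structure is available once $\pi_1(M)$ is not virtually solvable. Making the first alternative precise — in particular handling the branching loci and the fact that the ``center foliation'' is only an honest foliation inside each cs–leaf and not globally — is where the bulk of the work lies, and would be carried out following the doubly invariant analysis of \cite{BFFP}.

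Finally, I would note that the reductions are harmless: a finite iterate and a finite lift of $f$ have a contractible periodic point exactly when $f$ does (the good lift of $f^k$ is $\ft^k$, and lifting to a finite cover does not change $\mt$ or $\ft$), and one checks that $f$-minimality of $\fbs$ and non–virtual–solvability of $\pi_1$ pass to the finite lift one uses to secure orientability; so obtaining the contradiction for the reduced system proves the theorem in general.
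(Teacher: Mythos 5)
Note first that the paper you are working from does \emph{not} prove this theorem: it is quoted verbatim as \cite[Theorem 1.6]{BFFP}, and the text immediately following the statement says only that it is a ``dynamical consequence... which is \cite[Theorem 1.6]{BFFP}.'' So there is no in-paper proof to compare your attempt against; the only way to vet it is against the actual argument in \cite{BFFP}.

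Your reductions are sound and do match the expected skeleton. Passing to a finite lift/iterate to get orientability and Theorem~\ref{teo-BI} is harmless; the extraction of a $\tilde g$-fixed leaf $L\in\wfbs$ through the fixed point $p$ by taking an extremal leaf of the (interval) family through $p$ is correct, since $\tilde g$ preserves transverse orientation and acts on a closed interval in the leaf space; and the use of $f$-minimality is handled properly, including the point that the set $\cS$ of $\tilde g$-fixed leaves is $\ft$-invariant (because $\ft$ and $\tilde g=\ft^k$ commute) and hence projects to an $f$-invariant, $\fbs$-saturated, closed, nonempty set which must be all of $M$. The remark that this only gives a fixed leaf through every point (not that every leaf through a branch point is fixed) is also the right caveat.

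However, the proposal has a genuine gap at exactly the place you flag: the step that converts ``$\tilde g$ fixes $p$, fixes a family of cs-leaves covering $M$, contracts stables in each, has bounded displacement, and $\pi_1(M)$ is not virtually solvable'' into a contradiction is not actually carried out. You offer two loosely-sketched alternatives and explicitly say that making them precise ``is where the bulk of the work lies, and would be carried out following the doubly invariant analysis of \cite{BFFP}.'' That is a deferral, not a proof. Moreover, as written the sketch has problems: bounded displacement plus stable contraction inside $L$ does not by itself ``coarsely contract'' the center --- a neutral center is perfectly compatible with both; and even if the whole of $L$ were attracted to a bounded set, $L$ is a properly embedded plane so this is not automatically contradictory, and the claim that a leaf of $\wfbu$ gets ``trapped in a bounded neighborhood'' needs an actual argument (why would a $cu$-leaf, which is not contained in $L$, be confined by dynamics on $L$?). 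Note also that the coarse-contraction/expansion argument of \cite[Section 4]{BFFP} and of this paper's Proposition~\ref{p.fixedleaf} begins from the premise that $\ft$ acts \emph{freely} on the stable leaf space of the fixed leaf, which is obtained precisely by invoking the absence of contractible periodic points; so that argument cannot be reused circularly here, and the actual proof of Theorem 1.6 in \cite{BFFP} must run along different lines. Until the contradiction step is spelled out concretely, the proposal does not establish the theorem.
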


This result does not need $M$ hyperbolic. On the other
hand, when $M$ is hyperbolic one does not need to assume that the branching foliations exist, or are $f$-minimal as this can be achieved by a finite cover.

\section{Proof of Theorem B}
\label{section4}
Consider a partially hyperbolic diffeomorphism $f: M\to M$ where $M$ is a closed hyperbolic 3-manifold. The goal is to prove that $f$ is
accessible.
Since the strong stable and unstable foliations are the same for iterates, it is no loss of generality to assume that $f$ is homotopic to identity and admits a good lift $\ft$ to $\mt$  (c.f. Proposition \ref{p.mostow}). We will assume that $f$ is non-wandering (recall that if $f$ is non-wandering, so are its iterates\footnote{This is because for a homeomorphism of a compact metric space whose non-wandering set is all the space, the set of recurrent points is dense.}). 

For most of the analysis we will not need to assume that $M$ is hyperbolic, just that $f$ admits a good lift $\ft$ to $\mt$ and that $\pi_1(M)$ is not virtually solvable. We will state explicitly the place where we use that $M$ is hyperbolic. 

We will proceed by contradiction assuming that $f$ is not accessible and appeal to Theorem \ref{t.HHU} that provides an $f$-invariant lamination $\Lambda^{su}$ whose leaves are tangent to $E^s \oplus E^u$. Moreover, this
lamination has no closed leaves (because $\pi_1(M)$ is not virtually solvable, c.f. Remark \ref{rem.toriimplysolv}) and can 
be completed to a foliation $\cF$ so that the completion $U$ of any
complementary region of $\Lambda^{su}$ is a product 
foliated $I$-bundle. In fact, from Theorem \ref{t.HHU} we know that the open set $U \cong F \times (0,1)$ (the completion of $U$ being the addition of $F\times \{0\}$ and $F \times \{1\}$) and: 
\begin{itemize}
\item[(1)] the foliation $\fol$ in $U$ is a product
foliation, and, 
\item[(2)] the center bundle in $U$ is uniquely
integrable and the center one dimensional foliation is also a
product foliation in $U$. 
\end{itemize}

This is the only place where the fact that $f$ is non-wandering will be used. (We do not really need in (2) the center to be uniquely integrable in those regions, just that center curves join both sides of the complementary region, but it is helpful for the presentation of the arguments.)

We denote by $\widetilde \Lambda^{su}$ and 
$\widetilde \cF$ the lifts of these objects to $\mt$. The lifted lamination 
$\widetilde \Lambda^{su}$ is invariant under $\ft$. 
In addition there is $h$ homotopic to $f$ so that $h = f$
in $\Lambda$ and $h$ preserves $\fol$. Let $\widetilde h$ be a corresponding
good lift of $h$ to $\mt$.

We can apply Corollary \ref{lami} to the lamination $\widetilde \Lambda^{su}$.
This separates the study into two cases, which we will deal with separately. 

\vskip .15in
\noindent
{\bf {Case I}} $-$ 
There  is a leaf of $\widetilde \Lambda^{su}$ which is fixed by the good lift $\ft$. 

By Corollary \ref{lami} this is equivalent to $\widetilde h$ 
fixing a leaf of $\widetilde \fol$.
Here we will prove the following:

\begin{proposition}\label{p.fixedleaf}
Consider a good lift $\ft$ of a partially hyperbolic diffeomorphism $f: M \to M$, 
homotopic to the identity, so that $\pi_1(M)$ is not
virtually solvable.
Then $\ft$ cannot leave invariant a leaf of $\widetilde \Lambda^{su}$. 
\end{proposition}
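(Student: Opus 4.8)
Suppose for contradiction that $\ft$ fixes a leaf $L_0$ of $\widetilde\Lambda^{su}$. By Corollary~\ref{lami} (its first alternative) and Proposition~3.7 of \cite{BFFP}, the set of leaves of $\widetilde\Lambda^{su}$ fixed by $\ft$ is closed and projects to a compact sublamination $\Lambda'$ of $\Lambda^{su}$, all of whose leaves lift to $\ft$-fixed planes; pick such a leaf $L = \pi(L_0)$. The first goal is to understand the dynamics of $\ft$ inside the plane $L_0$. Since $\ft$ preserves $L_0$ and permutes the stable and unstable foliations of $f$ restricted to $L=\pi(L_0)$ (which are one-dimensional foliations of the surface $L$, tangent to $E^s$ and $E^u$ respectively, and transverse to each other), the lifted map $\ft|_{L_0}$ is a homeomorphism of the plane $L_0$ preserving the two transverse lifted foliations $\widetilde\cW^s_{L_0}$ and $\widetilde\cW^u_{L_0}$. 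By Remark~\ref{r.boundedmovement}, $\ft|_{L_0}$ moves points a uniformly bounded distance in the intrinsic metric of $L_0$.

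The heart of the argument is to produce a contractible periodic point and then invoke Theorem~\ref{teo.nocontractible} for the contradiction. First I would show that $\ft$ fixes a stable leaf and an unstable leaf inside $L_0$, and then a point. The standard mechanism: because $\ft|_{L_0}$ has bounded displacement and commutes with the deck transformations of $\pi_1(L)$ acting cocompactly on $L_0$, a Brouwer-type / Poincaré--Bendixson argument on the leaf space of $\widetilde\cW^s_{L_0}$ (a simply connected $1$-manifold on which $\ft$ acts with bounded displacement, hence cannot be a nontrivial translation on an $\R$-factor, since that would contradict boundedness after deck-translating) forces a fixed stable leaf $s_0$. Similarly one gets a fixed unstable leaf $u_0$. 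One must then arrange that a fixed stable and a fixed unstable leaf intersect: if they do, their (unique, by transversality) intersection point is fixed by $\ft$, giving a contractible fixed point of $f$, contradicting Theorem~\ref{teo.nocontractible} (whose hypotheses hold here since $\pi_1(M)$ is not virtually solvable, and $f$-minimality of the branching foliations can be assumed after a finite cover, or one argues directly on a minimal sublamination). The case $\pi_1(L)$ nontrivial is handled by looking at a deck transformation $\gamma$ with axis-like behavior: $\gamma$ preserves $L_0$, $\ft$ commutes with $\gamma$, and analyzing the joint action of $\langle\ft,\gamma\rangle$ on the product of leaf spaces pins down a fixed point or a periodic orbit of some $\ft^k\gamma^m$.

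**Main obstacle.** The delicate point is guaranteeing that a fixed stable leaf actually meets a fixed unstable leaf — i.e., upgrading "fixes a leaf of each foliation" to "fixes a point." In a plane foliated by two transverse $1$-foliations a fixed leaf of each need not cross, so one genuinely needs the bounded-displacement property together with the cocompact $\pi_1(L)$-action to rule out the escaping configurations (a fixed stable leaf with all nearby unstable leaves "sliding off to infinity"). I expect this to require a careful Poincaré--Bendixson / Brouwer plane-translation argument, exactly of the flavor used in the doubly invariant case of \cite{BFFP}; the bounded displacement in $L_0$ (Remark~\ref{r.boundedmovement}) is what makes it work, since it prevents $\ft|_{L_0}$ from being a fixed-point-free translation-like map. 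Once the contractible periodic point is produced, Theorem~\ref{teo.nocontractible} closes the argument immediately.
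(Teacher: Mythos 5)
Your plan inverts the logical flow of the actual argument, and the inversion exposes a genuine gap. You propose to show that bounded displacement of $\ft|_{L_0}$ forces a fixed stable leaf inside $L_0$, hence a fixed point, hence a contractible fixed point contradicting Theorem~\ref{teo.nocontractible}. But bounded displacement in $L_0$ does \emph{not} force a fixed leaf of $\widetilde\cW^s_{L_0}$: the leaf space is a (possibly non-Hausdorff) simply connected $1$-manifold carrying no metric that is controlled by intrinsic displacement in $L_0$, and a fixed-point-free action with bounded displacement is entirely possible --- indeed that is precisely the situation here. The paper derives the \emph{opposite} of your claim: since by Lemma~\ref{fmin} there are no contractible periodic points, $\ft$ cannot fix any stable leaf of $L_0$ (a fixed stable leaf would contain an $\ft$-fixed point by the stable contraction), so $\ft$ acts \emph{freely} on $\cL^s_{L_0}$ and one invokes the axis theory for free actions on simply connected non-Hausdorff $1$-manifolds (Lemma~\ref{l.cyclicstab}). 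Your ``Brouwer-type argument rules out fixed-point-free translation-like behavior'' step has no valid mechanism and its conclusion is in fact false in this setting.

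Your fallback for the case $\pi_1(L)\neq\{id\}$ --- pinning down a fixed point of $\gamma^m\circ\ft^k$ and invoking Theorem~\ref{teo.nocontractible} --- also does not close the argument, because such a point is not a contractible periodic point when $m\neq 0$ (a contractible periodic point is a fixed point of $\ft^k$ itself, not of $\gamma^m\circ\ft^k$). The paper \emph{does} produce a stable leaf $s_1$ fixed by $h=\gamma^n\circ\ft^m$ with $m>0$, and hence an $h$-fixed point; but the contradiction does not come from Theorem~\ref{teo.nocontractible}. Instead, after establishing Gromov hyperbolicity of the leaves (Corollary~\ref{cor.gromov} plus Candel uniformization), one shows that $h$ expands points along rays of $s_1$ at hyperbolic speed, that $s_2=\ft^m(s_1)$ stays within bounded width of $s_1$, and that an $h$-fixed unstable leaf $u_1$ trapped between them is forced to be \emph{coarsely contracted} by $h$ --- contradicting the expansion of unstable length under $\ft^m$ when $m>0$. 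So the roles of Theorem~\ref{teo.nocontractible} (an early tool to get free action, not the terminal contradiction) and of the partial-hyperbolicity/geometry argument (the actual endgame) are exactly reversed relative to what you envisioned, and there is no way to substitute one for the other. You also omit the reduction (Lemma~\ref{l.onenotplane}) that secures a fixed leaf with stabilizer exactly $\ZZ$, which your ``pick a $\gamma$ with axis-like behavior'' step tacitly requires.
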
 

The proof of this proposition is deferred to section \ref{s.FL}. 
This will prove that Case I cannot happen. We mention here that there is a simpler proof of Proposition \ref{p.fixedleaf} communicated to us by 
Andy Hammerlindl \cite{HRHU} using an old result from Mendes
\cite{Mendes}. Both proofs work without any assumption on the topology of $M$ other than having that $\pi_1(M)$ is not solvable.
We present our proof here in order to have a complete proof of Theorems 
B and C. In addition our proof exemplifies, in a simplified context, some of the technical results of \cite[Section 4]{BFFP} (which are simplified by the fact that 
leaves are subfoliated by strong stables and unstables while in \cite{BFFP} they are foliated by strong stables and have some center foliation whose dynamics is harder to deal with). 

\vskip .15in
\noindent
{\bf {Case II}} $-$ $\ft$ does not fix any leaf of $\widetilde \Lambda^{su}$.

The analysis of this case will be divided in two subcases that will be proven in section \ref{s.TR}.  The fact that it can be divided into these subcases needs $M$ to be hyperbolic so that Theorem \ref{teo.clasif} applies. Even if the dichotomy holds (without assuming that $M$ is hyperbolic), we need $M$ to be hyperbolic to treat one of the cases (the double translation).

Up to a finite cover and iterate, we may assume that $f$ leaves
invariant branching foliations $\fbs, \fbu$.
By Theorem \ref{teo.clasif}, one considers the action of $\ft$ on
the leaf spaces of $\wfbs, \wfbu$: either $f$ is a discretized Anosov flow ($\ft$ fixes
every leaf of both $\wfbs, \wfbu$), or $\ft$ acts as a translation
on both leaf spaces, called {\em double translation}. 
Recall that $\ft$ does not fix a leaf of $\widetilde \Lambda^{su}$ if
and only if $\widetilde h$ acts as a translation on the
leaf space of $\widetilde \fol$.
Notice that in this particular case
there are three foliations: $\fbs, \fbu$ and $\fol$.
There are actions on the leaf spaces of each of these three
foliations.
Here we will
prove:

\begin{proposition}\label{p.transl1}
Suppose that $\pi_1(M)$ is not virtually solvable.
If $\cF$ is $\RR$-covered and uniform and $\widetilde h$ acts as a translation on 
the leaf space of $\widetilde \fol$
then $f$ cannot be a discretized Anosov flow. 
\end{proposition}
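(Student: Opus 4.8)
The plan is to argue by contradiction. Suppose that $f$ is a discretized Anosov flow while at the same time $\cF$ is $\RR$-covered and uniform with $\widetilde h$ acting as a translation on the leaf space $\cL_{\widetilde \cF}$. Since $f$ is a discretized Anosov flow, its good lift $\ft$ fixes every leaf of the center foliation $\cF^c$ lifted to $\mt$, and more to the point $\ft$ moves points a bounded amount but \emph{along} center leaves which are orbits of a topological Anosov flow $\Phi$; in particular $\ft$ has fixed points (the lifts of periodic orbits give rise to axes on which $\ft$ acts, and in fact $\ft$ fixes each lifted center leaf). The first step is to make precise the tension between these two structures: the lamination $\Lambda^{su}$ (hence $\cF$) is made up of leaves saturated by strong stable and strong unstable manifolds, and we have good geometric control coming from $\cF$ being $\RR$-covered and uniform — any two leaves of $\widetilde \cF$ are a bounded Hausdorff distance apart.

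The key steps, in order, would be: (i) recall that a discretized Anosov flow is dynamically coherent with center foliation equal to the orbit foliation of a topological Anosov flow $\Phi$, and that $\ft$ fixes every leaf of $\widetilde{\cF^c}$; (ii) use that $\Lambda^{su}$ is tangent to $E^s\oplus E^u$, so each leaf $L$ of $\widetilde\Lambda^{su}$ is a union of strong stable and strong unstable leaves, and the center foliation is transverse to $\Lambda^{su}$ — combined with Theorem \ref{t.HHU} item (3), center leaves inside a complementary region $W$ are compact segments crossing $W$; (iii) observe that since $\widetilde h$ translates the leaf space of $\widetilde\cF$, no leaf of $\widetilde\Lambda^{su}$ is fixed by $\ft$, yet $\ft$ must permute the (countable, discrete inside $\cL_{\widetilde\cF}$) set of leaves of $\widetilde\Lambda^{su}$ as a translation; (iv) derive a contradiction by tracking a center leaf. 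Because $\ft$ fixes each center leaf $\widetilde c$ (it is a discretized Anosov flow) but moves the leaf of $\widetilde\Lambda^{su}$ through a given point strictly in the transverse direction, a center leaf that starts on a leaf $L\in\widetilde\Lambda^{su}$ is sent by $\ft$ to a center leaf on $\ft(L)\ne L$; but $\ft(\widetilde c)=\widetilde c$, so $\widetilde c$ meets both $L$ and $\ft(L)$. Since $\ft$ translates the leaf space, iterating shows $\widetilde c$ meets infinitely many leaves $\ft^n(L)$ of $\widetilde\Lambda^{su}$ that escape to infinity in $\cL_{\widetilde\cF}$, so $\widetilde c$ is a center leaf (an orbit of $\widetilde\Phi$) crossing every leaf of $\widetilde\cF$ in a bounded-distance-apart family — i.e., $\widetilde c$ is, up to reparametrization, a complete transversal, forcing $\Phi$ to be (quasi-)regulating for the uniform $\RR$-covered foliation $\cF$. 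Then Theorem \ref{teo.AnosovFlows} applies and yields that $\pi_1(M)$ is virtually solvable, contradicting the hypothesis.

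The main obstacle I expect is step (iv): upgrading ``a single center leaf crosses all leaves of $\widetilde\cF$'' to ``$\Phi$ is regulating for $\cF$'' in the precise sense required by Theorem \ref{teo.AnosovFlows}. One has to promote the behaviour of the one distinguished center leaf $\widetilde c$ to \emph{all} orbits of $\widetilde\Phi$, which uses the transitivity/minimality built into a topological Anosov flow (density of periodic orbits, or the fact that the stable and unstable foliations of $\Phi$ are minimal in the non-wandering case) together with the $\pi_1(M)$-equivariance to spread the property around; it also requires ruling out that $\widetilde c$ re-intersects a leaf of $\widetilde\cF$ (handled by Novikov, since $\cF$ has no compact leaves and $\widetilde c$ is transverse to $\cF$ — here one must check transversality of center to $\cF$, which follows because $\cF$ extends $\Lambda^{su}$ which is tangent to $E^s\oplus E^u$ transverse to $E^c$, at least on a neighborhood of $\Lambda^{su}$, and inside complementary $I$-bundle regions the center is a product by item (2)). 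A secondary technical point is that $\cF$ is only the \emph{completed} foliation, not $f$-invariant, so one should phrase the bounded-displacement and translation statements in terms of $\widetilde h$ (via Corollary \ref{lami}) rather than $\ft$, and then transfer back to $\ft$ using that $h$ and $f$ agree on $\Lambda^{su}$ and that the center leaves of the discretized Anosov flow structure are genuinely $f$-invariant.
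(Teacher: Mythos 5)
Your plan is a correct alternative route, and in fact the paper explicitly acknowledges it: right after setting up the contradiction it remarks that ``one could apply Theorem~\ref{teo.AnosovFlows} if one showed that the flow $\Phi_t$ is regulating for $\cF$. This can be done, but we chose to give a more direct proof.'' The paper's actual argument does not prove regularity. Instead it invokes a dichotomy for Anosov flows from \cite{Fen1998}: either $\Phi$ is topologically a suspension (whence $\pi_1(M)$ is solvable and we are done at once), or there are periodic orbits $\eta_1,\eta_2$ of $\Phi$ that are freely homotopic with opposite orientations. Lifting the free homotopy, one gets $\widetilde x_1,\widetilde x_2$ a bounded distance apart so that $\ft^n(\widetilde x_1)$ stays within bounded distance of $\ft^{i_n}(\widetilde x_2)$ with $i_n\to-\infty$. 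Then it uses the slithering structure of a minimal uniform $\RR$-covered foliation (Thurston \cite{Th2}, after collapsing $\fol$ to a minimal $\cE$) to get a lower bound on distances in $\mt$ in terms of separation in the leaf space: since $\widetilde h$ translates, $\ft^n(\widetilde x_1)$ and $\ft^{i_n}(\widetilde x_2)$ lie in leaves whose slithering distance diverges, contradicting the uniform bound. This is a self-contained version of the same circle of ideas that underlies the proof of Theorem~\ref{teo.AnosovFlows}, and avoids having to verify the regulating condition.

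Two comments on your version. First, the ``obstacle'' you anticipate in step~(iv) -- upgrading from one distinguished center leaf to all orbits -- is not actually serious: the same argument applies verbatim to \emph{every} center leaf, since every point either lies in $\Lambda^{su}$ or lies on a center segment crossing a complementary $I$-bundle, and in either case one can pick a base leaf $L_0\in\widetilde\Lambda^{su}$ through that center leaf and iterate. You do not need transitivity or density of periodic orbits for this. Second, there is a small slip when you say ``in particular $\ft$ has fixed points'': the good lift of a discretized Anosov flow satisfies $\ft(x)=\widetilde\Phi_{\tau(\pi(x))}(x)$ with $\tau>0$, and orbits of $\widetilde\Phi$ in $\mt$ are properly embedded lines, so $\ft$ has \emph{no} fixed points (indeed Lemma~\ref{fmin} says $f$ has no contractible periodic points). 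What you mean, and what your argument actually uses, is that $\ft$ fixes every lifted center leaf, which is correct.
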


As stated, 
this proposition only requires the manifold not to have (virtually) solvable fundamental group (see \cite{Hammerlindl} for a study of this case). The next result however really uses that the manifold is hyperbolic. 

\begin{proposition}\label{p.transl2}
Suppose that $M$ is a hyperbolic 3-manifold.
If $f$ is a double translation on 
$\wfbs, \wfbu$ then $\widetilde h$ cannot act as a translation on 
$\widetilde \fol$.
\end{proposition}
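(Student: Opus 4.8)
The plan is to argue by contradiction: suppose $M$ is hyperbolic, $f$ is a double translation on $\wfbs,\wfbu$, and $\wt h$ also acts as a translation on $\wt\fol$, where $\fol$ is the $\RR$-covered uniform foliation extending $\Lambda^{su}$. First I would invoke Theorem~\ref{teo.regul} to produce a regulating pseudo-Anosov flow $\Phi_\fol$ transverse to $\fol$. The key tension to exploit is that the leaves of $\fol$ (away from the complementary $I$-bundle regions) are leaves of $\Lambda^{su}$, hence tangent to $E^s\oplus E^u$ and subfoliated by the strong stable and strong unstable foliations $\cW^s,\cW^u$. So inside each such leaf $L$ we have two transverse one-dimensional foliations coming from the ambient partial hyperbolicity, and the good lift $\ft$ (which agrees with $\wt h$ on $\widetilde\Lambda^{su}$ up to the homotopy of Lemma~\ref{l.homot}) moves these leaves in a translation-like fashion along the transversal direction given by $\widetilde\Phi_\fol$.

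The heart of the argument should be to bootstrap the double-translation structure on $\wfbs,\wfbu$ together with the translation on $\wt\fol$ into a leaf conjugacy between $f$ and a topological Anosov flow — this is precisely the strategy outlined in subsection~\ref{ss.DAF} and parallels the dynamically-coherent classification in \cite{BFFP}. Concretely: the branching center-stable foliation $\fbs$, being a translation, has Hausdorff $\RR$-covered leaf space, and its intersection with the leaves of $\Lambda^{su}$ must organize the strong unstable foliation inside those leaves; similarly for $\fbu$. One then shows the center bundle is (uniquely) integrable in a way compatible with $\fol$ and with $\Phi_\fol$, and that the resulting center foliation is the orbit foliation of a topological Anosov flow $\Psi$ which is \emph{also} regulating for the uniform $\RR$-covered foliation $\fol$. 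At that point Theorem~\ref{teo.AnosovFlows} forces $\pi_1(M)$ to be virtually solvable, contradicting $M$ hyperbolic. Alternatively, if the double-translation case genuinely refuses to be dynamically coherent, I would instead derive the contradiction from Proposition~\ref{conjugate} about quasigeodesic pseudo-Anosov flows: the regulating flow $\Phi_\fol$ would have to be, in a suitable sense, conjugate or compatible with the dynamics forced by the double translation, and this compatibility again collapses the manifold to one with solvable fundamental group.

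The main obstacle I anticipate is the step producing the topological Anosov flow (or the flow to which Proposition~\ref{conjugate} applies) from the double-translation data when dynamical coherence is \emph{not} assumed — this is exactly where branching of $\fbs,\fbu$ could in principle obstruct the construction, and it is the technically heaviest part of the parallel arguments in \cite{BFFP}. I would handle it by working in the universal cover, using the approximating foliations $\fes,\feu$ and the maps $\hs,\hu$ from Theorem~\ref{teo-BI} to transfer statements between the branching foliations and genuine foliations, and using the fact (from Theorem~\ref{teo.clasif}, item~(\ref{it.2})) that a double translation already shares the fine structure of the \cite{BGHP} examples, so that the leaf spaces of $\wfbs,\wfbu$ are Hausdorff and $\ft$ acts minimally enough to pin down the center dynamics. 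Once the regulating topological Anosov flow is in hand, the conclusion is immediate from Theorem~\ref{teo.AnosovFlows} together with Remark~\ref{rem.toriimplysolv}, since hyperbolicity of $M$ rules out virtually solvable fundamental group.
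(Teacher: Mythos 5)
Your overall framing is right — bring in the regulating pseudo-Anosov flows and ultimately invoke Theorem~\ref{teo.AnosovFlows} and Proposition~\ref{conjugate} — but the mechanism you propose for the contradiction does not match, and in one place actually conflicts with, what the setup allows. You suggest building a topological Anosov flow $\Psi$ regulating $\fol$, with orbit foliation coming from the center bundle, and then applying Theorem~\ref{teo.AnosovFlows}. But by Theorem~\ref{teo.clasif}, the double-translation case is exactly the dynamically incoherent branch of the dichotomy, so there is no reason to expect a genuine center foliation giving a topological Anosov flow; and the regulating flows $\Phi_{cs},\Phi_{cu},\Phi_{su}$ furnished by Theorem~\ref{teo.regul} are only pseudo-Anosov, so they may (and, as the argument shows, must) have $p$-prong singularities with $p\geq 3$. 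Your fallback — that compatibility of $\Phi_{su}$ with the double-translation dynamics ``again collapses the manifold to one with solvable fundamental group'' — is not what happens; compatibility of the flows does not by itself constrain $\pi_1(M)$.

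The actual contradiction in the paper is a singularity-count mismatch, and this is the key idea your proposal is missing. One first uses Proposition~\ref{fixed} (contractible fixed points $\gamma^i\circ\ft^k(x)=x$ coming from periodic orbits of $\Phi_{cs}$) together with a Lefschetz-index comparison between $\gamma^i\circ\ft^k$ and the flow-holonomy map $\gamma^i\circ\mu$ on a leaf of $\widetilde\Lambda^{su}$ to show that every periodic orbit of $\Phi_{cs}$ is freely homotopic to a periodic orbit of $\Phi_{su}$; Proposition~\ref{conjugate} then gives a topological equivalence $\Phi_{cs}\simeq\Phi_{su}$ homotopic to the identity. Then Proposition~\ref{nonsing} shows, by an index and ideal-boundary argument inside a fixed leaf $L$ of $\widetilde\Lambda^{su}$ (using that $L$ carries \emph{both} strong stable and strong unstable foliations), that the $\Phi_{su}$-orbit produced this way is always \emph{regular} with index $-1$. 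Finally: if $\Phi_{cs}$ had no singularities it would be a topological Anosov flow regulating $\fol$, so Theorem~\ref{teo.AnosovFlows} forces virtually solvable $\pi_1(M)$, impossible; if $\Phi_{cs}$ has a $p$-prong ($p\geq 3$), the orbit equivalence sends it to a singular orbit of $\Phi_{su}$, contradicting Proposition~\ref{nonsing}. Without the index/prong comparison your proposal has no way to rule out the singular case, which is the only case that can actually occur when $M$ is hyperbolic.
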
 

This treats all possibilities and therefore these propositions complete the proof of Theorem B (and as a consequence proves Theorem A, see subsection \ref{ss.ergodicity}).

\section{Gromov hyperbolicity, and contractible fixed points}

Before we begin the proof of Theorem B, we obtain some 
general results that will be useful later. The main point of this section is to show that leaves of $\Lambda^{su}$ are Gromov hyperbolic so that Candel's uniformization theorem \cite{Candel} applies. We remark that this property is quite direct in hyperbolic 3-manifolds as they are atoroidal (see e.g. \cite[Chapter 7]{Calegari}), so the reader can skip this section if it is only interested in the hyperbolic 3-manifold case. This section also shows Lemma \ref{fmin} which allows to consider finite lifts and remain being non-wandering to apply Theorem \ref{teo.nocontractible}. This is also not needed in the case where $M$ is a hyperbolic 3-manifold where Theorem \ref{teo.nocontractible} holds without any further assumption.  

\subsection{A general result about foliations with transverse invariant measures} 

 The following is a general result about minimal foliations in 3-manifolds. We have not found this statement in the literature, so we give a proof; but it is likely to be known by the experts.  Raul Ures has informed us that he also has a proof of some parts of this result
\cite{Ures}.  We refer the reader to \cite[Chapters 11 and 12 Book I]{CandelConlon} for background on transverse invariant measures and growth of leaves. 
In the proof of the next theorem
we will also use some standard results from foliations theory in dimension 3 that can be found e.g. in \cite[Chapter 9 Book 2]{CandelConlon}. 

\begin{theorem}\label{minimality}
Let $\fol$ be a minimal 
codimension one foliation in a closed $3$-manifold $M$ admitting a holonomy invariant transverse measure $\nu$. Then, the foliation is $\mathbb{R}$-covered and uniform 
 and one of the following options holds for $\cF$:
\begin{itemize}
\item If there is a planar leaf in $\fol$, then $M$ is the 
$3$-dimensional torus;
\item If there is a leaf which is an annulus or M\"{o}bius band, then $M$ is a nil-manifold (which could be the $3$-dimensional torus);
\item Otherwise in the universal cover $\mt$ all the leaves
are uniformly Gromov hyperbolic.
\end{itemize}
If moreover $\cF$ is transversely orientable, then all leaves are pairwise homeomorphic.
\end{theorem}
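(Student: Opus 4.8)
The strategy is to exploit the holonomy invariant transverse measure $\nu$ to show first that $\cF$ is $\R$-covered and uniform, and then to run a dichotomy on the growth type of the leaves. First I would recall the classical structure theory: a minimal foliation with a holonomy invariant transverse measure cannot contain a Reeb component, so by Novikov's theorem $\cF$ is taut, $\pi_1$ of each leaf injects in $\pi_1(M)$, and each leaf lifts to a properly embedded plane in $\mt$. The transverse measure $\nu$ gives rise, via integration of the holonomy cocycle, to a continuous map $h\colon \mt \to \R$ which is monotone along every transversal and constant on every leaf of $\wt\cF$; since $\cF$ is minimal and $\nu$ has full support (minimality forces this), $h$ descends to a homeomorphism of the leaf space $\cL_{\wt\cF}$ onto $\R$, proving that $\cF$ is $\R$-covered. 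Uniformity I would then derive from the existence of the transverse measure together with $\R$-coveredness: the measure gives a canonical way to parametrize the leaf space, and one shows that two leaves at bounded $h$-distance stay at bounded Hausdorff distance using compactness of $M$ and the fact that the transverse measure of a transversal joining them is uniformly bounded; alternatively one invokes the known fact (Thurston, and \cite[Chapter 9 Book 2]{CandelConlon}) that an $\R$-covered foliation with a transverse invariant measure is uniform because the measure prevents the pathologies that destroy uniformity.

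Once $\cF$ is $\R$-covered and uniform I would pass to the statement that all leaves are homeomorphic: in an $\R$-covered foliation the leaf space is $\R$ and the monodromy of the foliation over this line moves any leaf to any other by a homeomorphism of $M$-bounded distortion, and combined with tautness (so leaves are incompressible) one gets that all leaves of $\wt\cF$ are homeomorphic planes, hence all leaves of $\cF$ are pairwise homeomorphic surfaces. Now comes the growth dichotomy. Because $\cF$ carries a holonomy invariant transverse measure, a standard argument (Plante) shows that $\nu$-a.e.\ leaf, hence by minimality every leaf, has subexponential growth — indeed polynomial growth of degree at most $2$. A surface of polynomial growth is either (a) the plane $\R^2$ or cylinder, giving a leaf of growth degree $0$ or $1$, or (b) a surface with exactly one or two ends and at most linear/quadratic growth. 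If some leaf is planar, then by \cite{CandelConlon} (the classification of $\R$-covered foliations with a planar leaf, building on Palmeira and Rosenberg) $M$ is $\T^3$; if some leaf is an annulus or M\"obius band then $M$ is a nilmanifold (a Heisenberg nilmanifold), again by the structure theory of taut foliations with a transverse measure and a non-hyperbolic leaf of linear growth. In the remaining case every leaf has quadratic growth and infinitely many ends is excluded, so by Candel's uniformization \cite{Candel} — or directly, since a simply connected surface of quadratic growth that is not $\R^2$ admits a complete hyperbolic metric with radius of injectivity bounded below — the universal covers of the leaves are uniformly Gromov hyperbolic, where uniformity of the hyperbolicity constant follows from compactness of $M$ and the continuity of the leafwise metric.

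The main obstacle I expect is the passage from "subexponential (polynomial) growth of leaves" to the clean trichotomy plane / cylinder / uniformly hyperbolic, and in particular identifying $M$ in the first two cases. The growth estimate from Plante's theory only directly gives that some leaf is non-exponentially growing; upgrading to all leaves uses minimality, but then one must rule out, say, a leaf which is a plane while another is a cylinder, which is where the homeomorphism-type statement (all leaves homeomorphic) does the work — so the logical order matters and I would prove the "all leaves homeomorphic" claim before the trichotomy. Identifying $M$ as $\T^3$ or a nilmanifold is then a citation to the classification of $\R$-covered foliations (or, equivalently, of codimension-one taut foliations with a transverse measure) with a leaf of polynomial growth of degree $\le 1$, which I would attribute to the combined work surveyed in \cite{CandelConlon} and \cite{Calegari}. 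The uniformity of the Gromov hyperbolicity constant in the last case is routine from compactness but should be stated carefully, since it is exactly what makes Candel's uniformization applicable leafwise in a controlled way later in the paper.
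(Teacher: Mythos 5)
Your opening steps (Novikov/Palmeira to get plane leaves in $\mt$, full support of $\nu$ from minimality) match the paper, and your ``cocycle map'' $h\colon\mt\to\RR$ is a legitimate alternative device for $\RR$-coveredness. But as stated it has a gap you should close: $h$ being constant on leaves and strictly increasing along transversals does \emph{not} by itself make it injective on the leaf space $\cL_{\wt\cF}$, because two non-separated leaves $L_1\neq L_2$ necessarily have $h(L_1)=h(L_2)$. One needs an extra full-support argument to rule out branching. The paper avoids this by working with a fixed transverse $1$-dimensional foliation $\tau$, a finite atlas of product charts giving two-sided bounds $0<a\le\nu(\text{chart transversal})\le b$, and then an open-closed argument for the set of $\widetilde\tau$-leaves reaching a nearby leaf $E$; the fact that unbounded $\widetilde\tau$-segments would have unbounded $\nu$-measure (since at most length $b$ fits in any one chart) is what closes the argument and simultaneously gives uniformity. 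Your ``all leaves homeomorphic'' step is too vague as written: there is no canonical ``monodromy'' carrying one leaf of a general $\RR$-covered foliation to another, and in fact the statement is false without transverse orientability (the paper records a counterexample built from $S^2\times S^1$ and an involution). The correct mechanism is the one the paper uses: transverse orientability plus the $\nu$-measure of $\widetilde\tau$-segments forces the holonomy map $\varphi\colon L\to E$ along $\widetilde\tau$ to commute with every $\gamma\in\mathrm{Stab}(L)$, so $\pi(L)\cong\pi(E)$, and minimality extends this to all leaves.

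The genuine error is in the trichotomy. You invoke Plante to claim that the transverse invariant measure forces every leaf to have polynomial (at most quadratic) growth, and then classify by growth. This is the wrong direction of Plante's theorem: having a leaf of subexponential growth \emph{produces} a transverse invariant measure, but a transverse invariant measure does \emph{not} force subexponential growth of leaves. Indeed Candel's uniformization dichotomy says only that failure of uniform Gromov hyperbolicity yields a transverse invariant measure; it is not an equivalence, and the third bullet of the theorem is exactly the case of a transverse invariant measure coexisting with conformally hyperbolic leaves. If your growth claim were correct that case would be vacuous, which would also collapse the way the paper later uses Theorem~\ref{minimality} inside Proposition~\ref{grhyp}. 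Your own formulation of Case~3 is internally inconsistent as a consequence: a simply connected leaf of quadratic growth (with the induced leafwise metric) cannot be Gromov hyperbolic. The paper's trichotomy is by contrast entirely topological on the leaf: a planar leaf gives $M=\TT^3$ by Rosenberg; an annulus or M\"obius leaf (M\"obius is ruled out by a density/compactness argument, and for an annulus one cuts and pastes a transverse torus cross-section, so $M$ fibers over $S^1$ with torus fiber, and the solv case is excluded by citing Appendix~B of \cite{HP-Nil}); otherwise leaves are neither elliptic nor parabolic, hence all conformally hyperbolic, and Candel's continuous uniformization gives the uniform Gromov hyperbolicity.
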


If the transverse orientability hypothesis is not fulfilled, one can obtain 
it via taking a double cover (which will not affect the existence of a holonomy invariant transverse measure). Minimality of the foliation after 
a double lift is not immediate, we explain how to obtain this along the
way.

We divide the proof into several lemmas.

First, notice that minimality implies that there are no compact leaves, in particular no Reeb components. 
In addition the foliation does not have sphere or projective plane
leaves.
Therefore one can apply Novikov's and Palmeira's (see \cite{CandelConlon}) theorems to show that $\mt$ is diffeomorphic to $\RR^3$ and leaves of $\tilde \fol$ are properly embedded planes in $\mt$. 

Also, thanks to \cite[Lemma 3.3]{CalegariSM} one can without loss of generality assume that leaves of $\fol$ are smoothly immersed and with immersions varying continuously in the $C^\infty$ topology. This is obtained after a global isotopy of the original foliation which does not affect minimality, the existence of a holonomy invariant transverse measure nor the topological type of the foliation. If one picks a Riemannian structure on $M$ one can 
consider the
unit vector field orthogonal to $T\fol$. This vector field is not
necessarily $C^1$, but can be approximated arbitrarilily close by
a smooth vector field. Pick one such smooth vector field,
 and consider the one dimensional foliation $\tau$ obtained by integrating 
this vector field. 
 
 Cover $M$ by finitely many charts of $\fol$ which are
also foliated charts for the one dimensional foliation
$\tau$. In each chart any transverse arc from the
``bottom" leaf of $\fol$ to the ``top" leaf of $\fol$
has exactly the same measure under $\nu$, by the holonomy
invariance. In addition this measure is not zero, because
of the minimality of $\fol$. The support of the measure
has to intersect the interior of this chart.
Hence there is a minimal $a > 0$ measure of transverse arcs
for any of the elements of this finite family of charts
and also a maximum $b > 0$. 

Let $\wcF$, $\widetilde \tau$ the lifts to
$\mt$. We first establish the fact that the foliation is $\RR$-covered. The proof will help to show that all leaves are homeomorphic in a double cover. 

\begin{lemma} \label{rcov}
The foliation $\fol$ is $\RR$-covered and uniform. 
\end{lemma}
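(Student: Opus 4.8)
The plan is to show that the leaf space $\cL_{\wcF}$ is Hausdorff, i.e.\ that it cannot contain a pair of non-separated leaves. Suppose for contradiction that $L, L'$ are two distinct leaves of $\wcF$ which are non-separated from each other in the leaf space. The standard structure theory of codimension one foliations (see \cite[Chapter 9 Book 2]{CandelConlon}, also \cite{Calegari}) tells us that non-separated leaves come in a very rigid configuration: there is a sequence of leaves $L_n \to L$ and $L_n \to L'$ with $L_n$ on the ``same side'' for all $n$, and crucially, by a theorem going back to work around branching of leaf spaces, the leaves $L$ and $L'$ (or translates of them under $\pi_1(M)$) must be invariant under a common nontrivial deck transformation $\gamma$, and the quotient $L/\langle\gamma\rangle$ is a compact leaf-like object in an intermediate cover. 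I would use the transverse invariant measure $\nu$ to derive a contradiction from this configuration.

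First I would set up the contradiction concretely: pick a transversal arc realizing the non-separation, i.e.\ a short transversal $\sigma$ crossing the region where $L_n$ accumulates, and track the $\nu$-measure of sub-arcs. The key quantitative input is already in the excerpt: every transverse arc crossing a chart from bottom to top has $\nu$-measure between the uniform bounds $a>0$ and $b>0$, and $\nu$ has no atoms on leaves (by minimality the support meets every chart interior, and holonomy invariance forces the measure to spread out). Now, since $L$ and $L'$ are non-separated, one can find a transversal arc from (near) $L$ to (near) $L'$ that, when pushed down into $M$, wraps around following the accumulating leaves $L_n$; the holonomy around the relevant loop $\gamma$ preserves $\nu$, but comparing the $\nu$-measure accumulated along the pieces between consecutive $L_n$ forces an infinite total measure on a compact transversal — a contradiction — unless the $L_n$ actually converge, i.e.\ unless the branching does not occur. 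More carefully: the holonomy invariant measure gives a $\pi_1(M)$-equivariant (hence bounded-distortion) way to measure transversals, and a branching locus in $\cL_{\wcF}$ would produce, via the deck transformation fixing the non-separated leaves, a transversal whose $\nu$-length is both finite (it is a compact arc downstairs) and a sum of infinitely many terms each bounded below by a fixed multiple of $a$ — impossible.

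Concretely the argument I would write is: let $\gamma \in \pi_1(M)$ be the deck transformation associated to the non-separated pair (its existence is the standard fact about branching in leaf spaces of taut foliations, cf.\ \cite{CandelConlon, Calegari}); then $\gamma$ fixes a properly embedded transversal ray, and the images under $\gamma^n$ of a fixed compact transversal segment $J$ of positive $\nu$-measure $\nu(J) \geq a$ are pairwise disjoint sub-segments of a single compact transversal $T$ in $M$, giving $\nu(T) \geq \sum_{n} \nu(\gamma^n J) = \infty$, contradicting finiteness of $\nu$ on compact transversals. Hence $\cL_{\wcF}$ is Hausdorff, so it is a simply connected Hausdorff one-manifold, therefore homeomorphic to $\RR$, and $\fol$ is $\RR$-covered.

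The main obstacle I anticipate is making the ``there is a deck transformation fixing the non-separated leaves'' step fully rigorous and extracting from it a genuinely compact transversal in $M$ on which the measure blows up — the subtlety is that non-separated leaves need not themselves be periodic, only that the configuration of branching is acted on by $\pi_1(M)$ in a way that, after possibly passing to the branching locus and using that there are only finitely many orbits of branching directions (a consequence of compactness of $M$ together with the measure bound $a>0$), yields the periodicity. An alternative route, which might be cleaner to write, is to avoid naming $\gamma$ and instead argue directly that the function ``$\nu$-distance to a fixed basepoint leaf'' descends to a well-defined, proper, monotone map $\cL_{\wcF} \to \RR$: holonomy invariance makes it well-defined independent of the transversal chosen, the bounds $a,b$ make it proper and locally bi-Lipschitz, and a proper monotone injection of a simply connected one-manifold into $\RR$ forces the domain to be Hausdorff. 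I would likely present this second version, as it sidesteps the branching-locus bookkeeping and uses only the measure bounds already established in the preceding paragraphs of the proof.
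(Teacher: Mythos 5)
Your proposal correctly identifies the key measure-theoretic input (the uniform lower bound $a>0$ on the $\nu$-measure of chart-crossing transversals, and the finiteness of $\nu$ on compact transversals), but both routes you sketch have a genuine gap at the place where Hausdorffness is actually established, and neither closes it.

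The first route rests on the claim that non-separated leaves $L, L'$ of $\wcF$ must be simultaneously invariant under a common nontrivial deck transformation $\gamma$. This is not a general fact about taut foliations; branching leaves need not be periodic, and you yourself flag this as the main obstacle. Without that periodicity the $\gamma^n J$ argument never gets off the ground, so the route as written is incomplete.

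The second route has a circularity problem. Define $\Phi:\cL_{\wcF}\to\RR$ as the signed $\widetilde\nu$-length of a transversal path back to a basepoint leaf $L_0$; by holonomy invariance in the simply connected $\mt$ this is well-defined and continuous, and the bounds $a,b$ do make it proper and locally injective on each branch. But if $L\ne L'$ are non-separated, then choosing $L_n\to L$ and $L_n\to L'$ gives $\Phi(L)=\lim\Phi(L_n)=\Phi(L')$, so $\Phi$ fails to be injective at exactly the points where Hausdorffness could fail. The statement ``a proper monotone injection of a simply connected one-manifold into $\RR$ forces Hausdorffness'' is true, but the hypothesis ``injection'' is equivalent to what you are trying to prove; the measure bounds alone do not give it to you.

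The paper avoids both of these pitfalls by not attempting to rule out branching abstractly. Instead it fixes a leaf $L$ and a nearby leaf $E$, considers the partially defined holonomy map $\varphi$ along the transversal foliation $\widetilde\tau$ from $L$ to $E$, and shows its domain $D\subset L$ is all of $L$ with the $\widetilde\tau$-segments from $L$ to $E$ of uniformly bounded length. The key step is yours in spirit: a segment of unbounded length, projected to $M$, crosses arbitrarily many foliation charts, hence has $\nu$-measure $\gg a\cdot(\text{number of crossings})\to\infty$, contradicting holonomy invariance (all such segments from $L$ to $E$ have the same $\widetilde\nu$-measure). This produces a saturated product region $W$ with leaf space a closed interval; minimality makes deck translates of $W$ cover $\mt$, and overlapping translates glue to give leaf space $\RR$. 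So the measure-blowup contradiction you wanted is applied to unbounded transversal segments, not to a hypothetical branching locus, and this is what lets the argument close.
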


\begin{proof}
Start with a leaf $L$ of $\wcF$ and fix a point $x$ in $L$.
Choose a near enough leaf $E$ of $\wcF$ so that the leaf
of $\widetilde \tau$ through $x$ intersects $E$.
For a point $y$ in $\mt$, let $\widetilde \tau(y)$ be the leaf of $\widetilde \tau$
through $y$.
Let 

$$D = \{ y \in L, \ \ {\rm so \ that} \ \ 
\widetilde \tau(y) \cap E \not = \emptyset \};
\ \ \ {\rm let} \  \varphi: D \rightarrow E,
\ \varphi(y) = \widetilde \tau(y) \cap E.$$

\noindent
Then we claim that $D = L$ and the length of the 
$\widetilde \tau$ segments from $L$ to $E$ have bounded
length.
Clearly $D$ is open. Suppose that $y_i$ are in $D$
converging to $y$ in $L$. Consider the segments
$v_i$ of the foliation $\widetilde \tau$ from
$y_i$ to $\varphi(y_i)$. If these segments
have bounded length, then they are all in a compact
set in $\mt$. Then by the local product structure
of the foliation $\tau$ lifted to $\mt$, it follows
that, up to subsequence, the segments converge
to a segment of the foliation $\widetilde \tau$ from
$y$ to $E$. Hence $y$ is in $D$.

Suppose on the other hand that the length of $v_i$ converges
to infinity. By the holonomy invariance of the measure
$\nu$, it follows that they all have the same measure
($\widetilde \nu$, the lift of $\nu$ to $\mt$).
Projecting to $M$ these are segments of $\tau$ of
length going to infinity. At most $b$ length of these 
segments can be contained in one of finitely many foliated boxes. Hence
the sum of the measures of these
segments is going to infinity,
contradiction.

Therefore the length of $v_i$ is bounded. 
It now follows that $D$ is both open and closed, and so
$D = L$. In addition the length of the $\widetilde \tau$
segments from $L$ to $E$ is bounded.
We stress this fact

\vskip .05in

($\ast$) every $\widetilde \tau$ leaf intersecting $L$ also
intersects $E$, and the length of $\widetilde \tau$ segments
from $L$ to $E$ is bounded.
\vskip .05in

This implies that if $W$ is the region of $\mt$ bounded by
$L, E$ (including $L, E$), then the foliation $\wcF$
restricted to $W$ has leaf space homeomorphic to a closed
interval. 

The same holds for images of $W$ under deck transformations,
that is $\gamma(W)$ where $\gamma$ is in $\pi_1(M)$.
Hence given $\gamma, \beta$ in $\pi_1(M)$,
if $\gamma(W)$ and $\beta(W)$ intersect the foliation $\wcF$ 
has leaf space a closed interval in the union of these two sets.
Since the foliation $\cF$ is minimal, it follows that
the union of deck translates of $W$ cover $\mt$.

This shows that $\fol$ is $\R$-covered.

\vskip .1in
We now show that $\fol$ is uniform. Any two leaves $Z,T$ 
of $\widetilde \cF$ in
$W$ are a finite, bounded distance from each other because
of property $(*)$ above. But we just proved that
the $\pi_1(M)$ translates of $W$ cover $\mt$.
Therefore for any $Z, T$ leaves of $\widetilde \cF$, they
are a finite Hausdorff distance from each other, that is,
$\cF$ is uniform.
This finishes the proof.
\end{proof} 

The property ($\ast$) obtained in the proof is important to obtain:

\begin{lemma} Suppose that $\fol$ is transversely orientable.
Then the leaves of $\fol$ are pairwise homeomorphic.
\end{lemma}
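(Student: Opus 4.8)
The statement to prove is that an $\RR$-covered, transversely orientable, minimal codimension-one foliation $\fol$ with a holonomy invariant transverse measure on a closed $3$-manifold has all its leaves pairwise homeomorphic. The key tool already in hand is property $(\ast)$: for any two leaves $L, E$ of $\wcF$ with $E$ "above" $L$, every $\widetilde\tau$-leaf through a point of $L$ meets $E$, and the $\widetilde\tau$-segments from $L$ to $E$ have uniformly bounded length. Using transverse orientability, $\widetilde\tau$ is coherently oriented, so the holonomy map $\varphi: L \to E$ along $\widetilde\tau$ is well-defined globally.

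The plan is as follows. First I would fix two arbitrary leaves $L_1, L_2$ of $\wcF$. Since $\fol$ is $\RR$-covered, the leaf space is $\RR$ and I can speak of leaves being above or below one another; without loss of generality say $L_1$ is below $L_2$ (or they coincide, in which case nothing to prove). By property $(\ast)$ applied to the pair $L_1, L_2$ (using a chain of intermediate leaves $E$ as in the proof of the previous lemma, or directly), the flow-along-$\widetilde\tau$ map $\varphi: L_1 \to L_2$ sending $y \mapsto \widetilde\tau(y)\cap L_2$ is defined on all of $L_1$. Transverse orientability guarantees this intersection point is unique. The map $\varphi$ is continuous by the local product structure of $\widetilde\tau$, and it has a continuous inverse given by flowing backwards along $\widetilde\tau$ from $L_2$ to $L_1$ — one must check this backward flow is also everywhere defined, which follows by the symmetric statement: every $\widetilde\tau$-leaf meeting $L_2$ in the region $W$ between $L_1$ and $L_2$ also meets $L_1$. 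Actually this is immediate since $W$ has leaf space a closed interval with both $L_1$ and $L_2$ as boundary leaves, so every $\widetilde\tau$-leaf crossing $W$ runs from $L_1$ to $L_2$. Hence $\varphi$ is a homeomorphism $L_1 \to L_2$.

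Next I would descend to the manifold. Any two leaves $F_1, F_2$ of $\fol$ lift to leaves $L_1, L_2$ of $\wcF$; but these lifts need not be nested in the leaf space. However, since $\fol$ is minimal, $F_2$ is dense, so I can pick a sequence of deck translates $\gamma_n L_2$ accumulating on $L_1$ from one side, and for $n$ large $L_1$ lies below $\gamma_n L_2$ (or I directly choose the lift of $F_2$ that is, say, the smallest one in the leaf space that is $\geq L_1$; by $\RR$-coveredness and properness of leaves such a choice makes sense after possibly replacing $F_1$'s lift). In any case, for two leaves one below the other the previous paragraph gives a homeomorphism; and since every leaf is (via deck transformations and the nesting structure of $\RR$) comparable to any other through a chain, composing such homeomorphisms shows $F_1 \cong F_2$. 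The cleanest route: given $F_1$ and $F_2$ with lifts $L_1, L_2$, choose any lift $L_2'$ of $F_2$ lying above $L_1$ (exists since $\fol$ is minimal so $F_2$ passes arbitrarily close above any point of $F_1$, giving such a lift by the $\RR$-covered structure), apply $(\ast)$ to get $L_1 \cong L_2' \cong F_2$, and $L_1 \cong F_1$, done.

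The main obstacle I anticipate is the bookkeeping about which lifts are nested and ensuring the flow map $\varphi$ is globally a homeomorphism rather than merely a local one — in particular verifying that $\varphi$ is surjective onto $L_2$ and injective, which is where transverse orientability of $\fol$ (hence coherent orientability of $\tau$) is essential: without it a $\widetilde\tau$-leaf could hit $L_2$ multiple times or the "up" direction would not be globally consistent. Everything else is a routine combination of the $\RR$-covered hypothesis (to compare leaves), minimality (to connect arbitrary leaves via lifts or deck translates), and property $(\ast)$ (to get the bounded-length holonomy giving the homeomorphism). I would therefore organize the proof as: (1) flowing along $\widetilde\tau$ between nested leaves is a homeomorphism; (2) any two leaves of $\fol$ have lifts that can be put in nested position using minimality and $\RR$-coveredness; (3) conclude.
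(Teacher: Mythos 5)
There is a genuine gap in your argument, and it is at the very place where the holonomy‑invariant transverse measure must enter but never does. Your ``cleanest route'' reads: ``$L_1 \cong L_2' \cong F_2$, and $L_1 \cong F_1$, done.'' But $L_1, L_2'$ are leaves of $\wcF$ in the universal cover: by Novikov/Palmeira they are all planes, so $L_1 \cong L_2'$ is content‑free, and the maps $L_1 \to F_1 = \pi(L_1)$ and $L_2' \to F_2 = \pi(L_2')$ are covering maps, not homeomorphisms, unless the leaves happen to be simply connected. What one actually needs is that the flow map $\varphi : L_1 \to L_2'$ along $\widetilde\tau$ \emph{descends} to a homeomorphism $\pi(L_1) \to \pi(L_2')$. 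For that, $\varphi$ must be equivariant with respect to the stabilizer $\mathrm{Stab}(L_1) = \pi_1(\pi(L_1))$, i.e.\ $\gamma\varphi(x) = \varphi(\gamma x)$ for all $\gamma$ fixing $L_1$. Equivariance of $\widetilde\tau$ alone gives $\gamma\varphi(x) \in \widetilde\tau(\gamma x) \cap \gamma L_2'$, but there is no a priori reason that $\gamma L_2' = L_2'$, so the computation stalls exactly there.

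This is precisely where the paper uses the transverse invariant measure $\nu$. Both $\gamma(v)$ (the image of the $\widetilde\tau$‑segment $v$ from $x$ to $\varphi(x)$) and the $\widetilde\tau$‑segment from $\gamma(x)$ to $L_2'$ start at $\gamma(x)$, point in the coherently chosen transverse direction (here transverse orientability is used), and have the same $\widetilde\nu$‑length — the first by $\pi_1(M)$‑invariance of $\widetilde\nu$, the second by holonomy invariance of $\nu$ across the product region between $L_1$ and $L_2'$. Since, again by holonomy invariance and minimality, no nondegenerate $\widetilde\tau$‑segment has zero $\widetilde\nu$‑length, the two segments coincide, which forces $\gamma\varphi(x) = \varphi(\gamma x)$ and hence $\gamma L_2' = L_2'$. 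That equivariance is what lets $\varphi$ descend and gives $\pi(L_1)\cong\pi(L_2')$. Your use of property $(\ast)$ correctly sets up the global flow map between nested lifts, and the reduction to nested lifts via $\RR$‑coveredness is fine, but the measure‑theoretic equivariance step is the heart of the lemma and is missing from your argument. A useful sanity check: your proposal never uses $\nu$ after invoking $(\ast)$, yet the conclusion is clearly false without a transverse invariant measure (e.g.\ a minimal $\RR$‑covered foliation can easily have non‑homeomorphic leaves), so something essential has been dropped.
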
 

Here we shall use the transverse orientability of $\fol$. The possible necessity of this is demonstrated by
the following foliation: start with the product foliation
of $S^2 \times S^1$ with transverse measure given by the
$S^1$ measure. Let $\eta$ be a free involution of $S^2$ and
take the quotient of $S^2 \times S^1$ by the involution
$\eta'(p,t) = (\eta(p),1-t)$ there $t$ is mod one.
The quotient has a foliation with spheres and two
projective planes, and a holonomy invariant measure. Not
all leaves are homeomorphic to each other. Here the foliation
is not minimal. We do not know whether this behavior can
occur with minimal foliations. If one does not have transverse orientability, one can always lift to a double cover to get it (minimality of the lifted foliation can be tricky, we explain this at the end). 

\begin{proof}
Let $L, E$ as in the proof of the previous lemma.
Let $\gamma$ in the stabilizer of $L$, in other words,
$\gamma$ is in $\pi_1(\pi(L))$.
Fix a basepoint $x$ in $L$. Consider the $\widetilde \tau$
segment $v$ from $x$ to $z = \varphi(x)$. Choose a path $\alpha$
in $L$ from $x$ to $\gamma(x)$. Pushing this path along
the $\widetilde \tau$ foliation, this produces a path
$\alpha_L$ from $z$ to another point $w$ in $E$.
We can push the whole path because of fact ($\ast$) above.
On the other hand,
the image of $v$ under $\gamma$ is a segment of 
$\widetilde \tau$, starting in $\gamma(x)$ and with
same $\widetilde \nu$ length as $v$. But the $\widetilde
\tau$ segment from $\gamma(x)$ to $E$ also has this
same length. Since no non degenerate segment in $\widetilde
\nu$ has zero length, because of the holonomy invariance
of $\widetilde \nu$; \ it follows that $\gamma(v)$ has to
end in $E$. In other words 

$$\gamma \varphi(x) \ \ = \ \ \varphi(\gamma(x)).$$

\noindent
In fact this works for any $x$ in $L$. It now follows that
$\pi(L)$ is homeomorphic to $\pi(E)$, and this is true for
any leaf of $\wcF$ in between $L$ and $E$. Since the leaves
of $\fol$ are dense the result follows.
\end{proof}


We first give a completely general result about lift
of minimality to finite covers. 
In this result there is no restriction on dimension
of the manifold or codimension of the foliation. The proof that follows was communicated to us by Andy Hammerlindl. 

\begin{lemma}\label{double}
Let $\cH$ be a minimal foliation in a compact
 manifold $P$
and $\cK$ the lift of $\cH$ to a finite cover $N$
of $P$. Suppose that $N$ is connected.
Then $\cK$ is a minimal foliation.
\end{lemma}

\begin{proof}{}
We may assume without loss of generality that the cover 
is regular. Let $\Gamma$ be the group of deck transformations
of the cover $\pi_N: N \rightarrow P$.

Let $\Lambda$ be a non-empty minimal 
sublamination $\Lambda$ of $\cK$ on $N$.
The goal is to show that $\Lambda$ is all of $N$.
Minimality of the foliation $\cH$ on $P$ implies that

$$\bigcup_{\gamma \in \Gamma} \  \gamma(\Lambda)$$

\noindent
is dense in $N$.
Therefore for any $x$ in $N$ there is a sequence of points 
$x_i$ in  $\gamma_i(\Lambda)$ converging to $x$.
Since $\Gamma$ is finite we 
take a subsequence, and assume that $(\gamma_i)$ is
a constant sequence in $\Gamma$.
As  $\gamma_i(\Lambda)$ is closed, this implies that 
$x$ is in $\gamma_i(\Lambda)$.

We have shown that 

$$\bigcup_{\gamma \in \Gamma} \gamma(\Lambda) \ \ = \ \ N.$$

\noindent
By minimality of $\Lambda$, if $\gamma^i(\Lambda)$
intersects $\gamma^j(\Lambda)$,
then the two sets are equal. 
Hence, $N$ is a disjoint union of closed sets of the
form
$\gamma_i(\Lambda)$. As $N$ is connected, 
there can be at most one such closed set and so
$\Lambda = N$, so $\cK$ is minimal.

This finishes the proof.
\end{proof}

Now we can prove the theorem.

\begin{proof}[Proof of Theorem \ref{minimality}]

\vskip .1in
We already proved that $\cF$ is $\R$-covered and uniform
in Lemma \ref{rcov}.
Now we prove the trichotomy in the statement of Theorem
\ref{minimality}.
Lift to a double cover $M_2$ so that the lift $\cG$ of
$\cF$ is transversely orientable. By the previous lemma,
$\cG$ is minimal, and also the leaves of $\cG$ are 
pairwise homeomorphic.

\vskip .08in
\noindent
{\bf {Case 1}} $-$ Suppose that $\cG$ has a plane leaf.

Then all the leaves of $\cG$ are planes. 
Then leaves of $\cG$ cover those of $\cF$ at most two to
one, so the fundamental group of leaves of $\cF$ is a subgroup
of ${\bf Z}_2$. If the fundamental group is not trivial,
then the leaf is the projective plane, which is disallowed
by minimality of $\cF$. It follows that all the leaves
of $\cF$ are also planes. 
Then $M = \mathbb{T}^3$,
by a result of Rosenberg \cite{Rosenberg}. 

\vskip .08in
\noindent
{\bf {Case 2}} $-$ Suppose that $\cG$ has an annulus or
a M\"{o}bius band leaf.

We first rule out a M\"{o}bius band leaf of $\cG$.
Suppose that $E$ is a M\"{o}bius band leaf. Let $\alpha$ be a 
simple closed curve in $R$ so that it has a small neighborhood
$U$
which is homeomorphic to a compact M\"{o}bius band.
Then $E - U$ is an open annulus.
Now take a small transversal neighborhood of $U$ in $M$
foliated by the transversal foliation. Since there is
a holonomy invariant transverse measure, this neighborhood
is product foliated by $\cG$. In particular all the local
leaves of $\cG$ in this neighborhood are compact M\"{o}bius
bands. 
There are infinitely many returns of $E - U$
to the fixed transversal neighborhood of $U$, because $E$ is
dense. The local leaves of $E$ intersected with this
neighborhood are 
local leaves of $\cG$ which are compact M\"{o}bius bands. This
contradicts that $E - U$ is an open annulus.

This shows that there are no M\"{o}bius band leaves of $\cG$
and all leaves of $\cG$ are annuli.

Let $V$ be a leaf of $\cG$. 
Fix a simple not null homotopic closed curve $\delta$ in $V$ through
a point  $x$.
Then $\delta$ generates the fundamental group of $V$.
Since $\nu$ is holonomy invariant and $V$ is dense, the 
curve $\delta$ lifts to a nearby closed curve $\beta$ through
$y$ in $V$. In addition $\beta \cup \delta$
is the boundary of an embedded
annulus $A$ in $M_2$ made up of very small segments of the fixed transverse
foliation. In addition since $V$ is an annulus, then $\beta \cup \delta$
also bound a unique annulus $B$ contained in $V$. We can cut and
paste, choosing the first such intersection of $B$ with the
interior of $A$, so that $B$ does not intersect the interior
of $A$. For any $\epsilon > 0$,
we can also choose $B$ very big in $V$ so that $B$
is $\epsilon$ dense in $M$. In particular we can choose $B$ so
that it intersects every flow line of the transverse flow.

The union $A \cup B$ is a torus, transverse to the flow
along $B$. We can slightly adjust it along $A$ so that it
is transverse to the flow. The resulting torus $T$ is transverse
to the flow and it intersects every orbit of the transverse
flow. In other words the torus is a cross section of the
flow. Hence the manifold fibers over the circle with
fiber a torus.  

\vskip .08in
Suppose first that $M$ is a solv manifold, but is not a
nil manifold.
It is proved in appendix B of \cite{HP-Nil} that $\cG$ is
weakly equivalent to either a stable or unstable foliation
of a suspension Anosov flow or a fibration by tori.
In the first case $\cG$ has to have planar leaves,
in the second case $\cG$ has to have tori leaves. 
Any of these is disallowed by the conditions here.
This implies that $M$ is a nil manifold,
which could be $\mathbb T^3$.

This finishes the analysis of Case 2.

\vskip .08in
\noindent
{\bf {Case 3}} $-$ No leaves of $\cG$ are planes, annuli
or M\"{o}bius bands.

In particular since there are no compact leaves,
it follows that the leaves of $\cG$ cannot be conformally
elliptic or parabolic and they are all conformally hyperbolic.
Each one is separately uniformized with a metric of
constant sectional curvature $-1$.

In this case we use the results of Candel in
\cite{Candel}. In section 4.2 of \cite{Candel} he proves
that if all leaves of $\cG$ uniformize to being hyperbolic,
then the uniformization is continuous and one can
choose a metric in $M$ so that each leaf of $\cG$ has
a metric of constant negative curvature.

It now follows that the leaves of $\widetilde \cG$ 
(which are the same as the leaves of $\widetilde \fol$)
 are uniformly Gromov
hyperbolic in any metric. This proves Case 3.

This finishes the proof of Theorem \ref{minimality}.
\end{proof}

\subsection{The partially hyperbolic case} 

We now apply the general result of the previous subsection to the partially hyperbolic case. The goal is to get Corollary \ref{cor.gromov}, Gromov
hyperbolicity of leaves. We point out that this result is easy in the case where $M$ is a hyperbolic 3-manifold, but we deal with all cases at once here. 

For this we will need to understand the minimal sublaminations of $\Lambda^{su}$, for this, it is useful to use the octopus decomposition of a closed lamination in a foliation (see \cite[Proposition 5.2.14]{CandelConlon}). We state here the main properties we will use: let $\cF$ be a foliation of a closed 3-manifold without compact leaves and consider a closed sublamination $\Gamma$ (i.e. a closed non-empty set saturated by leaves of $\cF$). We can assume that $\Gamma \neq M$. 

Let $U$ be the metric completion of a complementary region
of $\Gamma$. Recall that we can think of the interior $\mathring{U}$
of $U$ as a subset of $M$.
Hence $\mathring{U}$ is foliated by leaves of $\fol$ and by the
completion operation we can think of $U$
also as saturated by leaves of $\fol$.
The set  $U$ has an octopus decomposition \cite[Proposition 5.2.14]{CandelConlon}. This means the following:

$$U \ \  = \ \  K \cup D,$$

\noindent
where $K$ is compact, $D$ is non compact
and $D$ is an $I$-bundle over a non compact surface.
In addition the $I$-bundle fibration in $D$ can be chosen
transverse to the foliation $\fol$ in $D$.
The set $K$ is called a {\emph{core}} for $U$. 
The components of $D$ are called the {\emph{arms}}\footnote{The terminology octopus, core, arms is standard in
foliation theory \cite{CandelConlon}.}.

An arm $V$ is an $I$-bundle over a non compact, connected
surface $S$ with boundary. As one escapes compact sets
in $S$, the corresponding $I$-fibers in $V$ have 
length converging to zero. Using this we can show:

\begin{lemma}\label{octopus}
Let $\fol$ be a foliation without compact leaves and $\Gamma$ be a compact sublamination. Then, if $L$ is a leaf of $\fol$ at positive distance from $\Gamma$, it follows that $L$ is contained in the core of the octopus decomposition of some complementary region of $\Gamma$. 
\end{lemma}

\begin{proof}
Since the foliation is transverse to the $I$-bundle structure in the arms, it follows that if the leaf $L$ intersects some arm, then it must cover the surface in the base of the $I$-bundle and intersect all $I$-fibers in the
arm. Thus the distance from $L$ to the boundary goes to $0$ going
arbitrarily deep in the arms.
This contradicts that $L$ is at positive distance to $\Gamma$. 
\end{proof}

We will now need the following result which will use the dynamics and not only the geometry of the lamination. 

\begin{lemma}\label{minimal}
Suppose that $f$ is a partially hyperbolic diffeomorphism
in $M^s$ with $f$ not accessible and such that the non-wandering set of $f$ is all of $M$. Suppose further that there are no compact surfaces tangent to $E^s \oplus E^u$.
Then the lamination $\Lambda^{su}$ contains a unique
minimal sublamination $\Lambda$.
In particular $\Lambda$ is $f$-invariant.
\end{lemma}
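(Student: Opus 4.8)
The plan is to show that $\Lambda^{su}$ has a \emph{unique} minimal sublamination by ruling out the two obstructions: that there might be no minimal sublamination at all, and that there might be two disjoint ones. Existence of at least one minimal sublamination is standard (Zorn's lemma applied to the family of non-empty closed sub-laminations of $\Lambda^{su}$, ordered by reverse inclusion; the intersection of a decreasing chain is a non-empty closed sublamination since leaves are closed and a decreasing intersection of non-empty compact-in-each-chart pieces is non-empty, using that $\Lambda^{su}$ is a closed subset of the compact $M$). So the real content is uniqueness, and the mechanism that forces it is that every leaf of $\Lambda^{su}$ is saturated by strong stable and strong unstable manifolds of $f$, together with the fact that the non-wandering set is all of $M$.

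First I would record the structure coming from Theorem \ref{t.HHU} and Remark \ref{r.HHU2}: $\Lambda^{su}$ is a closed $f$-invariant lamination tangent to $E^s\oplus E^u$, each leaf is a $C^1$ surface saturated by both $\cW^s$ and $\cW^u$ (indeed $\cW^s$ and $\cW^u$ are tangent to $E^{cs}\cap(E^s\oplus E^u)$ inside each leaf, so each leaf, being tangent to $E^s\oplus E^u$, is a union of full strong-stable leaves and full strong-unstable leaves). Then, given any minimal sublamination $\Lambda\subset\Lambda^{su}$, I claim that the set of points whose strong-stable \emph{and} strong-unstable manifolds both meet $\Lambda$ is large. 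The key local step: if $x\in\Lambda$ then the strong unstable leaf $\cW^u(x)$ lies entirely in $\Lambda$ (it is inside a leaf of $\Lambda$, hence inside $\Lambda$), and similarly $\cW^s(x)\subset\Lambda$. Now use that $f$ is non-wandering, hence recurrent points are dense (this is exactly the fact footnoted in the excerpt about homeomorphisms of compact metric spaces with full non-wandering set). A recurrent point $p$ has the property that some subsequence $f^{n_k}(p)\to p$; combined with the contraction of $\cW^s$ and expansion of $\cW^u$ under iteration, one gets that the stable and unstable "saturation" of $\Lambda$ spreads. More precisely, I would argue: the union $\Lambda^s:=\bigcup_{y\in\Lambda}\cW^s(y)$ equals $\Lambda$ itself (since each leaf of $\Lambda$ is $\cW^s$-saturated), so this is automatic; the point is instead to show any \emph{other} minimal sublamination $\Lambda'$ must intersect $\Lambda$, whence $\Lambda'=\Lambda$ by minimality of both.

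To get that intersection, I would use an accessibility-flavored argument: suppose $\Lambda,\Lambda'$ are disjoint minimal sublaminations. Pick $x\in\Lambda$, $x'\in\Lambda'$. Since $\Lambda^{su}$ has complementary regions that are product-foliated $I$-bundles (Theorem \ref{t.HHU}), and since $f$ is not accessible, the accessibility class of $x$ is contained in a single leaf of $\Lambda^{su}$ (this is essentially the content of \cite{HHU-Ergo}: the non-accessibility produces the lamination precisely as a union of accessibility classes, so each leaf \emph{is} an accessibility class, or a union of them). Then $\cW^s$- and $\cW^u$-saturating $x$ keeps us in the leaf of $x$, hence in $\Lambda$; to move between minimal sublaminations one would have to leave $\Lambda$, and the complementary-region structure plus density of recurrent points in $M$ forces the minimal sublamination to be unique — concretely, take any point $z$ in the complement of $\Lambda$, push it by $f^n$ and $f^{-n}$; its strong stable and unstable manifolds accumulate on $\Lambda^{su}$, and a recurrence/$\lambda$-lemma argument shows the forward and backward limit laminations must coincide with the (unique) minimal set of the lamination, which pins down $\Lambda$. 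The cleanest phrasing: $\Lambda$ is the set of non-wandering points of $f$ \emph{restricted} to the lamination dynamics, and since non-wandering points of $f$ are dense in $M$ while the complementary regions are $I$-bundles in which $f$ acts as a product by center segments (item (2) of the setup), any recurrent point of $f$ lying in a complementary region must have its center-segment limit on $\partial$ of that region; iterating and using that there are only finitely many "fat" complementary regions, all recurrence funnels into a single minimal piece.

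The main obstacle I anticipate is making precise the claim that non-accessibility forces each leaf of $\Lambda^{su}$ to be a union of accessibility classes, and then converting "a recurrent point in a complementary region" into "its orbit closure meets $\Lambda$." The $I$-bundle structure of complementary regions and the product center foliation (items (1),(2)) are exactly the tools for this: a recurrent point $p$ in a complementary region $U\cong F\times(0,1)$ has its center segment $\{q\}\times(0,1)$, and $f$ permutes the finitely many periodic complementary regions while the center coordinate is dragged monotonically (since $\ft$ is bounded distance from the identity, or via a Poincaré-recurrence argument inside the $I$-bundle), so the $\omega$-limit of $p$ lies on the boundary $F\times\{0,1\}\subset\Lambda^{su}$, and in fact in its unique minimal set. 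Establishing this monotonicity/escape carefully — that $f$ cannot have a recurrent point strictly inside a complementary region unless that region is periodic and even then the center coordinate cannot be recurrent — is where I'd expect to spend the real effort; everything else is soft topology (Zorn, density of recurrent points, minimality).
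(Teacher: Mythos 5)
Your Zorn's lemma argument for existence is fine, but your uniqueness argument has a serious gap, and the paper's route is quite different. The crux of the paper's proof --- which your proposal skips entirely --- is to show \emph{first} that an arbitrary minimal sublamination $\Lambda$ of $\Lambda^{su}$ is $f$-invariant. This is not automatic: minimality of $\Lambda$ gives no control over $f(\Lambda)$, which is a priori just some other minimal sublamination. The paper handles this by forming $E := \overline{\bigcup_{i\in\ZZ} f^i(\Lambda)}$, which \emph{is} $f$-invariant by construction, and then applying Theorem~\ref{t.HHU} via Remark~\ref{r.HHU2} to $E$ to learn that the completions of its complementary regions are $I$-bundles. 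An octopus decomposition $U = K\cup D$ of a complementary region $U$ of $\Lambda$, together with the observation that if $f^i(\Lambda)$ entered the thin part $D$ it would intersect every $I$-fiber of a component of $D$ and so its translates would accumulate on $\Lambda$ (contradicting that $\Lambda$ and $f^i(\Lambda)$ are disjoint compact laminations whenever $f^i(\Lambda)\neq\Lambda$), then forces $f^i(\Lambda)=\Lambda$. Only after $f$-invariance is established can one apply the structure theorem directly to $\Lambda$ and deduce that the closure of any leaf of $\Lambda^{su}$ not in $\Lambda$ must contain $\Lambda$, which gives uniqueness.

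Your proposal implicitly treats the $I$-bundle structure of the complementary regions of $\Lambda$ (as opposed to $\Lambda^{su}$) as available, but item (2) of the setup and Theorem~\ref{t.HHU} apply only to $f$-invariant laminations, and you never show $\Lambda$ is $f$-invariant. The uniqueness argument also contains a circularity: the phrase about forward and backward limit laminations coinciding ``with the (unique) minimal set of the lamination'' presupposes the uniqueness you are trying to prove, and ``all recurrence funnels into a single minimal piece'' is the full content of the lemma asserted rather than argued. Finally, the accessibility-class detour is unnecessary: the paper's proof uses no accessibility arguments in this lemma, only the lamination's complementary structure, octopus decompositions, and the disjoint-or-equal dichotomy for translates of a minimal sublamination.
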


\begin{proof}{}
Let $\Lambda$
be a minimal sublamination of $\Lambda^{su}$. We want to show that $\Lambda$ is $f$-invariant. If we show this, we can apply Theorem \ref{t.HHU} (recall Remark  \ref{r.HHU2}) to $\Lambda$ and get that  
the completion of the  complementary regions of $\Lambda$
are $I$-bundles. If $L$ is any leaf of $\Lambda^{su}$ which is not
in $\Lambda$, then $L$ is contained in one of these $I$-bundles.
As in Lemma \ref{octopus} one can prove  that the closure of $L$ 
has to contain $\Lambda$ and this shows that $\Lambda$ is the unique minimal sublamination of $\Lambda^{su}$.

We will therefore consider  the sets $f^i(\Lambda), \  i \in \ZZ$, and the lamination 
$$E \ \ = \ \ \overline{\bigcup_{i \in \ZZ} f^i(\Lambda)}.$$ 

\noindent
Notice that $f^i(\Lambda)$ is contained in $\Lambda^{su}$. Since $\Lambda$ is minimal then for any $i, j$
either $f^i(\Lambda)  = f^j(\Lambda)$ or they are disjoint. 
	Then $E \subset \Lambda^{su}$ is a sublamination and $E$ is $f$-invariant.

Because $E$ is $f$-invariant, we can now apply Theorem \ref{t.HHU} (recall Remark \ref{r.HHU2}). It follows that the completion of a complementary region of 
$E$ is an $I$-bundle. 
		
 In the beginning of section \ref{section4} we explained that $\Lambda^{su}$ can be extended to a foliation $\fol$,
which is not necessarily $f$ invariant. Clearly $\fol$ extends also $\Lambda$ and $E$ so we can apply the previous lemma to both.

We start by analyzing some properties of $\Lambda$, which we can assume is not $M$ (otherwise there is nothing to prove). 
We can use the octopus decomposition of 
any complementary region of $\Lambda$, as explained above. Using Lemma \ref{octopus} we see that $f^{i}(\Lambda)$ must be contained in the core of the octopus decomposition of some complementary region of $\Lambda$ for every $i$ such that $f^i(\Lambda)\neq \Lambda$. 

Suppose that $E \not = \Lambda$. Then there is a complementary
region $V$ of $\Lambda$ which intersects $E$ 
in $f^i(\Lambda)$ for some $i \in \mathbb Z$. Let $U$ be the metric
completion of such
a complementary region $V$, with octopus decomposition $K \cup D$ as above.
We know that $E \cap V$ is contained in $K$. In addition 
$E \cap V$ cannot get very close to a boundary leaf of $V$ or else
using the local product structure of $\fol$ and that $K$
is compact it would imply that $E$ intersects the interior
of $D$. 
In particular there is $\eps > 0$ so that if $x$ is in 
$E \cap V$ then $d(x,\Lambda) > \eps$.

\vskip .1in
Now we will use both sublaminations $\Lambda$ and $E$
and appropriate complementary components of these
sublaminations.
First there is the 
complementary region $V$ of $\Lambda$ with completion $U$
as above.

Let $L$ be a boundary leaf of $U$. 
Recall that $E \cap V$ is contained in the interior
of $K$ and $E$ does not intersect the interior of $D$.
In other words there are no leaves of $E$ between
$E \cap V$ and $L$.
It follows that there is a complementary region
$A$ of $E$ which 
contains points of $E \cap V$ and also of $L$ in the boundary
(notice now we considering
$A$ a complementary region of $E$ and not of $\Lambda$).
In particular $L$ is contained in the boundary of $A$.

There is at least one other boundary leaf $F$ of $U$ which
is also contained in the boundary of $A$: starting from
a point in $V$ less than $\eps$ near $L$, move along
in $V$ 
until you hit an arm of $U$, that it, until you intersect
$D$. This can
be done avoiding $E \cap V$ since all points in $E \cap V$
are at least $\eps$ distant from any point in the boundary
of $U$.
This arm of $U$ has two boundary leaves, one is contained
in $L$ and the other is contained in another boundary
component $F$ of $U$.
Since $F$ is in $\Lambda$ it is also in $E$. 
Since the metric completion of $A$ is a foliated
$I$-bundle, there are only two leaves in the boundary
of this metric completion of $A$. This implies that
the boundary of $A$ is $L \cup F$.
This is impossible because $E \cap V$ intersects $A$.


The contradiction was obtained by assuming that $f^i(\Lambda)$
is distinct from $\Lambda$ for some $i$.


The conclusion is that $\Lambda$ is $f$ invariant.
In particular $E = \Lambda$ as we wanted to show.
\end{proof}

%

We stress that this has no assumption on $M$, or homotopic
assumptions on $f$. We only have the dynamical hypothesis (non-wandering of $f$ is all of $M$)
on $f$. Lemma \ref{minimal} allows us to use Theorem \ref{minimality} to 
obtain Gromov hyperbolicity of leaves of $\Lambda^{su}$ using
also the following result:

\begin{proposition}\label{grhyp}
Let $\Lambda \subset M$ be a minimal lamination
 without compact leaves
 and so that the complementary regions of $\Lambda$ 
have completions that are 
are $I$-bundles. Assume that $M$ is an irreducible 3-manifold whose fundamental group is not (virtually) solvable. Then, leaves of $\Lambda$ are uniformly Gromov hyperbolic.
\end{proposition}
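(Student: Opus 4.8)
The plan is to complete $\Lambda$ to a foliation and reduce to Theorem \ref{minimality}. Since the complementary regions of $\Lambda$ are $I$-bundles, one can extend $\Lambda$ to a foliation $\fol$ on $M$ exactly as in Lemma \ref{l.homot} (we do not need the $h$-invariance here, only the foliation $\fol$): fill each completed complementary region $F \times [0,1]$ by the product foliation $F \times \{t\}$, obtaining a $C^{0,1+}$ foliation $\fol$ extending $\Lambda$ and with no compact leaves (the leaves of $\Lambda$ are non-compact by hypothesis, and the new leaves $F \times \{t\}$ are non-compact because the $I$-bundles are over non-compact surfaces, as $\Lambda$ has no compact leaves and the completions are non-compact $I$-bundles in the complementary regions; any compact leaf would have to be a torus, giving virtually solvable $\pi_1(M)$, contrary to hypothesis). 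In particular $\fol$ is taut, so by Novikov and Palmeira $\widetilde M = \mathbb R^3$ and leaves of $\widetilde\fol$ are properly embedded planes.

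Next I would produce a holonomy invariant transverse measure for $\fol$ and check minimality. For the transverse measure: on the sublamination $\Lambda$ itself one has a natural transverse invariant measure supported on $\Lambda$ (a minimal lamination without holonomy-expanding behaviour — more precisely, since $\Lambda$ is a minimal lamination with $I$-bundle complementary regions in an atoroidal-type setting it carries a transverse invariant measure; alternatively one builds the measure directly), and since in each complementary $I$-bundle the foliation $\fol$ is a product, the Lebesgue measure in the $I$-direction extends the measure across the complementary regions; the holonomy invariance in the product regions is immediate and along $\Lambda$ it comes from the invariant transverse measure of the minimal lamination. For minimality of $\fol$: every leaf of $\fol$ either lies in $\Lambda$, hence is dense in $\Lambda$ and therefore (pushing along the transverse foliation through the product complementary regions, as in the proof of Lemma \ref{minimal}) dense in all of $M$; or it lies in the interior of a complementary $I$-bundle, in which case — arguing as in the final paragraph of the proof of Lemma \ref{minimal}, the closure of such a leaf contains $\Lambda$ — it is again dense in $M$. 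So $\fol$ is a minimal, codimension-one foliation with a holonomy invariant transverse measure. (If $\fol$ is not transversely orientable, pass to the double cover as in Theorem \ref{minimality}; this does not affect the conclusion on leaves.)

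Now apply Theorem \ref{minimality} to $\fol$. Since $\pi_1(M)$ is not virtually solvable, $M$ is neither $\mathbb T^3$ nor a nil-manifold nor a solv-manifold, so the first two alternatives of Theorem \ref{minimality} are excluded: there are no planar, annular or M\"obius band leaves of $\fol$ (equivalently of its transversely orientable double cover). Hence the third alternative holds: the leaves of $\widetilde\fol$ are uniformly Gromov hyperbolic in a suitable metric on $M$ (Candel's uniformisation). In particular the leaves of $\widetilde\Lambda$, which are leaves of $\widetilde\fol$, are uniformly Gromov hyperbolic. This gives the proposition.

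The main obstacle I expect is the construction of the holonomy invariant transverse measure for $\fol$ and, relatedly, verifying minimality of $\fol$ cleanly: one must argue that the minimal lamination $\Lambda$ genuinely carries a transverse invariant measure and that this measure extends consistently and holonomy-invariantly across the product complementary regions, and one must handle the non-transversely-orientable case together with the subtlety (flagged in Theorem \ref{minimality} and Lemma \ref{double}) that minimality can be delicate under finite covers. Everything after that — excluding the torus/nil/solv cases using non-solvability of $\pi_1(M)$, and invoking Candel — is routine given the results already in the excerpt. (I would also remark that in the case of central interest, $M$ hyperbolic, the proposition is immediate since $M$ is atoroidal, so all of this machinery is only needed for the general 3-manifold statement.)
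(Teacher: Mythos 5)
The main idea of reducing to Theorem \ref{minimality} is correct, and you have indeed put your finger on exactly the two places where your approach has gaps; but both gaps are genuine, and the paper resolves them by doing something different from what you propose.

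First, the transverse invariant measure. You assert (hedging with ``more precisely'' and ``alternatively one builds the measure directly'') that a minimal lamination with $I$-bundle complementary regions carries a transverse invariant measure. This is false in general: there is no reason a minimal lamination in a 3-manifold should be measurable, and indeed the dichotomy you want is precisely what Candel's theorem provides. The paper argues by \emph{contradiction}: suppose the leaves of $\Lambda$ are \emph{not} uniformly Gromov hyperbolic; then Candel's uniformisation result \cite{Candel} forces the existence of a holonomy-invariant transverse measure $\nu$ on $\Lambda$, and one derives a contradiction from this. Your direct claim that the measure exists a priori cannot be fixed without making this contrapositive move, which changes the global structure of the proof.

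Second, and more seriously, the blown-up foliation $\fol$ that you build by filling the complementary $I$-bundles with product leaves $F \times \{t\}$ is \emph{not} minimal, so Theorem \ref{minimality} does not apply to it. A leaf $L \subset \Lambda$ is dense in $\Lambda$, but the thick parts of the octopus decompositions of the complementary regions need not be approximated by $\Lambda$, so the interior product leaves $F\times\{t\}$ need not lie in $\overline{L}$. Your claim that ``pushing along the transverse foliation'' gives density only controls the thin parts of the complementary regions. The paper avoids this entirely by going the opposite direction: it blows \emph{down} the $I$-bundle complementary regions (collapsing each $I$-fiber to a point) to obtain a genuine foliation $\hol$ of $M$. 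This collapse preserves minimality (density in $\Lambda$ translates to density in the quotient, since the quotient image of $\Lambda$ is all of $M$), and the transverse measure $\nu$ descends to a transverse measure on $\hol$. Then Theorem \ref{minimality} applies to $\hol$, and non-solvability of $\pi_1(M)$ rules out the first two alternatives, giving the desired conclusion. The blow-down also explains why the hypothesis that $\Lambda$ has no compact leaves matters: if $\Lambda$ were a single compact leaf, the collapse would degenerate $M$ to a surface, which the paper explicitly remarks on. So the structure of your argument is salvageable, but you should argue by contradiction via Candel and collapse the complementary regions rather than extend by products.
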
 

\begin{proof}{}
 Suppose that leaves
of $\Lambda$ are not uniformly Gromov hyperbolic. 
By Candel's theorem \cite{Candel} 
there is a holonomy invariant transverse measure $\nu$ to
$\Lambda$.

The support of $\nu$ is a sublamination of $\Lambda$.
By hypothesis, this lamination is minimal, so
the support of $\nu$ is all of $\Lambda$.

The complementary regions to $\Lambda$ have completions that
are $I$-bundles. There are two 
possibilities. Suppose that $\Lambda$ has a compact leaf.
By minimality $\Lambda$ is a single leaf $G$.
Then up to a double cover $G$ is a fiber of a fibration
of $M$ over $S^1$.
If $G$ is a sphere, projective plane, torus or Klein
bottle, then $\pi_1(M)$ is virtually solvable, contrary
to hypothesis. Hence $G$ is Gromov hyperbolic and
the result is proved.

The other option is that $\Lambda$ does not have compact leaves.
Hence one can blow down these complementary
regions so that $\Lambda$ blows down to a foliation
$\hol$. Since $\Lambda$ is minimal, then $\hol$ is
minimal. The holonomy invariant transverse measure $\mu$
blows down to a holonomy invariant transverse measure
to $\hol$. The support is all of $M$. Now one can apply Theorem \ref{minimality} and the fact that $\pi_1(M)$ is not (virtually) solvable to conclude. 
This finishes the proof.

We remark that 
if $\Lambda$ was a single compact leaf $G$ we could not do
the blow down procedure, as it would blow down $M$ to the
single compact surface $G$.
\end{proof}

This allows us to get the following result which is what we will need. 

\begin{corollary}\label{cor.gromov}
Let $f$ be a partially hyperbolic diffeomorphism of $M$ 
which is not accessible and so that the non-wandering
set of $f$ is $M$.
Suppose that $\pi_1(M)$ is not virtually solvable.
Then the leaves of $\Lambda^{su}$ are uniformly Gromov hyperbolic. 
\end{corollary}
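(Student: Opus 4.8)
The plan is to deduce the corollary directly by combining the structural facts already established about $\Lambda^{su}$ with Proposition \ref{grhyp}. First I would invoke Theorem \ref{t.HHU} (using Remark \ref{rem.toriimplysolv} to rule out tori tangent to $E^s\oplus E^u$, since $\pi_1(M)$ is not virtually solvable) to obtain that $\Lambda^{su}$ has no compact leaves and that the completion of every complementary region of $\Lambda^{su}$ is an $I$-bundle. Then I would apply Lemma \ref{minimal}, whose hypotheses are exactly ``$f$ partially hyperbolic, not accessible, non-wandering set all of $M$, and no compact surfaces tangent to $E^s\oplus E^u$'' — the last being automatic here — to produce the unique minimal sublamination $\Lambda\subset\Lambda^{su}$, which is $f$-invariant. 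By Theorem \ref{t.HHU} applied to $\Lambda$ (it is $f$-invariant, so Remark \ref{r.HHU2} applies), the complementary regions of $\Lambda$ have completions that are $I$-bundles as well, and $\Lambda$ has no compact leaves.

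Next I would check that the ambient hypotheses of Proposition \ref{grhyp} hold for $M$: a hyperbolic $3$-manifold is irreducible, and more generally any $M$ admitting a partially hyperbolic diffeomorphism with a good lift and $\pi_1(M)$ not virtually solvable — one should note $M$ is irreducible here because it carries a taut foliation (the foliation $\cF$ extending $\Lambda^{su}$ from Theorem \ref{t.HHU}), and taut foliations live only on irreducible $3$-manifolds (or $S^2\times S^1$, excluded since $\pi_1$ is not virtually solvable). With irreducibility and non-solvability of $\pi_1(M)$ in hand, Proposition \ref{grhyp} gives that the leaves of $\Lambda$ are uniformly Gromov hyperbolic.

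Finally I would upgrade uniform Gromov hyperbolicity from $\Lambda$ to all of $\Lambda^{su}$. Every leaf $L$ of $\Lambda^{su}$ not contained in $\Lambda$ lies in the completion of a complementary region $W$ of $\Lambda$, which is an $I$-bundle $F\times[0,1]$; hence $L$ is contained in a product region, and as shown inside the proof of Lemma \ref{minimal} the closure of $L$ contains $\Lambda$, so in particular $L$ is (quasi-isometrically, via the $I$-bundle projection) comparable to a leaf of $\Lambda$. Since Gromov hyperbolicity with a uniform constant is a quasi-isometry invariant and the $I$-bundle structure gives a uniform bound on the fiber length over the compact core, all leaves of $\Lambda^{su}$ are uniformly Gromov hyperbolic. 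The main obstacle I anticipate is this last uniformity step: one must be careful that the ``octopus'' non-compact ends of the $I$-bundle completions do not destroy the uniform hyperbolicity constant, but this is handled exactly as in the octopus decomposition argument of Lemma \ref{minimal} together with the fact (Theorem \ref{minimality}, Case 3, via Candel \cite{Candel}) that the hyperbolic metrics vary continuously and hence are comparable on the compact part of $M$.
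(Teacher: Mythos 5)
Your argument follows the paper's proof step for step: Lemma \ref{minimal} produces the unique minimal $f$-invariant sublamination $\Lambda$, Proposition \ref{grhyp} gives uniform Gromov hyperbolicity of its leaves (the irreducibility check you supply is a correct detail the paper leaves implicit), and the $I$-bundle structure of the complementary regions of $\Lambda$ from Theorem \ref{t.HHU} promotes this to all of $\Lambda^{su}$. One small remark: the final upgrade step does not actually need Theorem \ref{minimality} (Case 3) or Candel's uniformization as you suggest — it is a direct consequence of leaves of $\Lambda^{su}$ inside an $I$-bundle region being quasi-isometric (via the $I$-fibration) to a Gromov hyperbolic boundary leaf of $\Lambda$, and the thin non-compact arms of the octopus decomposition make the projection nearly isometric there, so they pose no obstruction to uniformity.
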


\begin{proof}{}
By Lemma \ref{minimal}, the lamination $\Lambda^{su}$ has a unique
minimal sublamination $\Lambda$ and it is $f$-invariant.
By Theorem \ref{t.HHU} the completion of the complementary regions 
of $\Lambda$ are $I$-bundles.
Hence by Proposition \ref{grhyp} the leaves of $\Lambda$ are
Gromov hyperbolic. 
Using again that the completion of the complementary regions of
$\Lambda$ are $I$-bundles, it follows that 
the leaves of $\Lambda^{su}$ in the 
complement of $\Lambda$ are also uniformly Gromov hyperbolic.
\end{proof}

\subsection{Finite lifts of iterates and contractible periodic points}

We will also need the following two results.
The first is a completely general result.

\begin{lemma}\label{fmin2}
Let $f$ be a homeomorphism of $M$ which is non-wandering.
Let $g$ be a lift of an iterate of $f$ to a finite
cover $M_1$ of $M$. Then $g$ is non-wandering as well.
\end{lemma}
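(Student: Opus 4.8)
The statement is that if $f : M \to M$ is non-wandering and $g$ is a lift of an iterate $f^k$ to a finite cover $M_1 \to M$, then $g$ is non-wandering. The plan is to reduce to the criterion mentioned in the footnotes of the paper: a homeomorphism of a compact metric space is non-wandering if and only if its recurrent points are dense. First I would recall that $f$ non-wandering on a compact manifold implies (by the Birkhoff argument, see \cite[Section 5]{CP}) that the set $R(f)$ of recurrent points of $f$ is dense in $M$. The same then holds for the iterate $f^k$, since the recurrent points of $f^k$ coincide with the recurrent points of $f$ (a point is recurrent for $f$ exactly when it is recurrent for $f^k$, using compactness: from $f^{n_j}(x) \to x$ one extracts a subsequence of the $n_j$ lying in a fixed residue class mod $k$). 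So $R(f^k)$ is dense in $M$.

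The second step is to lift recurrence through the finite cover. Let $\pi_1 : M_1 \to M$ be the covering projection, and let $x_1 \in M_1$ with $\pi_1(x_1) = x$ a recurrent point of $f^k$ with $g$ projecting to $f^k$; i.e. $\pi_1 \circ g = f^k \circ \pi_1$. I want to show the set of $g$-recurrent points is dense in $M_1$. Since $\pi_1$ is a local homeomorphism and $M_1$ is compact, it suffices to show that whenever $x$ is $f^k$-recurrent, some (hence, by deck-transitivity, every point in a suitable subset of the) preimage $x_1$ of $x$ is $g$-recurrent. Pick $n_j \to \infty$ with $f^{kn_j}(x) \to x$. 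Then $g^{n_j}(x_1)$ is a sequence in the compact fiber-bundle $M_1$; pass to a subsequence so that $g^{n_j}(x_1) \to y_1$ for some $y_1$ with $\pi_1(y_1) = x$. Thus $y_1 = \theta(x_1)$ for a deck transformation $\theta$; since the deck group is finite, some further power $\theta^m = \mathrm{id}$. Now iterate: using that $g$ commutes with deck transformations (deck transformations of a finite cover, and $g$ covers $f^k$), one shows $g^{N_j}(x_1) \to x_1$ along a suitable subsequence obtained by concatenating $m$ blocks of the $n_j$'s, so $x_1$ is $g$-recurrent. Hence $R(g) \supset \pi_1^{-1}(R(f^k))$, which is dense in $M_1$ because $R(f^k)$ is dense in $M$ and $\pi_1$ is a finite covering (preimage of a dense set under a finite covering is dense). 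By the density-of-recurrent-points criterion, $g$ is non-wandering.

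\textbf{Main obstacle.} The one delicate point is the passage ``$g^{n_j}(x_1) \to \theta(x_1)$ for a deck transformation $\theta$ of finite order implies $x_1$ is $g$-recurrent.'' This is where compactness of the fiber and finiteness of the deck group must be combined carefully: one wants to choose the increments $n_j$ so that along a composite sequence the accumulated deck transformation becomes trivial. The clean way is to argue in the finite quotient: consider the finitely many deck transformations, and for each large $n$ record the ``approximate deck transformation'' $\theta_n$ such that $g^n(x_1)$ is close to $\theta_n(x_1)$; by pigeonhole infinitely many $n$ give the same $\theta_n = \theta$, and then differences $n - n'$ of two such give $g^{n-n'}(x_1)$ close to $x_1$. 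Making the ``close'' quantitative uniformly — using uniform continuity of the finitely many iterates of $g$ needed, or rather re-deriving it directly as recurrence — is the routine-but-careful part; everything else (density of recurrent points, behaviour under iterates, behaviour under finite covers) is standard. Alternatively, one can bypass recurrent points entirely and argue directly with open sets: given open $U_1 \subset M_1$, its image $U = \pi_1(U_1)$ is open in $M$, so there is $n$ with $f^{kn}(U) \cap U \neq \emptyset$; lifting a point of this intersection and chasing it around the finite fiber with a pigeonhole on the deck group again produces a return of $U_1$ to itself under a power of $g$. I would present whichever of these two is shorter; both hinge on the same finite-pigeonhole step, which is the crux.
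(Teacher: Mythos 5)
Your overall strategy matches the paper's: reduce to density of recurrent points, lift a recurrent point to the finite cover, and use a pigeonhole argument on the finite deck group. You also correctly identify the pigeonhole step as the crux. However, there are two genuine gaps in your sketch.

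First, you write ``Thus $y_1 = \theta(x_1)$ for a deck transformation $\theta$'' and later invoke ``deck-transitivity,'' but for a general finite cover the deck group does \emph{not} act transitively on fibers. The paper's very first step is to pass (at the cost of a further finite cover and iterate) to a \emph{normal} cover, which is precisely what legitimizes this step; you do not perform that reduction, and without it the displayed equality is unjustified.

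Second, and more seriously, your claim that ``$g$ commutes with deck transformations'' is false in general, even for a normal cover. What is true is that $g$ \emph{normalizes} the deck group $V$: since $\pi_1 \circ g\theta g^{-1} = \pi_1$ for any $\theta \in V$, conjugation by $g$ induces an automorphism of $V$, but there is no reason for that automorphism to be trivial (it is, essentially, $(f^k)_*$ acting on $\pi_1(M)/\pi_1(M_1)$). Your concatenation argument (``$g^{N_j}(x_1) \to x_1$ along a suitable subsequence obtained by concatenating $m$ blocks'') relies on commutation to conclude that the accumulated deck transformations multiply out to $\theta^m = \mathrm{id}$; without commutation each block contributes a possibly different deck transformation and the product need not be trivial. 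Your alternative ``pigeonhole on approximate deck transformations $\theta_n$'' also does not close: from $g^n(x_1)$ and $g^{n'}(x_1)$ both near $\theta(x_1)$ you cannot deduce $g^{n-n'}(x_1)$ near $x_1$ without a uniform modulus of continuity for $g^{n-n'}$ that is uniform in $n-n'$, which you do not have. The paper resolves exactly this by a different device: it defines a self-map $\eta$ of the \emph{finite} set $V$ (recording where lifts of $x$ accumulate under $g$), observes that $\eta$ is a bijection so some power returns $\mathrm{id}$ to $\mathrm{id}$, and then builds a finite chain of open neighborhoods $U_i$ around the points $\eta^{-i}(\mathrm{id})(x)$ with $g^{m_i}(U_i) \subset U_{i+1}$, concluding non-wandering from the composite. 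That chain-of-neighborhoods construction is what replaces both the commutation you assumed and the uniform continuity your pigeonhole needs, and it is the part that should be added to make your argument correct.
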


\begin{proof} 
Let $g: M_1 \rightarrow M_1$ be a finite lift of an iterate of $f$.
Up to taking a further cover and another iterate 
we can assume that 
$M_1$ is a normal cover.

Since $\Omega(f) = M$, the set of recurrent points of $f$ is dense
in $M$. This is because for a basis of the topology $\{U_n\}_n$ we have that the sets

$$A_n \  = \ \{ x \in U_n \ : \ \exists k>1 \ , \ f^k(x) \in U_n \} 
\ \cup \ \overline{U_n}^c$$

\noindent
 are open and dense and a point in $\bigcap_n A_n$ must be recurrent. 

Also recurrent points are non-wandering points for iterates of $f$:
a recurrent point $x$ is in the omega limit set $\omega(x,f)$ of $x$ with respect to $f$. It is easy to see that

$$\omega(x,f) \ \ = \ \ \omega(x,f^n) \cup \omega(f(x),f^n) \cup \cdots
\cup \omega(f^{n-1}(x), f^n).$$

\noindent
Hence $x$ is in $\omega(f^i(x),f^n)$ for some $0 \leq i < n$.
Finally since $x$ is in the omega limit set of some point
for $f^n$, then $x$ is in the non-wandering set of $f^n$.
This implies that the result is true for iterates of $f$.

So we can assume that $g$ is a lift of $f$ and not of an iterate. 
Let $x$ be the lift to $M_1$ of a
recurrent point. We want to show that $x$ is non-wandering for $g$.
Since the non-wandering set is closed and the lifts of recurrent points of $f$ is dense we would then get
$\Omega(g) = M_1$. 

The cover $M_1 \to M$ is normal, so the group $V$
of deck transformations acts transitively on lifts of points.
Let $\gamma$ be one such deck transformation.
As $\gamma(x)$ is the lift of a recurrent point, there are $n_j \to \infty$
so that $g^{n_j}  \gamma(x) \to \beta(x)$, where $\beta$ is also some deck
transformation of $M_1 \to M$. 
Up to taking a subsequence of $(n_j)$ (still denoted by 
$(n_j)$) we may assume that for
any $\alpha$ in $V$ there is $\delta$ in $V$ so that
$g^{n_j} \alpha(x) \rightarrow \delta(x)$. 
This defines a map $\eta: V \to V$ by $\eta(\alpha) = \delta$.
The important thing to notice that $\eta$ is a bijection of a finite set.
Hence there is a minimal $k > 0$ so that $\eta^k(id) = id$.

If $\beta = \eta^{-1}(id)$ then there is a subsequence $n_j$ 
as above so that $g^{n_j}(\beta(x)) \to \eta(\beta)(x) = x$.
Now take a neighborhood $U$ of $x$, Let $U_{k-1}$ be an open
set around $\eta^{-1}(id)(x) = \beta(x)$ so that for 
some large $m_{k-1}$ one
has that

$$g^{m_{k-1}} (U_{k-1}) \ \subset \ U$$

\noindent
This exists because $g^{n_j}(\eta^{-1}(id)(x))$ converges to $x$.

Similarly, inductively construct $U_i$ neighborhood of $\eta^{-i}(id)(x)$
and $m_i$ so that $g^{m_i}(U_i) \subset U_{i+1}$. Once one
has constructed $U_1$, it follows that for some $m_0$ one
has that $g^{m_0}(x)$ is in $U_1$ and then
$$g^{m_0 + m_1 + \ldots m_{k-1}} (x) \ \in \ U.$$
\noindent
This shows in fact that $x$ is recurrent for $g$,
and hence non-wandering as well.
This shows that if $\Omega(f) = M$, then $g$ is non wandering.
\end{proof}

\begin{lemma}\label{fmin}
Let $f: M\to M$ be a partially hyperbolic diffeomorphism homotopic to the identity and such that the non-wandering set of $f$ is all of $M$.
Then $f$ has no contractible periodic points.
\end{lemma}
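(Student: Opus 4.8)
The plan is to prove Lemma \ref{fmin} by contradiction, combining the finite‑lift trick from Lemma \ref{fmin2} with the no‑contractible‑periodic‑points statement of Theorem \ref{teo.nocontractible}. Suppose $f$ has a contractible periodic point, i.e. some power $\widetilde f^k$ of a good lift $\widetilde f$ has a fixed point in $\mt$. The obstacle to applying Theorem \ref{teo.nocontractible} directly is that it requires two hypotheses that $f$ may not satisfy a priori: that $f$ admit invariant branching foliations $\fbs,\fbu$, and that one of them be $f$‑minimal. The first is handled by the standard device recorded after Theorem \ref{teo-BI}: pass to a finite lift $g$ of an iterate of $f$ on a finite cover $M_1$ so that the bundles $E^s,E^c,E^u$ become orientable and $Dg$ preserves the orientations; then $g$ has invariant branching foliations by Burago–Ivanov. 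By Lemma \ref{fmin2}, $g$ is still non‑wandering on $M_1$, and a contractible periodic point of $f$ lifts to a contractible periodic point of $g$ (a fixed point of $\widetilde g^j$ for a good lift $\widetilde g$), so it suffices to derive a contradiction for $g$.

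Next I would address $f$‑minimality. If one of the branching foliations, say $\fbs$, is already $g$‑minimal on $M_1$, we are done by Theorem \ref{teo.nocontractible} since $\pi_1(M_1)$ is not virtually solvable (it is finite index in $\pi_1(M)$, and virtual solvability is a commensurability invariant). Otherwise there is a proper non‑empty closed $\fbs$‑saturated $g$‑invariant set; one restricts attention to a minimal such set and, if necessary, passes to a further finite cover to arrange minimality — here one invokes that a closed saturated invariant set with the appropriate structure, together with the first item of Theorem \ref{t.HHU} / Remark \ref{r.HHU2}, produces the needed $I$‑bundle complementary structure, so that the minimal piece carries a genuine minimal (branching) foliation after blowing down. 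Throughout, Lemma \ref{fmin2} guarantees non‑wandering survives each cover and iterate, and contractibility of periodic points is preserved under lifting.

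The main obstacle I expect is precisely the bookkeeping around minimality: arranging, via finite covers and iterates, that the branching foliation restricted to (the relevant piece of) the manifold is $g$‑minimal, without destroying the non‑wandering property — this is exactly why Lemmas \ref{fmin2} and \ref{minimal} were set up earlier, and the proof should be a fairly short assembly of those ingredients plus Theorem \ref{teo.nocontractible}. Once minimality is in place, Theorem \ref{teo.nocontractible} applies verbatim to $g$ on $M_1$, giving that $\widetilde g^j$ has no fixed point in $\widetilde{M_1}$, contradicting the lifted contractible periodic point. Hence $f$ has no contractible periodic points, which is the statement of the lemma.
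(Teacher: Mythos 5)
Your overall skeleton is right (pass to a finite lift $g$ of an iterate to get branching foliations, use Lemma~\ref{fmin2} to preserve non-wandering, then apply Theorem~\ref{teo.nocontractible} and pull the contradiction back to $f$), but the way you handle the $f$-minimality hypothesis has a genuine gap, and this is precisely the step that carries all the content.

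You propose, if $\fbs$ is not $g$-minimal, to ``pass to a further finite cover to arrange minimality'' and to invoke Theorem~\ref{t.HHU}/Remark~\ref{r.HHU2} to get an $I$-bundle complementary structure and blow down. Neither move works here. First, taking a further finite cover cannot create minimality: a proper non-empty closed $\fbs$-saturated $g$-invariant set on $M_1$ simply lifts to a proper such set on any finite cover, so the obstruction persists. Second, Theorem~\ref{t.HHU} and Remark~\ref{r.HHU2} concern laminations tangent to $E^s\oplus E^u$ (the $su$-lamination $\Lambda^{su}$), not the branching foliations $\fbs,\fbu$ tangent to $E^{cs},E^{cu}$; there is no $I$-bundle structure available for complementary regions of a non-minimal $\fbs$, so the blow-down you describe has no basis.

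The correct (and much shorter) argument is dynamical, not topological. Suppose $\fbs$ is not $g$-minimal, and let $\Lambda$ be a minimal non-empty closed $\fbs$-saturated $g$-invariant set, which is then a proper subset of $M_1$. Since $\Lambda$ is $\fbs$-saturated, the direction transverse to $\Lambda$ is the unstable direction $E^u$, along which $g$ expands; hence $\Lambda$ is a proper repeller. A proper repeller is incompatible with the non-wandering set being all of $M_1$ (points in a neighborhood of $\Lambda$ but off $\Lambda$ leave and never return), and Lemma~\ref{fmin2} guarantees that $g$ \emph{is} non-wandering. This contradiction shows $\fbs$ is $g$-minimal, and then Theorem~\ref{teo.nocontractible} applies directly. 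Everything else in your outline — lifting a contractible periodic point of $f$ to one of $g$, and the fact that $\pi_1(M_1)$ is not virtually solvable — is fine.
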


\begin{proof} Up to taking a finite lift $g$ of an iterate of $f$ we can assume that the bundles of $g$ are orientable and its orientation is preserved by $Df$. This allows us to apply Theorem \ref{teo-BI} to get branching foliations $\fbs,\fbu$, for the map $g$. 

Assume, say that $\fbs$ is not minimal. Then a minimal $g$-invariant set $\Lambda$ of $g$ is a proper repeller (because it is transversally unstable). This implies that $g$ cannot be non-wandering (see \cite[Section 1.1.2]{CP}).

By the previous lemma, this forces $f$ also to be non-wandering,
contrary to hypothesis. Hence $g$ is non-wandering.

Now we can apply Theorem \ref{teo.nocontractible} to $g$ which implies that $f$ cannot have contractible periodic points either, since one such point would give rise to a contractible periodic point for $g$. 
This finishes the proof.
\end{proof}

\section{Case I $-$ Fixed leaves}\label{s.FL}
This section will be devoted to the proof of Proposition \ref{p.fixedleaf}.

\vskip .1in
\noindent
{\bf {Assumption in section \ref{s.FL}}} $-$
The hypothesis in this section are those of Proposition \ref{p.fixedleaf}. In particular, no need to assume that $M$ is hyperbolic. We will assume that there is a leaf $\tilde \Lambda^{su}$ which is fixed by a good lift $\ft$ of $f$ to $\mt$. 
\vskip .06in

We start by showing that leaves of $\Lambda^{su}$ whose
lifts are fixed by $\ft$ all have cyclic fundamental group,
and there is at least one which is not a plane. 

Suppose that $C$ is a leaf of $\Lambda^{su}$ which has a lift
$L$ to $\mt$ which is fixed by $\ft$. 
The goal to obtain Proposition \ref{p.fixedleaf} is to show
that this assumption on the existence of $C$ leads
to a contradiction.
Since $\ft$ commutes
with deck transformations, then $\ft$ fixes any lift of $C$ to
$\mt$. Hence this is a property of $C$ and not of the particular lift.

The proof of the following lemma will make use of the theory of \emph{axes} for free actions on leaf spaces of foliations, 
we refer the reader to \cite{Fen2003} or \cite{Ro-St}
 for a general account or \cite[Appendix E]{BFFP} for a more direct account for use in a similar context. 

\begin{lemma}\label{l.cyclicstab}
Let $L \in \widetilde \Lambda^{su}$ which is fixed by $\ft$. Then, $\mathrm{Stab}_L :=\{ \gamma \in \pi_1(M)  \ : \ \gamma L = L\}$ is cyclic. 
\end{lemma}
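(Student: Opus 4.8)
The plan is to argue by contradiction: suppose $\mathrm{Stab}_L$ is not cyclic. Since $L$ is a leaf of a lamination without compact leaves, $\pi_1(\pi(L))$ injects into $\pi_1(M)$ and equals $\mathrm{Stab}_L$, which therefore is the fundamental group of an open surface $\pi(L)$; a non-cyclic surface group acting freely and properly discontinuously on the plane $L$ must then contain a rank-two free subgroup (the only non-cyclic surface groups are free of rank $\geq 2$ or closed surface groups, but $L$ is non-compact so $\pi(L)$ is an open surface with free fundamental group). So it suffices to produce two elements $\alpha,\beta \in \mathrm{Stab}_L$ generating a free group of rank two and derive a contradiction from the fact that $\ft$ fixes $L$ and is bounded distance from the identity.

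The key tool is the theory of axes for the action of $\pi_1(M)$ on the leaf space $\cL_{\widetilde\fol}$ (or on the leaf space of $\widetilde\Lambda^{su}$ inside it): each $\gamma \in \mathrm{Stab}_L$ acts on the one-dimensional (possibly non-Hausdorff) leaf space, and since $\gamma$ fixes $L$ it acts on $\cL_{\widetilde\fol}$ with a fixed point, but on the leaves of $L$ itself — more precisely, one looks at how $\gamma$ permutes the leaves of the strong stable/strong unstable foliations \emph{inside} $L$, which give two transverse one-dimensional foliations of the plane $L \cong \R^2$. First I would set up these two transverse foliations $\cW^s\cap L$ and $\cW^u\cap L$ on $L$; each is a foliation of $\R^2$ with Hausdorff leaf space $\cong \R$ (since they come from the ambient foliations and there are no Reeb components inside a leaf of a taut-type object), and $\mathrm{Stab}_L$ acts on both leaf spaces. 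An element of a surface group acting on $\R$ without fixed points is a "translation-like" element with an axis in $L$; two such elements generating a free group of rank two would have axes that fellow-travel or cross in controlled ways. The heart of the argument is that $\ft$ commutes with $\mathrm{Stab}_L$, fixes $L$, moves points inside $L$ a bounded amount (Remark \ref{r.boundedmovement}), and must therefore be compatible with the axis structure; pushing this through, a rank-two free subgroup forces $\ft$ to have unbounded displacement on $L$, or forces a fixed point of some $\ft^k$ (a contractible periodic point), contradicting either the good-lift hypothesis or Theorem \ref{teo.nocontractible}/Lemma \ref{fmin} via $\pi_1(M)$ not virtually solvable.

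The main obstacle I anticipate is making the "axis" bookkeeping precise in the non-Hausdorff leaf space and controlling the interaction of the two transverse foliations inside $L$ simultaneously — essentially showing that a free rank-two subgroup of $\mathrm{Stab}_L$ cannot act on the plane $L$ compatibly with a bounded-displacement commuting homeomorphism $\ft$. I would handle this by reducing to the standard picture: $\mathrm{Stab}_L$ cyclic is equivalent to $\pi(L)$ being an annulus, Möbius band, or plane (the only open surfaces with cyclic $\pi_1$), so the negation means $\pi(L)$ carries an essential pair of curves with nonzero geometric intersection, hence corresponding $\alpha,\beta$ whose axes in $L$ (for the $\cW^s$-leaf-space action, say) are distinct and "linked" — and then the bounded-displacement $\ft$, commuting with both, would have to translate along both axes consistently, which is impossible unless the axes coincide, i.e. $\langle\alpha,\beta\rangle$ is cyclic. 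I expect the clean way to finish is to invoke the structure already developed in \cite[Appendix E]{BFFP} for free actions on leaf spaces, so the remaining work is just verifying its hypotheses apply to $\cW^s \cap L$ and $\cW^u \cap L$.
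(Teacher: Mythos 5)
Your plan correctly identifies the crucial objects---the axes of $\ft$ and of $\mathrm{Stab}_L$ acting on the strong stable and strong unstable leaf spaces \emph{inside} $L$, and the lever of commutation with the good lift $\ft$---but it contains a false assertion and omits the decisive step. The claim that $\cW^s\cap L$ and $\cW^u\cap L$ have Hausdorff leaf space $\cong\R$ is wrong in general. These leaf spaces are simply connected one-manifolds that may fail to be Hausdorff, and the paper's proof works precisely with this generality: the axis of a free action is either a line or a $\ZZ$-union of disjoint closed intervals, and each case needs its own argument. Also, the prerequisite that $\ft$ and all of $\mathrm{Stab}_L$ act \emph{freely} on $\cL^s_L$ is not a side remark to be invoked at the end; it is the starting point. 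For $\ft$ this comes from Lemma~\ref{fmin}: no contractible periodic points, hence $\ft$ cannot fix a stable leaf, since the stable contraction would produce a fixed point. For $\mathrm{Stab}_L$ it comes from the absence of closed strong stable leaves in $M$.

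The decisive step is also missing. You assert that for $\alpha,\beta\in\mathrm{Stab}_L$, ``the axes coincide, i.e.\ $\langle\alpha,\beta\rangle$ is cyclic.'' That equivalence is false: a rank-two group of translations of $\R$ acts freely with a common axis (the whole line) and is not cyclic. What \emph{is} true is that since $\alpha,\beta$ commute with $\ft$, they share $\ft$'s axis (\cite[Lemma 3.11]{Fen2003}); the group then acts freely either on a line or on the copy of $\ZZ$ indexing the intervals of the axis, and in both cases the group is \emph{abelian}---by H\"older's theorem in the line case (\cite[Proposition E.2]{BFFP}) and by the free $\ZZ$-action in the interval case. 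Abelian plus the fact that $\pi_1(\pi(L))$ is free (because $\pi(L)$ is a non-compact leaf, as $\Lambda^{su}$ has no compact leaves) gives cyclic. The paper's proof proceeds directly---establish abelian, then deduce cyclic---rather than by contradiction from a rank-two free subgroup; that difference is only stylistic, but the gaps above are substantive and would remain under your contradiction framing.
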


\begin{proof}
Suppose that $L$ is fixed by $\ft$. In this case we proceed 
as in the proof of \cite[Proposition 3.14]{BFFP}:  
By Lemma \ref{fmin} the map $\ft$ has no periodic points.
It follows that $\ft$ does
not fix any stable leaf in $L$: otherwise if $s$ is such a leaf,
since $\ft$ is a contraction in the stable leaves,
there would an $\ft$ periodic point in $s$.
Hence $\ft$ acts freely on the stable leaf space of $L$, i.e. $\cL^s_L = L/_{\widetilde \cW^s}$. Therefore, there is an axis $\mathrm{Ax}^s$ of $\ft$ acting on $\cL^s_L$. 
Similarly $\ft$ acts freely on the unstable leaf space
$\cL^u_L$ and has an axis $\mathrm{Ax}^u$.

The axis is the set of leaves $s$ so that $\ft(s)$ separates
$s$ from $\ft^2(s)$ in $L$. In \cite{Fen2003} it is shown
that if $\ft$ acts freely on the leaf space, then 
the axis is non empty, and the axis 
 is either a line or a $\ZZ$-union of disjoint intervals in 
$\cL^s_L$ (see also \cite[Appendix E]{BFFP}). In the second case 
$$\mathrm{Ax}^s \ = \ \bigcup_{i \in {\ZZ}}
I_i \ = \ \bigcup_{i \in {\ZZ}} [x_i,y_i]$$

\noindent
Here $[x_i,y_i]$ are closed intervals in the leaf space
$\cL^s_L$ and $y_i$ is non separated from $x_{i+1}$ in 
$\cL^s_L$.

In addition, there are no closed stable leaves in $M$, hence any deck transformation fixing $L$ must act freely on $\cL^s_L$ too. As deck transformations commute with $\ft$ it follows that any deck transformation fixing $L$ must have the same axis as $\ft$ (see \cite[Lemma 3.11]{Fen2003}).

This implies that deck transformations all act without fixed points in the single set
$\mathrm{Ax}^s$. If $\mathrm{Ax}^s$ is a line, then
by H\"{o}lder's theorem the group must be abelian (see \cite[Proposition E.2]{BFFP}). In other words,
the fundamental group of the leaf must be abelian. 
If the axis is an infinite union of intervals, then the fundamental
group of $\pi(L)$ acts on this collection without any
fixed points, that is $\pi_1(\pi(L))$ acts freely on $\ZZ$.
Again this implies that $\pi_1(\pi(L))$ is abelian.
Since there are no closed leaves of $\cF$ one obtains that $\mathrm{Stab}_L$ is cyclic. 
\end{proof}

\begin{lemma}\label{l.onenotplane}
Suppose that $\ft$ fixes a leaf of $\widetilde \Lambda^{su}$.
Then there exists at least one leaf $L$ of $\widetilde \Lambda^{su}$ which is fixed by $\ft$ and has stabilizer $\mathrm{Stab}_L :=\{ \gamma \in \pi_1(M)  \ : \ \gamma L = L\}$ which is isomorphic to $\ZZ$. 
\end{lemma}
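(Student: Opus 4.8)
The plan is to show that if every leaf of $\widetilde \Lambda^{su}$ fixed by $\ft$ had trivial stabilizer (equivalently, projected to a plane leaf in $\Lambda^{su}$), then we would reach a contradiction with the fact that $\Lambda^{su}$ has no compact leaves and that $\pi_1(M)$ is not virtually solvable. By Lemma \ref{l.cyclicstab} each such stabilizer is already cyclic, so the only alternative to $\mathrm{Stab}_L \cong \ZZ$ is $\mathrm{Stab}_L$ trivial, i.e.\ $\pi(L)$ a simply connected leaf. So I must rule out the possibility that \emph{all} the $\ft$-fixed leaves of $\widetilde \Lambda^{su}$ project to planes.

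First I would recall, as proved in \cite[Proposition 3.7]{BFFP} (used already in the proof of Corollary \ref{lami}), that the set of leaves of $\widetilde \Lambda^{su}$ fixed by $\ft$ is closed and $\pi_1(M)$-invariant, hence projects to a nonempty compact sublamination $\Lambda'$ of $\Lambda^{su}$; by Remark \ref{r.boundedmovement} there is a uniform bound on how far $\ft$ moves points inside leaves of $\widetilde{\Lambda'}$. Now suppose for contradiction that every leaf of $\Lambda'$ is a plane. Then I would try to run a minimal-set argument: pick a minimal sublamination of $\Lambda'$ (it is still all planes); a minimal lamination all of whose leaves are planes, in a closed $3$-manifold, forces strong restrictions — either it is a single compact leaf (a sphere or projective plane, excluded since $\Lambda^{su}$ has no compact leaves, and in any case excluded by $\pi_1(M)$ not virtually solvable), or, blowing down the complementary $I$-bundles as in Proposition \ref{grhyp}, one obtains a minimal foliation by planes of some $3$-manifold, which by Rosenberg's theorem (invoked in Case 1 of the proof of Theorem \ref{minimality}) forces the manifold to be $\T^3$, contradicting that $\pi_1(M)$ is not virtually solvable. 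This already shows $\Lambda'$ cannot consist entirely of planes, so some fixed leaf $L$ has $\mathrm{Stab}_L$ nontrivial, hence $\cong \ZZ$ by Lemma \ref{l.cyclicstab}.

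The main obstacle I anticipate is the delicate point that the ambient manifold $M$ in Case I is \emph{not} assumed hyperbolic (only $\pi_1(M)$ not virtually solvable), so I cannot appeal to atoroidality; the argument must be purely in terms of $3$-manifold topology of laminations by planes, and one has to be careful that the blow-down really produces a genuine foliation (this needs the complementary regions to be $I$-bundles, which is exactly what Theorem \ref{t.HHU} gives) and that Rosenberg's theorem applies. A secondary technical point is making sure minimality passes to the blow-down and that one is not in the degenerate single-compact-leaf case — but that case is immediately killed by the no-compact-leaves hypothesis on $\Lambda^{su}$. Once those points are handled the conclusion is immediate.
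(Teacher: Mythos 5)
Your argument is correct, but it goes by a genuinely different route than the paper's.

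The paper's proof of Lemma \ref{l.onenotplane} keeps Rosenberg's theorem at arm's length: it only uses it once, to produce \emph{some} leaf $C$ of $\cF$ with nontrivial $\pi_1$, then transfers $\gamma\in\pi_1(C)\setminus\{\mathrm{id}\}$ to a leaf $B\in\Lambda^{su}$ via the $I$-bundle structure of the complementary regions, and argues about the lift $\hat B$. Since $B$ is not a plane, $\hat B$ is (by the contradiction hypothesis) not $\ft$-fixed, so Proposition \ref{prop.dich} produces the precisely invariant set $V=\bigcup_n\ft^n(U)$ whose boundary leaves \emph{are} $\ft$-fixed, hence planes. Then an octopus decomposition of the completion $W$ of $\pi(V)$, together with irreducibility of $M$, forces $\pi(V)$ to be a $\mathrm{disk}\times(0,1)$, contradicting that $\pi(V)\supset B$ with $\pi_1(B)\neq 1$. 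Your proof instead passes directly to the unique minimal sublamination $\Lambda\subset\Lambda^{su}$ (Lemma \ref{minimal}), observes $\Lambda\subset\Lambda'$ so $\Lambda$ consists of planes, blows down the complementary $I$-bundles to get a foliation by planes, and applies Rosenberg once to conclude $M=\T^3$. That is cleaner and avoids the precisely-invariant-set and octopus-decomposition machinery entirely; it also avoids having to quote irreducibility separately (it is implicit via Palmeira). What the paper's route buys in exchange is that it does not invoke Lemma \ref{minimal} at all, so it is logically independent of the minimality-and-blow-down circle of ideas.

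One small imprecision in your write-up: you say the $I$-bundle structure of the complementary regions ``is exactly what Theorem \ref{t.HHU} gives,'' but Theorem \ref{t.HHU} gives it for $\Lambda^{su}$, not a priori for a minimal sublamination of $\Lambda'$. To get it for that sublamination you need it to be a closed $f$-invariant lamination tangent to $E^s\oplus E^u$ so that Remark \ref{r.HHU2} applies, and the cleanest way to see $f$-invariance is to note that any minimal sublamination of $\Lambda'$ is also a minimal sublamination of $\Lambda^{su}$, hence equals the unique (and $f$-invariant) $\Lambda$ of Lemma \ref{minimal}. Worth stating explicitly, but it is a one-line fix.
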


\begin{proof}
Assume by way of contradiction 
that all leaves $C$ of $\Lambda^{su}$ which have
a lift $L$ to $\mt$ fixed by $\ft$ are planes (in other words
if $L$ is such a lift, then $\mathrm{Stab}_L$ is trivial, or 
equivalently
$\pi_1(C)$ is trivial). 
Since only $\T^3$ admits a foliation by planes (see \cite{Rosenberg}), one has to have one leaf $C$  of $\cF$ with 
$\pi_1(C)$ not trivial, and 
$\pi_1(C)$ contains $\gamma \in \pi_1(M) - id$.
Since complementary regions to $\Lambda^{su}$ are $I$-bundles, it follows that there is a leaf $B \in \Lambda^{su}$, so that $\gamma$ is in $\pi_1(B)$.
Lift $B$ to a leaf $\hat B$ of $\widetilde \Lambda^{su}$, so that
$\hat B$ is fixed by $\gamma$.
By assumption in the beginning of this paragraph,
$\hat B$ cannot be fixed by $\ft$. By Proposition \ref{prop.dich} 
the completion $U$ of the region between $\hat B$ and $\ft(\hat B)$ is an $I$-bundle.
In addition since $\ft$ has fixed leaves, then
$V= \bigcup_{n\in\ZZ} \ft^n(U)$ is not all of $\mt$, and all leaves
in $\partial V$  are fixed by $\ft$.  
Here $\hat B$ is in $\widetilde \Lambda^{su}$, and each leaf
in $\partial V$
is accumulated by $\ft^n(\hat B)$ with $n \rightarrow \infty$ or
$n \rightarrow -\infty$.  Hence any leaf in $\partial V$ is
in $\widetilde \Lambda^{su}$. Since each such leaf $L$ is fixed
by $\ft$, then $\pi(L)$ it must be a plane, by the assumption
in the beginning of this paragraph.

Again, by Proposition \ref{prop.dich} the set $V$ is precisely
invariant, hence $\pi(V)$ is an open, $\fol$ saturated set,
which is not all of $M$ and whose leaves in the boundary
are all in $\Lambda^{su}$ and are all planes.
One can do the 
octopus decomposition of the completion $W$
of $\pi(V)$ (see \cite[Proposition 5.2.14]{CandelConlon}). 
The decomposition is $W = K \cup D$ (not unique), where
$K$ is compact, and $D$ is an $I$-bundle and very thin (meaning that local product structure boxes are not completely contained in $D$ so that center curves go from side to side).
It follows from the fact that boundary leaves of $\pi(V)$ are planes, that $\pi(V)$ has to be an $I$-bundle, that is,
a  disk times an open interval.
(This uses that $M$ is irreducible.)

But $\pi(V)$ contains $\pi(\hat B)$ which does not have trivial fundamental
group, contradiction.
This completes the proof of the lemma.
\end{proof}
\begin{proof}[Proof of Proposition \ref{p.fixedleaf}.]
Suppose that there is a leaf of $\widetilde \Lambda^{su}$
that is fixed by $\ft$.
We proceed as in \cite[Section 4]{BFFP} to get a contradiction.
By the previous lemma, there is 
a leaf $L$ of $\widetilde \Lambda^{su}$, 
which is fixed by both $\ft$ and $\gamma \in \pi_1(M) \setminus \{ \mathrm{id} \}$.
We will work in $L$.
We just sketch the main arguments and refer the reader to \cite{BFFP} for full details. We remark again that several arguments are simpler in this setting because we know precisely how the dynamics of both foliations look like.

The leaf $L$ has stable and unstable one dimensional foliations, on
which both $\ft$ and $\gamma$ act.

\begin{itemize}
\item As shown in the proof of Lemma \ref{l.cyclicstab}, the maps $\ft$ and $\gamma$ act freely on both $\cL^s_L$ and $\cL^u_L$.
Since $\ft$ and $\gamma$ commute they both share the same
axis $\mathrm{Ax}^s$ in $\cL^s_L$,
and $\mathrm{Ax}^u$ in $\cL^u_L$.
Each of these axes is either a line or a $\ZZ$-union of intervals. 

\item An unstable leaf in $L$ 
cannot intersect a stable leaf $s$ in $\mathrm{Ax}^s$ 
and its image $\gamma(s)$. Otherwise a graph transform argument (as in
e.g. \cite[Appendix H]{BFFP}) would 
produce a closed unstable leaf in $\pi(L)$, which is impossible. 

\item This allows us to find a stable leaf $s_1$ which is fixed by 
$g := \gamma^n \circ \ft^m$ where $m > 0$, see  \cite[Lemma 4.6]{BFFP}. 
This map $g$ is also partially hyperbolic and with uniform constants. 
It also follows that $s_2=\gamma^{-n}s_1 = \ft^m s_1$ is at bounded distance from $s_1$ in $L$,
because $\ft$ moves points a bounded distance inside $L$ (Remark \ref{r.boundedmovement}). 

\item The next step is to show that leaves of $\widetilde \cF$ are 
Gromov hyperbolic\footnote{Here there is a substantial difference with
\cite{BFFP}. There we prove Gromov hyperbolicity for the
leaves of $\fbs, \fbu$ using partial hyperbolicity that gives information on the dynamics transverse to the foliation. In our situation, transverse to the $\Lambda^{su}$ lamination
is the center direction. A priori this could be contracting, expanding
or neither under $f$. Hence this step necessitates a different proof
from what is done in \cite{BFFP}.}.This was done in Corollary \ref{cor.gromov}. Then by Candel's result \cite{Candel},
$M$ admits a metric that makes all leaves of $\cF$ of constant negative curvature.

\begin{figure}[ht]
\begin{center}
\includegraphics[scale=1.70]{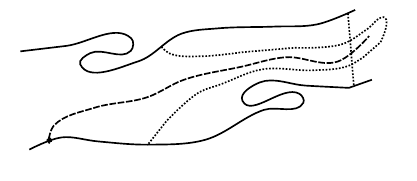}
\begin{picture}(0,0)
\put(-310,20){$p$}
\put(-140,130){$s_2$}
\put(-200,22){$s_1$}
\put(-250,52){$u(p)$}
\end{picture}
\end{center}
\vspace{-0.5cm}
\caption{{\small Such a configuration gives a contradiction with the fact that there are points in $s_1 \setminus \{p\}$ which are mapped very far away, because $g$ cannot map curves of bounded length to arbitrarily long curves.}\label{f.case1}}
\end{figure}

\item Using a comparison with hyperbolic isometries, it  is possible to show that there are points in $s_1$ which are mapped arbitrarily far apart from themselves by $g$ in each ray of $s_1 \setminus \{p\}$ (where $p$ is the fixed point in $s_1$ for $g$). 
This is  \cite[Lemma 4.11]{BFFP}. 

\item One can find an unstable leaf $u_1$ in $L$, fixed by $g$ and
separating $s_1$ from $s_2$ in $L$ (see \cite[Lemma 4.3]{BFFP}).
Using the previous remark and the fact that the band between $s_1$ and
$s_2$ in $L$ has bounded width, this 
 forces the unstable $u_1$ to be coarsely contracting under $g$ like in  \cite[Lemma 4.15]{BFFP} (see figure \ref{f.case1}).
This is a contradiction because $m > 0$.
\end{itemize}
This finishes the proof of Proposition \ref{p.fixedleaf}. 
\end{proof}

\section{Case II $-$ Translation case on
$\widetilde \fol$}\label{s.TR} 

Here we assume that $\ft$ does not fix any leaf of 
$\widetilde \Lambda^{su}$, or equivalently that
$\widetilde h$ does not fix any leaf of $\widetilde \fol$.

\vskip .1in
\noindent

We apply Proposition \ref{prop.dich} to $\fol$ and
$h$, and we obtain that $\fol$ is $\R$-covered and
uniform and that $\widetilde h$ acts as a translation, up to changing the parametrization of the leaf space, we can assume that $\widetilde h$ is an increasing homeomorphism. 

\subsection{Discretized Anosov flow case}\label{ss.DAF}
In this subsection we prove Proposition \ref{p.transl1}. 
The goal is to prove that $f$ cannot be a discretized Anosov flow.
The proof is done under more general assumptions:

\vskip .1in
\noindent
{\bf {Assumption in subsection \ref{ss.DAF}}} $-$ We assume
that $\pi_1(M)$ is not virtually solvable, and therefore, there are no
tori tangent to $E^s \oplus E^u$ (c.f. Remark \ref{rem.toriimplysolv}).


\vskip .06in
Assume by way of contradiction 
that $f$ is a discretized Anosov flow. In particular
$f$ is dynamically coherent 
\cite[Proposition G.2]{BFFP2}, and there exists a (topological) 
Anosov flow $\Phi$ and a positive continuous function $\tau: M \to \RR_{>0}$ so that $f(x)= \Phi_{\tau(x)}(x)$ and center leaves of $f$ are the orbits of $\Phi$. 
In particular since $M$ is compact, there are $a, b > 0$ so that
$a < \tau(x) < b$. Notice that one could apply Theorem \ref{teo.AnosovFlows} if one showed that the flow $\Phi_t$ is regulating for 
$\cF$. This can be done, but we chose to give a more direct proof (which uses similar ideas to the proof of Theorem \ref{teo.AnosovFlows}).  

Since $\Phi_t$ is a (topological)  
Anosov flow, \cite[Corollary E]{Fen1998} shows
that either $\Phi_t$ is topologically conjugate to a suspension
Anosov flow or there are periodic orbits which are freely
homotopic. In fact in the second case there are periodic
orbits $\eta_1$ and $\eta_2$ of $\Phi_t$ which are freely homotopic but with different orientation
(see \cite[Corollary 4.5]{Fen1998}, or \cite[Section 2]{BarthFenley}). See also \cite{BarbotFenley21} and Lemma \ref{lem.orientation} below for the pseudo-Anosov case. 

In other words the following happens:
lift $\eta_i$ coherently to $\widetilde \eta_i$ orbits
of $\widetilde \Phi$. Let $\gamma$ be a deck transformation
associated with $\eta_1$ in the flow forward direction,
hence associated with $\eta_2$ in the backwards direction,
so $\gamma$ preserves both $\widetilde \eta_1$ and
$\widetilde \eta_2$.
Choose points $x_i$ in $\widetilde \eta_i$. Then
\begin{itemize}
\item $\gamma \widetilde x_1 = \widetilde \Phi_{t_1}(\widetilde x_1)$ with $t_1>0$ and,
\item $\gamma \widetilde x_2 = \widetilde \Phi_{t_2}(\widetilde x_2)$ with $t_2 <0$. 
\end{itemize}
Now we will consider $\ft^n(\widetilde x_1)$ as $n \rightarrow \infty$.
Since $f$ is a discretized Anosov flow, then $\ft^n(\widetilde x_1)$
is in $\widetilde \Phi_{\R} (\widetilde x_1)$, and it
is equal to $\widetilde \Phi_{t_n} (\widetilde x_1)$ where
$t_n > an$, since each application of $\ft$ moves at least
$a > 0$ time forward along the orbit. But the flow line $\widetilde \eta_1$ is
a bounded distance from $\widetilde \eta_2$ (because $\eta_1$,
$\eta_2$ are freely homotopic). Therefore 
$\widetilde \Phi_{t_n}(\widetilde x_1)$ is a bounded distance from 
$\widetilde \Phi_{r_n} (\widetilde x_2)$, and here $r_n \rightarrow 
-\infty$ as $n \rightarrow \infty$. This is because $\eta_1$ is
freely homotopic to the inverse of $\eta_2$. But the points
$\ft^i(\widetilde x_2), i < 0$ are evenly spaced along the negative
ray $\widetilde \Phi_{t < 0}(\widetilde x_2)$.
This is because $\tau(x) < b$ for all $x$. It follows
that $\ft^n(\widetilde x_1)$ is a bounded distance from
$\ft^{i_n} (\widetilde x_2)$, where $i_n \rightarrow -\infty$
as $n \rightarrow \infty$.

This will give a contradiction, the idea is that  since $\Phi$ is transverse to $\cF$ orbits must move all in the \emph{same direction with respect to $\cF$}. We carry the details below. 

\vskip .1in
At this point we will only use the maps 
$f, \ft$ and the foliations $\fol$ and $\widetilde \fol$.
First we need more information about the foliation $\fol$.
Here $\fol$ is $\R$-covered and does not have any compact
leaves. One can apply \cite[Proposition 2.6]{Fen2002} and get that
$\fol$ can be collapsed to a minimal
foliation $\cE$ which is minimal 
and which is obtained from $\fol$ by collapsing at most
a countably many foliated $I$-bundles of $\fol$ to
single leaves. This collapsing is homotopic to the identity
and can be lifted to $\mt$: the distances are distorted 
a bounded additive amount.

Now $\cE$ is $\R$-covered, uniform and minimal. 
By \cite[Theorem 2.7]{Th2} one has that $\fol$ is a
{\em slithering} of $M$ around ${\mathbb S}^1$. Slithering
means that there is a fibration $\nu: \mt \rightarrow 
{\mathbb S}^1$
and the deck transformations of $\mt \rightarrow M$ induce
bundle automorphisms. In other words the pre images of $\nu$ form
a foliation in $\mt$ that is $\pi_1(M)$ invariant, and
induces a foliation in $M$, which in this case is $\cE$.
The slithering comes with a coarsely defined distance \cite{Th2}
between leaves $E, E'$ of $\widetilde \cE$:
Parametrize the circle ${\mathbb S}^1$ as $[0,1]$ with $0$ identified
to $1$. The universal cover of the circle is $\R$, where
the generator deck transformation is a translation by
$1$. Now pick one
lift $\widetilde \nu: \nu \rightarrow \R$ of $\nu$.
Given $E, E'$ leaves of $\widetilde \cE$, define
the rough distance $z(E,E')$ as the absolute value
of $\widetilde \nu(E) - \widetilde \nu(E')$. 
In \cite{Th2} Thurston is more careful defining this ``pseudo-distance"
to be an integer, but there is a bounded error between the definitions.
Then \cite[Theorem 2.6]{Th2} shows that the distance between any
two points $x \in E$ and $y \in E'$ is bounded below by a constant
times $z(E,E')$.

Now we are ready to finish the proof of Proposition 
\ref{p.transl1}.
Suppose that $\widetilde x_i$ is in $L_i$, where $L_i$ are
leaves of $\widetilde \fol$.
Then $\ft^n(\widetilde x_1)$ is in
$L_{u_n}$, with $u_n \rightarrow \infty$. We proved 
above that 
$\ft^{i_n}(\widetilde x_2) \in L_{v_n}$, with $v_n \rightarrow -\infty$,
both with $n \rightarrow \infty$.
In addition the distance from $\ft^n(\widetilde x_1)$ to
$\ft^{i_n}(\widetilde x_2)$ is uniformly bounded in $n$.
Since leaves of $\widetilde \cF$ are a uniformly 
bounded distance from leaves in $\widetilde \cE$, the same holds
for the projection of the points to leaves of $\widetilde \cE$.
But the previous paragraph shows that the distance between
any points in $L_{v_n}$ and $L_{u_n}$ is converging to 
infinity because $z(L_{v_n},L_{u_n})$ is converging to
infinity. This is a contradiction.

This shows that this case cannot happen.
This finishes the proof of Proposition \ref{p.transl1}.



\subsection{Pseudo-Anosov flows transverse
to $\R$-covered foliations}

To analyze the case that $\ft$ does not fix
any leaf of $\widetilde \Lambda^{su}$ and is a double translation (in the sense of Theorem \ref{teo.clasif}) we will need some properties
of pseudo-Anosov flows. (This case will be called \emph{triple translation}, cf. subsection \ref{ss.Triple}.) 

Each of the $3$ foliations will come equipped with a transverse pseudo-Anosov flow and to pursue our arguments we need to be able to compare such flows. This is the purpose of this section. We remark that the results proved in this subsection are completely general and can be of independent interest. 

Let  $\cG$ be a foliation or a branching foliation 
(such as $\fbs, \fbu$) in
a hyperbolic $3$-manifold $M$. Let $g$ be a homeomorphism homotopic
to the identity preserving $\cG$. 
If $\cG$ is a branching 
foliation, then we assume that it is approximated 
arbitrarily close by a Reebless foliation, as in the
case of the branching foliations
$\fbs, \fbu$ of a partially hyperbolic diffeomorphism.
As explained in \cite[Section 3]{BFFP2}, one can talk 
about the leaf space of $\wg$. It is the same as the leaf
space of the approximating Reebless foliation.
Suppose that this leaf 
space is $\R$ and that $\cG$ is uniform and transversely orientable.
Here uniform has the same definition as that for regular
(unbranched) foliations.
Then, by Theorem \ref{teo.regul}  there is a 
pseudo-Anosov flow $\Phi_t$ transverse to $\cG$ and
regulating for it (see also \cite[Section 8]{BFFP}).   
This only works if $M$ is hyperbolic.

First we define a very weak form of free homotopy for
periodic orbits of flows.

\begin{definition}{(freely homotopic orbits)} Let 
$\Phi_1, \Phi_2$ be two flows in a closed 3-manifold. We say that
periodic orbits  $\alpha_1$ of $\Phi_1$ and $\alpha_2$ of
$\Phi_2$ are \emph{freely homotopic,} if there are 
$i, j \in \ZZ$, both non zero, 
 so that $\alpha_1^i$ is freely homotopic
to $\alpha_2^j$ as unoriented closed curves.
\end{definition}

In other words,  we do not care about powers or orientations
in this definition.

We will use the following known fact about freely homotopic orbits of pseudo-Anosov flows (see e.g. \cite[Section 2]{BarbotFenley21}) whose proof we indicate for the convenience of the reader familiar with the notion of lozenges.  

\begin{lemma}\label{lem.orientation}
Let $\Phi$ be a pseudo-Anosov flow in a 3-manifold $M$ which has two distinct periodic orbits $\alpha_1, \alpha_2$ which are freely homotopic. Then, there is a pair of orbits $\zeta_1, \zeta_2$ of $\widetilde{\Phi}$ in $\mt$ which are fixed by the same deck transformation $\gamma \in \pi_1(M)$ and such that $\gamma$ acts as a translation on $\zeta_1$ in the same direction as the flow $\widetilde{\Phi}_t$ while it acts as a translation in the opposite direction to the flow on $\zeta_2$. \end{lemma}
\begin{proof}

By \cite[Theorem 4.1]{Fen1998} (which also works for
pseudo-Anosov flows, see e.g. \cite[Proposition 2.11]{BarbotFenley21}) coherent lifts of $\alpha_i$ to 
$\mt$ are connected by a finite chain of lozenges,
and all corners of the lozenges are left invariant
by the same non trivial deck transformation $g$.
We refer to \cite[Section 2]{BarbotFenley21} for the definition of lozenges
and related terms.
If $\zeta_1, \zeta_2$ are corners orbits of a single
lozenge in this chain, then $\pi(\zeta_i) = \eta_i$
are periodic orbits of $\Phi$ which are freely homotopic
to the inverse of each other as oriented closed curves.
Let $\gamma$ be
a deck transformation representing $\eta_1$ as an oriented
closed curve, and hence $\gamma$ represents $\eta_2^{-1}$. This proves the claimed result. 
\end{proof}

Using this, we will prove a general result about pseudo-Anosov flows regulating to an $\mathbb{R}$-covered foliation.

\begin{proposition}{}\label{conjugate}
Let $\Phi_i$, $i = 1, 2$ be pseudo-Anosov flows 
transverse and regulating to a foliation or
a branched foliation $\cG_i$ in the same hyperbolic
3-manifold $M$. (The foliations $\cG_i$ may be different from
each other.)
Suppose that every periodic orbit of $\Phi_1$ is freely
homotopic to a periodic orbit of $\Phi_2$.
Then $\Phi_1$ is topologically orbit equivalent to $\Phi_2$,
by an equivalence that is homotopic to the identity.
\end{proposition}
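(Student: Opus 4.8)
The key objects associated to a pseudo-Anosov flow $\Phi_i$ transverse and regulating to $\cG_i$ are the two orbit spaces and the action of $\pi_1(M)$ on them. When $\cG_i$ is $\R$-covered and uniform, Thurston's slithering (see Theorem \ref{teo.regul} and the discussion around it) identifies the orbit space $\cO_i$ of $\widetilde \Phi_i$ with the leaf space of $\widetilde \cG_i$ \emph{transverse data}: more precisely, since $\Phi_i$ is regulating, every leaf of $\widetilde \cG_i$ is a global section of $\widetilde \Phi_i$ and the orbit space $\cO_i \cong \mathbb R^2$ can be identified (equivariantly under $\pi_1(M)$) with any leaf $L$ of $\widetilde \cG_i$, flowing each orbit to its unique intersection with $L$. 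The stable/unstable foliations of the pseudo-Anosov flow project to a pair of singular $1$-dimensional foliations on $\cO_i$ with $p$-prong singularities, invariant under $\pi_1(M)$. The plan is to show that the free homotopy hypothesis forces the two bifoliated planes $(\cO_1,\mathcal F^s_1,\mathcal F^u_1)$ and $(\cO_2,\mathcal F^s_2,\mathcal F^u_2)$, together with their $\pi_1(M)$-actions, to be equivariantly conjugate, and then invoke the fact that a flow transverse and regulating to an $\R$-covered foliation is reconstructed up to orbit equivalence from this data.

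\textbf{Step 1: from free homotopy to fixed points in the orbit space.} A periodic orbit $\alpha$ of $\Phi_i$ corresponds to an orbit of $\widetilde\Phi_i$ fixed by some $\gamma\in\pi_1(M)$, i.e.\ a fixed point of $\gamma$ acting on $\cO_i$. The hypothesis says: for every $\gamma$ that has a fixed point in $\cO_1$ (for $\widetilde\Phi_1$), some power $\gamma^i$ is conjugate in $\pi_1(M)$ to a power of an element $\beta$ having a fixed point in $\cO_2$ — but since $\alpha_1^i\simeq\alpha_2^j$ as \emph{oriented} curves means the corresponding deck transformations are actually conjugate (after taking the appropriate powers), one gets: the set of deck transformations with a fixed point in $\cO_1$ equals (up to taking powers and conjugacy) the set with a fixed point in $\cO_2$. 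I would first record that, because $M$ is hyperbolic, $\pi_1(M)$ has no torsion and free homotopy classes of curves are detected by conjugacy classes of (possibly non-primitive) elements; so the free homotopy hypothesis is equivalent to an \emph{equality of the ``periodic'' subsets of $\pi_1(M)$} for the two flows, up to roots.

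\textbf{Step 2: both flows are regulating for the same class of foliation, hence comparable.} Here is where I expect the real work. One wants to promote the coincidence of periodic data to an actual orbit equivalence. The natural route is: (a) use that $\Phi_1$ is regulating for $\cG_1$ to realize $\cO_1$ as a leaf $L_1$ of $\widetilde\cG_1$; similarly $\cO_2\cong L_2$; (b) use a result of Fenley (\cite{Fen2002}, or the machinery of \cite{BarthFenley}) that for pseudo-Anosov flows transverse to $\R$-covered foliations, the flow — indeed the foliation itself up to the collapsing in \cite{Fen2002} — is essentially determined by the action of $\pi_1(M)$ on the orbit space, and this action is in turn determined (by the density of periodic orbits, the spectral decomposition, and the structure of lozenges/chains of lozenges) by which group elements are periodic and the combinatorial arrangement of their fixed points. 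Concretely, I would build an equivariant homeomorphism $\cO_1\to\cO_2$ by: showing periodic orbits are dense in both (standard for pseudo-Anosov flows, \cite{FenleyTAMS}); matching up periodic orbits via the conjugacy-of-deck-transformations dictionary from Step 1; checking this matching respects the separation order and the lozenge structure (two periodic orbits are corners of a lozenge iff a related algebraic/freely-homotopic relation holds among the deck transformations, which is preserved); then extending by density and using that both $\mathcal F^{s,u}_i$ are determined by the closures of leaves through periodic points. This gives an equivariant homeomorphism of bifoliated planes conjugating the two $\pi_1(M)$-actions.

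\textbf{Step 3: descend to an orbit equivalence homotopic to the identity.} An equivariant homeomorphism $h\colon\cO_1\to\cO_2$ conjugating the stable/unstable foliations and the $\pi_1(M)$-actions lifts to an equivariant homeomorphism $\widetilde M\to\widetilde M$ sending $\widetilde\Phi_1$-orbits to $\widetilde\Phi_2$-orbits (choose any equivariant identification along the flow direction, e.g.\ via a common transversal leaf $L_1\leftrightarrow L_2$ and the regulating property), which descends to an orbit equivalence $M\to M$. That it is homotopic to the identity follows because the construction is compatible with both flows being transverse to foliations whose leaf spaces carry the $\pi_1(M)$-action trivially on $\pi_0$, and because $g$ (hence the identifications) were taken homotopic to the identity; alternatively, one notes the induced map on $\pi_1(M)$ is the identity (it conjugates each $\gamma$ to $\gamma$, by construction in Step 1), and in an aspherical $M$ this forces homotopy to the identity. \textbf{The main obstacle} I anticipate is Step 2: turning ``same periodic orbits up to free homotopy'' into an actual equivariant conjugacy of orbit spaces. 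This requires knowing that the periodic-orbit/free-homotopy data determines the lozenge structure and hence the whole bifoliated plane — this is exactly the sort of rigidity that Fenley's work on pseudo-Anosov flows (\cite{FenleyTAMS,Fen2002,BarthFenley}) is designed to supply, and I would lean on those results rather than reprove them, but assembling them into the clean statement above, and in particular handling singular $p$-prong orbits (whose free homotopy classes involve the $p$ prongs symmetrically), is the delicate part.
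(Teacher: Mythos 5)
Your Steps 1 and 3 match the paper's argument quite closely: you correctly reduce the free-homotopy hypothesis to a correspondence of conjugacy classes in $\pi_1(M)$ (and the observation that free homotopies between closed geodesics in a hyperbolic manifold are essentially unique), you correctly aim for an equivariant homeomorphism of orbit spaces, and you correctly finish by passing to an orbit equivalence and using asphericity of $M$ to conclude it is homotopic to the identity (the paper makes the orbit-space-to-orbit-equivalence step precise by citing Haefliger, Ghys and Barbot). The definition of the map on periodic points via the unique free homotopy is also the paper's definition.

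The gap is in Step 2, which you yourself flag as the main obstacle. You propose to extend the matching of periodic orbits to all of $\cO_1$ by checking that the matching respects separation order and the lozenge structure of the bifoliated planes, and then appeal to rigidity of bifoliated planes. That is a plausible blueprint, but it is not carried out, and the singular prong orbits are genuinely troublesome to handle by lozenge combinatorics alone. The paper does something different and cleaner at exactly this point: because $M$ is hyperbolic and $\Phi_i$ is regulating for a foliation, $\Phi_i$ is a uniform \emph{quasigeodesic} flow, so each orbit of $\widetilde\Phi_i$ is a bounded distance from the geodesic of $\HH^3$ with the same ideal endpoints, and (for a regulating pseudo-Anosov flow) no two orbits of $\widetilde\Phi_i$ share the same pair of ideal points. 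Thus the value of the map on a general orbit is pinned down by continuity of ideal points under limits of orbits, with no lozenge analysis needed: if periodic orbit lifts $\delta_n \to \delta$ in $\cO_1$, the corresponding $\beta_n$ in $\cO_2$ have ideal endpoints converging (uniform quasigeodesic constants), hence converge to a unique orbit $\beta$, independently of the subsequence. This also gives continuity and injectivity of the map for free. If you want to complete your write-up along your own lines you would need to actually prove the rigidity of the bifoliated plane from periodic data including at singular orbits; otherwise, replacing Step 2 with the quasigeodesic/ideal-point argument is the route the paper takes and the one I would recommend.
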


\begin{proof}{}
Since $M$ is hyperbolic, then $\Phi_i$ is transitive
\cite{Mosh},
and the union of periodic orbits is dense.
Since the flows are regulating for $\cG_i$, it follows
that the leaf space of $\widetilde \cG_i$ is homeomorphic
to $\R$ for $i = 1,2$.

Suppose that there are distinct periodic orbits $\alpha_1, \alpha_2$
of say $\Phi_1$ which are freely homotopic. By the previous lemma, we can get a deck transformation $\gamma \in \pi_1(M)$ and closed orbits $\eta_1, \eta_2$ of $\Phi_1$ such that they have lifts $\zeta_1, \zeta_2$ to $\mt$ such that $\gamma$ fixes $\zeta_1$ and $\zeta_2$ is a translation along $\zeta_1$ in the flow direction while it acts as a translation on $\zeta_2$ in the direction oposite to the flow. The first implies that $\gamma$ acts increasing on the leaf
space of $\widetilde \cG_1$ and the second implies that $\gamma$
acts decreasingly there, a contradiction. Therefore there is at
most one periodic orbit of $\Phi_1$ in each conjugacy class in $\pi_1(M)$.
Similarly for $\Phi_2$.

Since $M$ is hyperbolic, and $\Phi_1$ is regulating for
a foliation it follows that $\Phi_1$ is {\em quasigeodesic}
\cite{Th2,Fen2012}. This means that orbits are rectifiable
and when lifted to 
the universal cover length along any given orbit of $\widetilde \Phi_1$ 
is uniformly efficient up to a bounded multiplicative
distortion in measuring distance in $\mt$.
In other words if $\delta$ is such an orbit of $\widetilde \Phi_1$,
there is $K > 0$ so that for any $x, y$ in $\delta$ then

$$l_{\delta}(x,y) \ \ \ \leq \ \ \ K d(x,y) + K$$

\noindent
where $l_{\delta}(x,y)$ is the length along $\delta$ from
$x$ to $y$.
In addition there is a uniform bound see 
\cite[Lemma 3.10]{CalegariQG}. 
In other words there is a single $K$ so that the inequality
above works for all orbits.
Using properties of quasigeodesics in hyperbolic manifolds,
this implies that any orbit of $\widetilde \Phi_1$ is a
uniformly bounded distance from the corresponding geodesic
of $\HH^3$ with the same ideal points as this orbit (see e.g. \cite{Th1}).
The same holds for $\Phi_2$.

Let $\oo_i$ be the orbit space of the flow $\widetilde \Phi_i$.
In \cite{Fe-Mo} it is proved that $\oo_i$ is homeomorphic
to the plane $\R^2$. We will produce a homeomorphism
$\tau: \oo_1 \rightarrow \oo_2$ that is $\pi_1(M)$ equivariant.
This will produce the topological equivalence.

For any point $p$ in $\oo_1$ which is a lift of a periodic
orbit $\gamma_1$ of $\Phi_1$, then $\gamma_1$ is freely
homotopic to a periodic orbit $\gamma_2$ of $\Phi_2$. 
We remark that since $M$ is hyperbolic, there is essentially
one free homotopy between the corresponding powers of
$\gamma_1$ and $\gamma_2$ $-$ any two free homotopies
are homotopic.
In addition we proved in the beginning of the
proof that $\gamma_2$ is uniquely determined by $\gamma_1$.
Lift the homotopy from $\gamma_1$ to $\gamma_2$
so that the first orbit
lifts to the orbit in $\oo_1$ corresponding to $p$.
The second orbit lifts to an orbit of $\widetilde \Phi_2$ and
it corresponds to a point $q$ in $\oo_2$.
The second lift is uniquely determined by the uniqueness
of free homotopies above.
In addition since this is a lift of a free homotopy between
closed orbits, then $\gamma_1, \gamma_2$ are a finite
Hausdorff distance from each other. In particular $\gamma_1, \gamma_2$
have the same ideal points (in the sphere at infinity)
in the forward flow direction
and the same ideal point in the backwards flow direction.
This deals with orbits of $\widetilde \Phi_1$ which
are lifts of periodic orbits.

\vskip .1in
We want to extend this to all orbits of $\widetilde \Phi_1$.
Let then $\delta$ be any orbit of $\widetilde \Phi_1$. 
Since the periodic orbits of $\Phi_1$ are dense, there
are orbits $\delta_n$ of $\widetilde \Phi_1$, which are
lifts of periodic orbits and which converge to $\delta$.
Let $\beta_n$ be the corresponding lifts of periodic orbits of
$\Phi_2$. Notice  that any orbit of $\widetilde \Phi_i$ is a
uniformly 
bounded distance from the corresponding geodesic with the same
endpoints at the sphere at infinity.
The orbits $\delta_n$ intersect a fixed compact set
in $\mt$. 
Hence the same is true for the corresponding geodesics by
the above property. Since the orbits of $\Phi_2$ also
satisfy this property, it follows that the orbits
$\beta_n$ intersect a fixed compact set in $\mt$.
So there is a subsequence
$\beta_{n_j}$ which converges to an orbit $\beta$ of
$\widetilde \Phi_2$.
Since all orbits of $\widetilde \Phi_1$ (or $\widetilde \Phi_2$)
are quasigeodesics with uniform constants, then the ideal
points of the quasigeodesics 
$\beta_{n_j}$ also converge to the ideal points
of $\beta$ (c.f. \cite{Th1}).

There is an additional property that we will use:
since $\Phi_i$ is transverse and regulating
for a foliation $\cG_i$, it follows that no two orbits
of $\widetilde \Phi_i$  can
have the same forward and backwards ideal points (notice that for this we need the flow to be pseudo-Anosov and not just quasigeodesic).
This follows from results in
\cite{Fen2012}, we provide some explanations:
Consider $\Phi_1$ and $\cG_1$.
The foliation $\cG_1$ is $\R$-covered. Using \cite[Theorem G]{Fen2012}
we know that $\Phi_1$ does not have perfect fits\footnote{We do not need to
know here what is a perfect fit for an pseudo-Anosov flow.
We only need the statement.}.
With this property, 
\cite[Theorem D]{Fen2012} says that the flow ideal boundary
$\flow$ of $\Phi_1$ is equivariantly homeomorphic to the
ideal sphere of $\mt$.
Now, \cite[Theorem B]{Fen2012} states that $\flow$ is a quotient
of the ideal boundary of the orbit space $\oo_1$ of 
the flow $\widetilde \Phi_1$.
The ideal boundary of $\oo_1$ is a circle $\partial \oo_1$.
By \cite[Theorem B]{Fen2012} two points in $\partial \oo_1$  are sent to the
same point in the flow ideal boundary $\flow$ if and only if
they are either ideal points of the same stable
leaf or the same unstable leaf in $\oo_1$.
Let $\alpha$ be a flow line of $\Phi_1$. Its forward ideal
point in the flow ideal boundary $\flow$ is the ideal
point of the stable leaf in $\oo_1$ containing $\alpha$.
So if two flow lines have the same forward ideal point,
then they are in the same stable leaf.
Similarly if two flow lines have the 
same backwards ideal points then they are in the same unstable
leaf. So if two flow lines share both ideal points, they
are in the same stable and same unstable leaf, which
means that they are the same flow line.
The same holds for $\Phi_2$.

In particular the orbit $\beta$ of $\widetilde \Phi_2$
obtained above
is uniquely
determined by its ideal points.

\vskip .1in
Now we go back to the sequence of orbits $\beta_n$.
Let $p_n, q_n$ be the ideal points of $\beta_n$,
and let $a_n, b_n$ the ideal points of $\delta_n$.
Since $\beta_n$ corresponds to $\delta_n$ they have the
same pair of ideal points.
Up to switching $p_n, q_n$,
then $p_n = a_n, \ q_n = b_n$.
Therefore the ideal points of $\beta$ are the same
as the ideal points of $\delta$.
The analysis and properties of the last two paragraphs implies that
for any subsequence $\beta_{n_k}$ of $\beta_n$
which converges, it has to converge to $\beta$, 
because the ideal points $p_{n_k}, q_{n_k}$
of $\beta_{n_k}$ converge
to the ideal points of $\beta$. In other words,
since any such sequence has a convergent subsequence, then
the whole sequence $\beta_n$ converges to $\beta$.

In addition, more is true:
the same arguments show that for any sequence $\alpha_n$ of lifts of 
periodic orbits of $\Phi_1$ converging to $\delta$, the
corresponding orbits $\epsilon_n$ of $\widetilde \Phi_2$ 
converge to $\beta$ $-$ again this is because of the continuity
property of the ideal points. It follows that $\beta$ is uniquely
defined by the orbit $\delta$. 

This defines a map from $\oo_1$ to $\oo_2$. Again because of
the uniformity of the constants of quasigeodesic behavior, this
map is continuous. In addition, by the property that ideal
points determine the orbit, this map is a bijection onto
its image, which is homeomorphic to $\R^2$. The image
is also $\pi_1(M)$ invariant. Since $\Phi_2$ is transitive
\cite{Mosh},
there is a dense orbit, so this now implies that
the image is $\oo_2$.

\vskip .1in
Using the theory of classifying spaces, Haefliger \cite{Haefliger} 
proved that there is a homotopy equivalence of $M$ sending
orbits of $\Phi_1$ to orbits of $\Phi_2$. Subsequently
Ghys \cite{Ghys} and later
Barbot \cite{Barbot1996} upgraded
this to a homeomorphism that sends orbits to orbits\footnote{This argument seems to have appeared several times in several places. See \cite[Section 8]{HP-survey} for further references.}.
Up to reversing the flow direction, this
homeomorphism also preserves orientation along orbits. 


Since this homeomorphism sends periodic orbits to closed curves
freely homotopic to themselves, it follows that this 
homeomorphism acts trivially on $\pi_1(M)$. Since
$M$ is aspherical, this implies that the homeomorphism
is homotopic to the identity \cite{He}. 
\end{proof}

\subsection{Lefschetz fixed points}\label{ss.Index}  
Recall the Leftschetz formula 
for fixed points.
We refer to the monograph by Franks \cite[Chapter 5]{Franks} or \cite[Appendix C]{BFFP2}. 
for details and more generality. 
Here we only work in the context we shall use.

Let $P$ be a topological plane, $g: P \to P$ be a orientation preserving continuous map and $D \subset P$ a compact disk such that $\mathrm{Fix}(g) \subset \mathrm{int}(D)$. 

The Lefschetz index $\mathrm{Ind}(g)$ of $g$ in $P$ is defined to be the intersection number of the graph of $g$ with the diagonal in $P \times P$. 

We will use the following facts about the Lefschetz index (see \cite[Section 8.6]{KH} for more details): 

\begin{itemize}
\item If all fixed points of $g$ are isolated then $\mathrm{Ind}(g)$ is the sum of all the indices of each fixed point. Moreover, a (topologically) hyperbolic fixed saddle point is $-1$ and the index of a $p$-prong fixed point ($p \geq 3$) is $1-p$. 

\item If $g,h: P \to P$ are orientation preserving maps for which there is $R>0$ so that $d(g(x),h(x))< R$ and there is a disk $D \subset P$ so that for every $x \notin D$ one has that $d(g(x),x) > 2R$ then $\mathrm{Ind}(g) = \mathrm{Ind}(h)$. 

\end{itemize}

\subsection{Triple translation}\label{ss.Triple}

This section will use the setting of Proposition \ref{p.transl2} which is what is left to prove to complete the proof of Theorem B. Recall that we can work up to finite covers and iterates.

\vskip .1in
\noindent
{\bf {Assumption in subsection \ref{ss.Triple}}} The diffeomorphism $f$ is a non-accessible partially hyperbolic diffeomorphism of a hyperbolic 3-manifold $M$ and $f$ is a double translation (c.f. Theorem \ref{teo.clasif}). Moreover, a good lift $\ft$ of $f$ acts as a translation in the lamination $\widetilde \Lambda^{su}$ (which therefore completes to an $\RR$-covered foliation, see Corollary \ref{lami}).

\medskip

The assumption above implies that we can assume that
the diffeomorphism 
$f$ preserves branching foliations
$\fbs, \fbu$ and $\ft$ acts as a translation on both
leaf spaces of $\wfbs, \wfbu$.
Here $\fbs, \fbu$ are branching foliations, but are approximated
arbitrarily closely by actual foliations (as in Theorem \ref{teo-BI}).

Recall the foliation $\fol$ which is an extension of
the lamination $\Lambda^{su}$.
In addition in this case there is a homeomorphism $h$ homotopic
to $f$ so that $h$ preserves $\fol$.
There is a good lift $\widetilde h$ of $h$ to $\mt$,
so that $\widetilde h$ also acts as a translation on
the leaf space of $\widetilde \cF$, which is also $\R$.
Recall that $h = f$ when restricted to $\Lambda^{su}$,
and likewise for corresponding lifts to $\mt$.
Recall that $f$ does not a priori preserve $\fol$ in
the complement of $\Lambda^{su}$ (Corollary \ref{lami}).
This case is then called {\em triple translation}.
Up to a finite cover and iterate if necessary,
we may assume that $\fol$ is transversely orientable.
By Theorem \ref{teo.regul} there is a pseudo-Anosov flow
$\Phi_{su}$ which is transverse to $\fol$ and which is
regulating for $\fol$. This means that every orbit
of $\widetilde \Phi_{su}$ intersects every leaf of
$\widetilde \fol$.

We know that $f$ is a double translation meaning that $\widetilde f$ also translates $\widetilde \cW^{cs}_{bran}$ and $\widetilde \cW^{cu}_{bran}$ (which are therefore also $\RR$-covered and uniform).

Denote by $\Phi_{cs}$ and $\Phi_{cu}$ corresponding regulating pseudo-Anosov flows transverse to $\fbs, \fbu$ respectively. 
In particular, there are 3 pseudo-Anosov flows:
$\Phi_{cs}, \Phi_{cu}$ and $\Phi_{su}$.

We now state a consequence of  \cite[Proposition 10.2]{BFFP2} (see \cite[Proposition 11.1]{BFFP2} for a stronger statement):

\begin{proposition}\label{fixed}
For each $\gamma \in \pi_1(M)$ associated to a periodic orbit of $\Phi_{cs}$ (resp. $\Phi_{cu}$) there exists $z\in \mt$ which verifies that $\gamma^i \circ \widetilde f^k (z) = z$ for some $k \in \mathbb{Z}\setminus \{0\}$. 
\end{proposition}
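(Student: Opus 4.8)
The plan is to reduce the statement, via Proposition \ref{conjugate}, to a concrete statement about periodic orbits of the regulating pseudo-Anosov flows, and then extract the fixed point of $\gamma^i\circ\ft^k$ from the structure of a periodic orbit together with the fact that $\ft$ is a translation on the relevant leaf space. Fix $\gamma\in\pi_1(M)$ associated to a periodic orbit of $\Phi_{cs}$; by hypothesis $\ft$ is a double translation, so it translates the leaf space of $\wfbs$, and one knows (as in the cited results of \cite{BFFP}) that the regulating flow $\Phi_{cs}$ transverse to $\fbs$ has its periodic orbit structure tightly controlled by the $\ft$-action: every periodic orbit of $\Phi_{cs}$ projects to a closed curve whose $\pi_1$-class is, up to powers and orientation, realized as a fixed point of the action on the orbit space $\mathcal{O}_{cs}$ of some deck transformation, and the translation structure forces a matching between powers of $\gamma$ and iterates of $\ft$.

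The key steps, in order, would be: first, recall from \cite[Proposition 11.34]{BFFP} that if $\gamma$ corresponds to a periodic orbit of $\Phi_{cs}$ then, acting on the orbit space $\mathcal{O}_{cs}$ of $\widetilde\Phi_{cs}$, the element $\gamma$ fixes a point $\theta$ (the lift of that periodic orbit), and moreover the translation-like behaviour of $\ft$ on $\wfbs$ together with the regulating property pins down the way $\ft$ permutes the $\gamma$-fixed points on $\mathcal{O}_{cs}$. Second, use \cite[Proposition 14.1]{BFFP} to see that some power $\ft^k$ must actually fix one such point of $\mathcal{O}_{cs}$ \emph{up to composition with a power $\gamma^i$}: the point is that the set of $\gamma$-fixed orbits in $\mathcal{O}_{cs}$ is $\ft$-invariant and discrete (or at least has an $\ft$-invariant sub-structure on which $\ft$ acts with a periodic point), so by a pigeonhole/finiteness argument one finds $i,k$ with $\gamma^i\ft^k$ fixing a lift $\theta$ of an orbit of $\widetilde\Phi_{cs}$. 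Third, translate ``fixes a point of the orbit space $\mathcal{O}_{cs}$'' into ``fixes a point of $\mt$'': the orbit $\widetilde\Phi_{cs}$-line over $\theta$ is preserved by $\gamma^i\ft^k$, and since $\ft$ moves points a bounded distance while along that orbit the deck transformation $\gamma$ acts with a definite translation length, the composition $\gamma^i\ft^k$ acts on that (properly embedded) line as a homeomorphism; one then shows it has a fixed point by an intermediate-value / index argument — either directly because it is an orientation-preserving homeomorphism of $\R$ with controlled behaviour at $\pm\infty$, or by appealing to the Lefschetz index computation of subsection \ref{ss.Index}. The same argument applies verbatim to $\Phi_{cu}$.

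I expect the main obstacle to be the second step: showing that one can choose $i$ and $k$ \emph{simultaneously} so that $\gamma^i\circ\ft^k$ has a genuine fixed point, rather than merely preserving the relevant orbit line. The subtlety is that $\ft$ and $\gamma$ commute and both act on the orbit space $\mathcal{O}_{cs}$, $\gamma$ with a fixed point $\theta$ and $\ft$ as (essentially) a translation on the leaf space of $\wfbs$; one must argue that the $\ft$-orbit of the fixed-orbit-set of $\gamma$ returns, i.e. that $\ft$ restricted to this $\gamma$-invariant locus has a periodic point, which is exactly where \cite[Proposition 14.1]{BFFP} is doing the real work (it controls the dynamics of the translation on the set of lozenges/periodic orbits). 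Once that periodic point $\theta\in\mathcal{O}_{cs}$ of $\gamma^i\ft^k$ is in hand, producing the fixed point in $\mt$ on the corresponding flow line is comparatively routine: it is a one-dimensional fixed-point statement, and the bounded-distance property of the good lift $\ft$ together with the nonzero translation length of $\gamma$ along the periodic orbit guarantees the map on that line is proper with the right signs at the ends, so a fixed point exists.
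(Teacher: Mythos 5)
The paper does not prove Proposition~\ref{fixed} at all: it states it as a consequence of \cite[Propositions 11.34 and 14.1]{BFFP}, and the only hint given about that proof is the sentence immediately after the statement, namely that in \cite{BFFP} one ``use[s] the transverse pseudo-Anosov flow to make an index argument'' and ``crucially the fact that we have information of the transverse dynamics to (say) $\fbs$: transversely we have the unstable foliation $\cW^u$ and $f$ expands unstable length.'' Your proposal has the right high-level ingredients (translation structure, a pigeonhole argument producing $i,k$, and an index argument), but the vehicle you choose is wrong in a way that undermines the plan.

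The central problem is your use of the orbit space $\mathcal{O}_{cs}$ of $\widetilde\Phi_{cs}$ as the object on which $\ft$ acts. The good lift $\ft$ preserves $\wfbs$, not the regulating flow $\widetilde\Phi_{cs}$: the flow is only a topological device transverse to the foliation, and $f$ does not send $\Phi_{cs}$-orbits to $\Phi_{cs}$-orbits. So the phrase ``the set of $\gamma$-fixed orbits in $\mathcal{O}_{cs}$ is $\ft$-invariant'' has no meaning, and the pigeonhole step built on it collapses. What actually plays the role of your $\mathcal{O}_{cs}$ is the \emph{leaf space} of $\wfbs$: by hypothesis $\ft$ acts there as a translation, and a deck transformation $\gamma$ associated to a periodic orbit of the regulating flow $\Phi_{cs}$ also acts there as a translation (the periodic orbit, being regulating, advances monotonically through leaves). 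Since translations of $\R$ are cocompact and $\ft$, $\gamma$ commute, one finds $i,k\neq 0$ with $\gamma^i\ft^k$ fixing a \emph{leaf} $L$ of $\wfbs$ — a two-dimensional plane, not a flow line.

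Your third step is also mis-aimed: there is no $\widetilde\Phi_{cs}$-line preserved by $\gamma^i\ft^k$, so the one-dimensional intermediate-value argument cannot be run. What one does instead (and this is exactly the index argument the paper alludes to, mirrored in the proof of Proposition~\ref{conjugate2}) is to compare, on the fixed leaf $L$, the map $\gamma^i\ft^k$ with the map $\gamma^i\circ\mu$, where $\mu$ is the $\widetilde\Phi_{cs}$-flow-along map from $L$ to $\ft^k(L)$. These two self-maps of $L$ are a bounded distance apart, $\gamma^i\circ\mu$ has nonzero Lefschetz index (it fixes the lift of the periodic orbit, and that fixed point is a $p$-prong), and Lemma~\ref{l.unbounded} gives the ``large displacement outside a compact set'' hypothesis needed to equate the indices as in subsection~\ref{ss.Index}. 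That forces $\gamma^i\ft^k$ to have a fixed point in $L\subset\mt$. It is precisely here that one needs the hypothesis the paper flags and you omit: to make the index of the actual partially hyperbolic map $\gamma^i\ft^k$ on $L$ computable, one uses the one-dimensional foliation transverse to $\fbs$ inside $L$ (strong unstables) and the fact that $f$ uniformly expands it; that is what makes the fixed point topologically hyperbolic with index $-1$. Without invoking that transverse expansion your index count has no anchor, which is why the paper emphasises it.
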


In particular since $\ft$ acts freely on the leaf space
of $\wfbs$, then both $i, k$ in Proposition \ref{fixed} are 
non zero. To prove this result, in \cite{BFFP2} we use the transverse pseudo-Anosov flow to make an index argument, but we also use crucially the fact that we have information of the transverse dynamics to (say) $\fbs$: 
transversely we have the unstable foliation $\cW^u$ and
$f$ expands unstable length.
This is the reason why we cannot apply this result
directly to $\Lambda^{su}$ (even if $\Lambda^{su}$ were a true foliation):
transverse to $\Lambda^{su}$ we have the center bundle $E^c$.
But $f$ can contract, expand or leave invariant $E^c$ lengths
depending on the particular point.

We will need some properties relating the lifted flow
$\widetilde \Phi_{su}$ with the map $\widetilde h$. Recall
that $\widetilde h$ is an extension of $\ft|_{\widetilde \Lambda^{su}}$
to $\mt$, and $\widetilde h$  preserves $\widetilde \fol$. 
Let $L$ be a leaf of $\widetilde \fol$, which could be a leaf
of $\widetilde \Lambda^{su}$ or not. Then $\widetilde h$ induces
a map from $L$ to $\widetilde h(L)$. This maps any point $x$ to a 
point a bounded distance from $x$ it in $\mt$, because $\widetilde
h$ is a good lift of $h$. 

We define another map $\mu: L \to \widetilde h(L)$: for each
$x$ in $L$, consider the $\widetilde \Phi_{su}$ flow line
and define $\mu(x)$ to the intersection of this flow line
with $\widetilde h(L)$, that is: 

$$  x \in L \ \mapsto \ \mu(x) \ := \ \widetilde \Phi_{su}(x) \cap \widetilde h(L). $$

Since $\Phi_{su}$  is regulating for
$\fol$, there is always such an intersection and this is unique so this map is well defined.
This map $\mu$ depends on $L$, but for notational simplicity
we will omit this dependence. 
In addition since $\fol$ is uniform, then the distance in
$\mt$ from $x$ to $\mu(x)$ is uniformly bounded in $\mt$,
see  \cite{Fen2002}.  
Because $\fol$ is $\R$-covered this
implies that the distance in $\widetilde h(L)$
from $\widetilde h(x)$ to 
$\mu(x)$ is uniformly bounded above \cite{Fen2002}.

Another property we need is the following (see \cite[Lemma 8.5]{BFFP}): 

\begin{lemma}\label{l.unbounded}
For every $L \in \widetilde \fol$, $R$ a positive number,
and $\eta \in \pi_1(M)\setminus \{\mathrm{id}\}$ such that $\eta \circ \mu (L) = L$,
  there exists a compact set $K \subset L$ such that if $y \notin K$ then $d_{L}(\eta \circ \mu (y), y) > R. $
\end{lemma}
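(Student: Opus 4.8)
The plan is to transfer the estimate into the metric geometry of $\mt=\HH^3$, using the good lift $\widetilde h$ in three ways: to replace the flow map $\mu$ by $\widetilde h$ up to a bounded error, to exploit that $\widetilde h$ commutes with deck transformations, and to trade displacement inside the leaf $L$ for displacement of the loxodromic isometry $\eta$ of $\mt$. First I would reduce to the map $\eta\circ\widetilde h$. Since $\widetilde\Phi_{su}$ is regulating for $\widetilde\fol$, the map $\mu\colon L\to\widetilde h(L)$ is a bijection, so $\mu(L)=\widetilde h(L)$ as sets and the hypothesis $\eta\circ\mu(L)=L$ is equivalent to $\eta^{-1}(L)=\widetilde h(L)$; in particular $\eta\circ\widetilde h$ maps $L$ onto $L$. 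As recalled just before the statement, $d_{\widetilde h(L)}(\widetilde h(x),\mu(x))$ is bounded above by a constant $C$ independent of $x$, and $\eta$ restricts to a leafwise isometry $\widetilde h(L)\to L$, so $d_{L}(\eta\circ\mu(x),\eta\circ\widetilde h(x))\le C$ and it suffices to prove the estimate with $\eta\circ\mu$ replaced by $\eta\circ\widetilde h$ and $R$ by $R+C$. Moreover, since $\widetilde h$ is a good lift, $d_{\mt}(\widetilde h(y),y)\le K$ for all $y$, so for $w=\widetilde h(y)$ we get $d_{\mt}(\eta w,w)\le d_{L}(\eta\circ\widetilde h(y),y)+K$, and everything comes down to bounding $d_{\mt}(\eta w,w)$ from below along $w=\widetilde h(y)$ as $y$ leaves every compact subset of $L$.

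So suppose, for contradiction, that there is a number $R'$ and a sequence $y_n\to\infty$ in $L$ with $d_{L}(\eta\circ\widetilde h(y_n),y_n)\le R'$. Because $M$ is a closed hyperbolic $3$-manifold and $\eta\ne\mathrm{id}$, the deck transformation $\eta$ is loxodromic with axis $A_\eta$ and positive translation length; its displacement function $w\mapsto d_{\mt}(\eta w,w)$ is proper and its sublevel sets are metric tubes around $A_\eta$, so the points $w_n=\widetilde h(y_n)$ all lie in a single such tube $N_\rho(A_\eta)$. Since the leaves of $\widetilde\fol$ are properly embedded planes in $\mt$ and $\widetilde h$ restricts to a homeomorphism $L\to\widetilde h(L)$, the condition $y_n\to\infty$ in $L$ forces $w_n$ to leave every compact subset of the leaf $\widetilde h(L)$. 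Now $\langle\eta\rangle$ acts cocompactly on the tube $N_\rho(A_\eta)$: choosing a compact fundamental domain $\Delta$ and $k_n\in\ZZ$ with $\eta^{k_n}w_n\in\Delta$, and using that $\widetilde h$ commutes with $\eta$, we get $\eta^{k_n}w_n=\widetilde h(\eta^{k_n}y_n)$, so $\eta^{k_n}y_n$ stays in a fixed compact set $\Delta'\subset\mt$. If the $k_n$ were bounded, the $w_n=\eta^{-k_n}(\eta^{k_n}w_n)$ would stay in a fixed compact subset of $\mt$, hence of the properly embedded leaf $\widetilde h(L)$, contradicting that they escape; so, passing to a subsequence, $|k_n|\to\infty$.

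To close the argument I would pass to the leaf space $\cL_{\widetilde\fol}\cong\RR$ (we are in Case II, so $\fol$ is $\RR$-covered). The induced actions of $\widetilde h$ and $\eta$ on $\cL_{\widetilde\fol}$ commute, $\widetilde h$ acts as a fixed-point-free translation, and iterating $\eta^{-1}(L)=\widetilde h(L)$ together with the commutation gives $\eta^{-k}(L)=\widetilde h^{\,k}(L)$ for every $k\in\ZZ$, so the $\eta$-orbit of $[L]\in\cL_{\widetilde\fol}$ coincides with the unbounded $\widetilde h$-orbit of $[L]$; since an orientation-preserving homeomorphism of $\RR$ commuting with a translation has only bounded orbits once it has a fixed point, $\eta$ must act on $\cL_{\widetilde\fol}$ freely, hence as a conjugate of a translation. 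Then $\eta^{k_n}[L]$ leaves every compact subset of $\cL_{\widetilde\fol}$ as $|k_n|\to\infty$, whereas $\eta^{k_n}(L)$ meets the fixed compact set $\Delta'$, so $\eta^{k_n}[L]$ lies in the compact image of $\Delta'$ under the projection $\mt\to\cL_{\widetilde\fol}$ --- a contradiction. Running this with $R'=R+C$ then produces the desired compact set $K\subset L$.

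The main obstacle is the first reduction together with the cocompactness step: one must extract from the very weak hypothesis $\eta\circ\mu(L)=L$ enough rigidity to run a properness argument, which is exactly where the good-lift properties of $\widetilde h$ (bounded displacement in $\mt$, and commutation with $\eta$) are indispensable; and the step showing $\eta$ fixes no leaf of $\widetilde\fol$ is the one place that genuinely uses that $\widetilde h$ acts as a \emph{translation} on the $\RR$-covered leaf space rather than as an arbitrary automorphism.
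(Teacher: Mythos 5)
Your proof is correct, but it follows a genuinely different route from the one in the paper (which defers to \cite[Lemma 8.5]{BFFP} and describes the argument there as resting on the exponential expansion and contraction of the singular laminations that the regulating pseudo-Anosov flow $\Phi_{su}$ traces on the leaf $L$). You sidestep the pseudo-Anosov dynamics entirely: after the bounded-error replacement of $\mu$ by $\widetilde h$ (using the uniformity of $\cF$ as stated just before the lemma, and the fact that $\eta$ is a leafwise isometry), you convert the claim into a coarse-geometric properness statement in $\mt\cong\HH^3$, and close it using three ingredients the paper has set up but uses elsewhere: the loxodromicity of every $\eta\in\pi_1(M)\setminus\{\mathrm{id}\}$ for $M$ closed hyperbolic (so sublevel sets of the displacement function are $\langle\eta\rangle$-cocompact tubes around the axis), the good-lift properties of $\widetilde h$ (bounded displacement in $\mt$ and commutation with $\eta$), and the fact that $\widetilde h$ acts as a fixed-point-free translation on the leaf space $\cL_{\widetilde\fol}\cong\RR$. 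The net effect is a shorter, more elementary argument that trades the leafwise dynamical control of the pseudo-Anosov approach for a large-scale geometric one; the pseudo-Anosov route gives the finer information (exponential growth of the displacement and a picture of the attracting/repelling ideal points on $\partial L$) that the paper actually uses a few lines later in Proposition \ref{nonsing}, so it is the more economical choice there even though it is heavier for this lemma in isolation. One small remark: the paragraph in which you argue that $\eta$ acts freely on $\cL_{\widetilde\fol}$ is a detour you do not need --- the identity $\eta^{-k}(L)=\widetilde h^{\,k}(L)$, obtained directly from $\eta^{-1}(L)=\widetilde h(L)$ and commutation, already exhibits the $\eta$-orbit of the point $[L]\in\cL_{\widetilde\fol}$ as the unbounded $\widetilde h$-orbit, which is all that the final contradiction requires.
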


This is just a property about regulating pseudo-Anosov flows. In a nutshell, the exponential expansion and contraction of the foliations of the pseudo-Anosov flows and the fact that one has some control on the geometry of the intersection of the stable and unstable laminations of the pseudo-Anosov flow with the transverse foliation allow one to show that outside a compact set, points must move arbitrarily a lot. It is illustrating to check this for different lifts of a pseudo-Anosov homeomorphism of a surface (which would be the case of the suspension flow). 

We can now prove one main property we need.

\begin{proposition}{(equivalent flows)} \label{conjugate2}
The flows $\Phi_{cs}$
and $\Phi_{su}$ are topologically equivalent,
by an equivalence homotopic to the identity.
The same holds for $\Phi_{cu}$ and $\Phi_{su}$.
\end{proposition}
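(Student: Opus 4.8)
The strategy is to verify the hypothesis of Proposition \ref{conjugate}: we must show that every periodic orbit of $\Phi_{cs}$ is freely homotopic (in the weak sense, allowing powers and reversals) to a periodic orbit of $\Phi_{su}$, and vice versa. Since both flows are regulating for uniform $\R$-covered foliations ($\fbs$ and $\fol$ respectively) in the hyperbolic manifold $M$, Proposition \ref{conjugate} then immediately yields the topological equivalence homotopic to the identity. The same argument applies verbatim with $\Phi_{cu}$ in place of $\Phi_{cs}$.

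First I would take $\gamma \in \pi_1(M)$ associated to a periodic orbit $\alpha$ of $\Phi_{cs}$. By Proposition \ref{fixed} there exist $i,k \in \ZZ \setminus \{0\}$ and a point $x \in \mt$ with $\gamma^i \circ \ft^k(x) = x$; replacing $\gamma$ by $\gamma^i$ and $f$ by $f^k$ (we are free to pass to iterates, and $\gamma^i$ is still associated to a power of $\alpha$), we may assume $\gamma \circ \ft(x) = x$, i.e. $\ft(x) = \gamma^{-1}(x)$. Now let $L$ be the leaf of $\widetilde \fol$ through $x$. Since $\widetilde h$ agrees with $\ft$ on $\widetilde\Lambda^{su}$ and translates the leaf space of $\widetilde\fol$, and since $\ft$ translates the leaf space of $\widetilde\fol$ as well (the $x$ here is an $\ft$-periodic-mod-deck point, so one has to be a little careful: $\ft$ and $\widetilde h$ may differ off $\widetilde\Lambda^{su}$, but both are good lifts that translate $\widetilde\fol$ in the same direction). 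Consider the map $\mu: L \to \widetilde h(L)$ defined via the regulating flow $\widetilde\Phi_{su}$ as in the text. The point $x$, being moved by $\ft$ to a leaf transverse via $\widetilde\Phi_{su}$, together with the relation $\ft(x)=\gamma^{-1}(x)$, forces a fixed point of $\gamma \circ \mu$ on $L$ (after composing with the appropriate power of $\gamma$ to land back on $L$); that is, $\gamma$ must fix an orbit of $\widetilde\Phi_{su}$, hence $\gamma$ is associated (up to powers and orientation) to a periodic orbit of $\Phi_{su}$. This uses that $\gamma L = L$ because $\gamma$ fixes the $\ft$-orbit structure and $L$ is the leaf through the periodic point; and it uses Lemma \ref{l.unbounded} to guarantee that, if $\gamma \circ \mu$ had no fixed point on $L$, points would be pushed uniformly far, contradicting that $x$ is essentially fixed.

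For the reverse inclusion, I would take a periodic orbit of $\Phi_{su}$, corresponding to some $\gamma \in \pi_1(M)$ fixing an orbit $\delta$ of $\widetilde\Phi_{su}$, hence $\gamma$ fixes a leaf $L$ of $\widetilde\fol$ (since $\Phi_{su}$ is regulating and $\gamma$ preserves $\delta$ which meets $L$; a standard argument shows $\gamma L = L$). Restricting to $L$, $\gamma$ acts on $L$ with $\gamma \circ \mu(L) = L$, so by Lemma \ref{l.unbounded} either $\gamma \circ \mu$ has a fixed point on $L$ — giving an index argument via subsection \ref{ss.Index} that must then be matched by a fixed orbit structure of $\ft$ relative to $\gamma$, hence a periodic orbit of $\Phi_{cs}$ freely homotopic to the given one — or it is fixed-point free, which one rules out using the Lefschetz index comparison between $\gamma \circ \mu$ and $\gamma \circ \widetilde h$ (these are bounded distance apart, so by the second bullet of subsection \ref{ss.Index} they have the same index), combined with Proposition \ref{fixed} applied in the $\Phi_{su}$ direction — but $\Phi_{su}$ is transverse to $\Lambda^{su}$ where the transverse dynamics is only the center bundle, so here one instead runs the index argument directly on the flow $\Phi_{cs}$ side. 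The cleanest route is: a periodic orbit of $\Phi_{su}$ forces, by the index count of $\gamma\circ\mu$ acting on the leaf of $\widetilde\fol$ (which equals a leaf of $\widetilde\fbs$ bounded distance away via uniformity), that $\gamma$ together with some power of $\ft$ fixes a point, and then Proposition \ref{fixed}'s underlying mechanism — or rather its converse, the fact that a $\gamma$-fixed orbit of $\widetilde\Phi_{cs}$ must exist whenever $\gamma\ft^k$ has a fixed point near $\fbs$ — yields the desired periodic orbit of $\Phi_{cs}$.

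The main obstacle I anticipate is the reverse inclusion: Proposition \ref{fixed} is an input giving periodic-orbit-of-$\Phi_{cs}$ $\Rightarrow$ fixed point of some $\gamma\ft^k$, but we need the converse direction and we need it for $\Phi_{su}$, which is transverse to $\Lambda^{su}$ where we have no control of the transverse (center) dynamics, so the clean index arguments of \cite{BFFP} that exploit $f$-expansion on $\cW^u$ are unavailable. The resolution should be to do all index computations on the leaf of $\widetilde\fbs$ (or $\widetilde\fbu$) — where we do have the transverse contraction/expansion — and transfer the information to $\widetilde\fol$ using that $\fol$ extends $\Lambda^{su}$ and all three foliations are uniform $\R$-covered, so their leaves are mutually bounded Hausdorff distance apart and their regulating flows can be compared orbit-by-orbit as in the proof of Proposition \ref{conjugate}. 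Once both free-homotopy inclusions are established, Proposition \ref{conjugate} finishes the proof.
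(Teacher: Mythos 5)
Your overall plan — verify the free-homotopy hypothesis of Proposition \ref{conjugate} and invoke it — is the paper's strategy, and your forward-direction mechanism (Proposition \ref{fixed} plus a Lefschetz comparison between $\gamma^i\circ\ft^k$ and $\gamma^i\circ\mu$ on a leaf, using Lemma \ref{l.unbounded} and subsection \ref{ss.Index}) is also the right one. But there are two substantive problems.

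First, and most importantly, the ``vice versa'' is not needed: Proposition \ref{conjugate} only asks that every periodic orbit of $\Phi_1$ be freely homotopic to a periodic orbit of $\Phi_2$, not the converse. Its proof builds a continuous $\pi_1$-equivariant injection $\oo_1 \to \oo_2$ from this one-sided hypothesis (density of periodic orbits in $\Phi_1$), and then surjectivity comes for free from invariance of domain and transitivity of $\Phi_2$. You correctly identify that the reverse inclusion is the hard part and that the index machinery of \cite{BFFP} (which exploits transverse expansion/contraction along $\cW^u$ or $\cW^s$) is not available with $\Phi_{su}$ in the role of $\Phi_1$, since transverse to $\Lambda^{su}$ there is only the center bundle. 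That obstacle is real, and the paper's entire point is that it does not need to be overcome: the one-sided statement suffices.

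Second, in the forward direction you take $L$ to be the leaf of $\widetilde\fol$ through the Lefschetz point $x$, but $\ft$ only preserves $\widetilde\fol$ on $\widetilde\Lambda^{su}$ ($\widetilde h$ preserves $\widetilde\fol$ everywhere, but $\widetilde h \neq \ft$ off $\widetilde\Lambda^{su}$). Your parenthetical caveat acknowledges the gap but does not close it. The paper closes it by showing one may choose $x \in \widetilde\Lambda^{su}$: if $x$ lies in a complementary region, it sits on a compact center segment joining the two boundary leaves (Theorem \ref{t.HHU}); $\gamma^i\circ\ft^k$ fixes this segment, so it fixes its endpoints, and an endpoint is a legitimate replacement for $x$. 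After that, $L \in \widetilde\Lambda^{su}$ and $\ft^k|_L = \widetilde h^k|_L$, so both $\gamma^i\circ\ft^k$ and $\gamma^i\circ\mu$ are maps $L \to L$ at bounded distance and the index comparison is clean. Also note that the argument that a fixed point exists for $\gamma^i\circ\mu$ is not ``$x$ is essentially fixed'' but rather that the two self-maps have the same (nonzero) Lefschetz index $-1$, since $x$ is a hyperbolic saddle for $\gamma^i\circ\ft^k$ and Lemma \ref{l.unbounded} puts you in the setting of the second bullet of subsection \ref{ss.Index}.
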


\begin{proof}{}
We show the result for $\Phi_{cs}$ and $\Phi_{su}$.
Let $\alpha$ be a periodic orbit of $\Phi_{cs}$
represented by a deck transformation $\gamma$.
By Proposition \ref{fixed} there is $x \in \mt$ with
$\gamma^i \circ \ft^k (x) = x$, and $k > 0$. We want to show that $\gamma^i$ is associated to a periodic orbit of $\Phi_{su}$ so that we can apply Proposition \ref{conjugate}. 

First we want to show that $x$ can be chosen to
be in $\widetilde \Lambda^{su}$. Recall from Theorem \ref{t.HHU} 
that the completion of any complementary region of $\Lambda^{su}$ is an $I$-bundle, the center bundle is uniquely integrable
in this completion,  and any center segment in this completion
must connect the two boundary components in $\widetilde \Lambda^{su}$. 
Suppose then
that $x$ is not in $\widetilde \Lambda^{su}$. Then
$x$ is in a center segment which connects
two boundary leaves of $\widetilde \Lambda^{su}$ and 
this center segment is fixed by $\gamma^i \circ \ft^k$. In particular
if $y$ is an endpoint of the center segment through
$x$ connecting two boundary leaves of $\widetilde \Lambda^{su}$ then $\gamma^i \circ \ft^k(y) = y$.
So we can assume that $x$ is in $\widetilde \Lambda^{su}$.

Let $L$ be the leaf of $\widetilde \Lambda^{su}$ containing
$x$. 
As above consider the maps $\mu, \ft^k$ from $L$ to
$\ft^k(L) = \widetilde h^k(L)$. Then we have maps $\gamma^i \circ \mu$,
$\gamma^i \circ  \ft^k$ from $L$ to itself.
They are a bounded distance $R>0$ from each other.

Moreover, we know that $\gamma^i \circ \ft^k$ has at least one fixed point (since $\gamma^i \circ \ft^k(x) = x$) and it has Lefschetz index equal to $-1$ since it is a hyperbolic fixed point (because the action of $\gamma^i \circ \ft^k$ in $L$ is the composition of a isometry with a diffeomorphism with two hyperbolic bundles, so $x$ is a hyperbolic saddle, c.f. subsection \ref{ss.Index}). 

By Lemma \ref{l.unbounded} we know that the maps $\gamma^i \circ \ft^k$ and $\gamma^i \circ \mu$ move points more than $2R$ outside a compact set, so we are in the setting of subsection \ref{ss.Index}. In particular
$\gamma^i \circ \ft^k$ and $\gamma^i \circ \mu$ have the same
Lefschetz index. Then one can deduce that $\gamma^i \circ \mu$ has a fixed point of negative index. 

But the map $\mu$ is flow along $\widetilde \Phi_{su}$ from 
$L$ to $\ft^k(L)$. Therefore a fixed point of 
$\gamma^i \circ \mu$ produces
a periodic orbit of $\Phi_{su}$ which is represented
by a power of $\gamma$.

Therefore any periodic orbit of $\Phi_{cs}$ is freely
homotopic to a periodic orbit of $\Phi_{su}$. By Proposition \ref{conjugate} this implies that
$\Phi_{cs}$ is topologically equivalent to $\Phi_{su}$ 
by a topological equivalence homotopic to the identity.

This finishes the proof of the proposition.
\end{proof}

Now we show the following:

\begin{proposition}\label{nonsing}
Let $\gamma \in \pi_1(M)$ associated to a periodic orbit of $\Phi_{su}$ 
so that there exists $x \in \mt$ which belongs to a 
leaf $L \in \widetilde \Lambda^{su}$ so that $\gamma^i \circ \widetilde f^k (x) = x$. Then it follows that $x$ is the unique fixed point of $\gamma^i \circ \widetilde f^k$ in $L$ and that the orbit $\gamma$ is associated to a regular periodic orbit of $\Phi_{su}$ (which has index $-1$). 
\end{proposition}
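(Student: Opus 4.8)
The plan is to run a Lefschetz index computation on the leaf $L$, comparing $g:=\gamma^i\circ\ft^k$ with the ``flow along $\Phi_{su}$'' map, and to combine it with the fact that $g$ is uniformly hyperbolic on $L$. First I would record the basic structure. Since $\gamma^i\ft^k(x)=x$ we have $g(L)=L$; since $\ft$ commutes with deck transformations, $g^m=\gamma^{im}\circ\ft^{km}$ for every $m$; and (after the usual reduction, pushing $x$ along the center segment through it to a boundary leaf of $\widetilde\Lambda^{su}$ exactly as at the start of the proof of Proposition \ref{conjugate2}, the $\wfbs$ case being handled the same way using the transverse unstable foliation) we may take $L$ to carry transverse one–dimensional strong stable and strong unstable foliations. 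On such an $L$ the map $g$ is the composition of the isometry $\gamma^i\colon\ft^k(L)\to L$ (for the Candel metric supplied by Corollary \ref{cor.gromov}) with $\ft^k$, which uniformly contracts strong stables and uniformly expands strong unstables. Hence every fixed point of $g$ in $L$ is an isolated topological saddle of Lefschetz index $-1$, so $\mathrm{Ind}_L(g)=-\#\big(\mathrm{Fix}(g)\cap L\big)$.

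Next I would identify this index with a prong number. Let $\mu\colon L\to\ft^k(L)=\widetilde h^k(L)$ send $y$ to $\widetilde\Phi_{su}(y)\cap\widetilde h^k(L)$; this is well defined and at bounded distance $R$ from the identity since $\Phi_{su}$ is regulating for $\fol$ and $\fol$ is uniform. Then $\gamma^i\circ\mu\colon L\to L$ is at bounded distance $R$ from $g$, and by Lemma \ref{l.unbounded} it moves points more than $2R$ outside a compact set; the same then holds for $g$, so by the Lefschetz facts of Subsection \ref{ss.Index}, $\mathrm{Ind}_L(g)=\mathrm{Ind}_L(\gamma^i\circ\mu)$. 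A fixed point $q$ of $\gamma^i\circ\mu$ forces $\gamma^i$ to preserve the orbit $\widetilde\Phi_{su}(q)$, so this orbit is a lift of a closed orbit of $\Phi_{su}$ whose class is a power of $\gamma$; by the uniqueness of a closed orbit of $\Phi_{su}$ in each free homotopy class (the argument in the proof of Proposition \ref{conjugate}, using $\R$–coveredness of $\fol$ and \cite{Fen2002}) together with the cyclicity of centralizers in $\pi_1(M)$, the only such orbit is $\widetilde\alpha$, the lift of the orbit $\alpha$ represented by $\gamma$. Thus $\gamma^i\circ\mu$ has the single fixed point $q_0=\widetilde\alpha\cap L$, of index $1-p$ where $p\ge 2$ is the number of prongs of $\alpha$ (so $p=2$ precisely when $\alpha$ is regular). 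Combining, $\#\big(\mathrm{Fix}(g)\cap L\big)=p-1$.

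It remains to show that $g$ has a single fixed point in $L$; this is the step I expect to be the main obstacle, and it is the analogue — in the present, simpler setting where the leaf is subfoliated by strong stables and unstables — of the prong analysis of \cite{BFFP}. The approach I would take: a fixed point of $g$ determines a $g$–fixed strong stable leaf (with a unique fixed point on it, since $g$ contracts it) and a $g$–fixed strong unstable leaf (likewise), and two distinct fixed points have stable and unstable leaves that are pairwise non–crossing (a crossing plus a graph transform would produce a closed leaf), so the cross of one fixed point lies in a single quadrant cut out by the cross of the other. To exclude this one compares the strong stable/unstable foliations of $L$ with the possibly singular stable/unstable foliations cut on $L$ by the transverse regulating flow $\Phi_{su}$: passing to a power so that $g^m=\gamma^{im}\ft^{km}$ still has exactly $p-1$ saddle fixed points, a $p$–prong singular orbit with $p\ge 3$ would force a configuration of fixed separatrices and of $\widetilde\Phi_{su}$–orbits transverse to $\fol$ incompatible with $\fol$ being $\R$–covered and uniform. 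Carrying this out yields $\#\big(\mathrm{Fix}(g)\cap L\big)=1$, hence $p=2$: the orbit $\alpha$ is regular of index $-1$, and $x$ is its unique fixed point in $L$.
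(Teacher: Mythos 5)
Your setup is correct and matches the paper closely: you introduce the flow--along map $\mu$, observe that $g=\gamma^i\circ\ft^k$ and $\gamma^i\circ\mu$ are uniformly close on $L$ and both move points far outside a compact set (Lemma \ref{l.unbounded}), conclude $\mathrm{Ind}_L(g)=\mathrm{Ind}_L(\gamma^i\circ\mu)=1-p$, and note that every fixed point of $g$ in $L$ is a hyperbolic saddle of index $-1$, giving exactly $p-1$ fixed points. Your identification of the unique fixed point of $\gamma^i\circ\mu$ and the reduction to $x\in\widetilde\Lambda^{su}$ via the center segment are also the paper's.

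The gap is in the last step, which you yourself flag as the main obstacle: you do not actually prove that $p=2$. What you propose instead -- that two distinct fixed points of $g$ would give a ``cross in a quadrant'' configuration that is somehow incompatible with the $\widetilde\Phi_{su}$--orbits and $\RR$--coveredness of $\fol$ -- is left entirely unsubstantiated, and it is not the route the paper takes. The paper's argument here is a counting argument on the ideal circle $\partial L$ (where $L$ is compactified using the Candel metric): since $\gamma^i\circ\mu$ and $g$ act identically on $\partial L$, both have exactly $2p$ fixed ideal points, alternately attracting ($P_1,\dots,P_p$) and repelling ($N_1,\dots,N_p$). For each of the $p-1$ fixed saddles of $g$ in $L$, the two rays of its stable leaf have distinct ideal endpoints in $N$ and the two rays of its unstable leaf have distinct ideal endpoints in $P$, giving $4$ distinct ideal points per fixed point; and the $4$--tuples coming from distinct fixed points are pairwise disjoint (a shared stable ideal point would either yield a periodic unstable leaf limiting on a repelling ideal point or force a stable/unstable tangency, as in Figure \ref{f.mixed}). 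Thus $4(p-1)=2p$, whence $p=2$. Your proof as written does not contain this count, nor any replacement for it, so the conclusion $p=2$ is not established.
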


\begin{proof}
The proof is very similar to the proof that one cannot have
a ``mixed" behavior for a hyperbolic partially hyperbolic diffeomorphism
in a hyperbolic $3$-manifold $M$. This is done in  \cite[Section 12]{BFFP2}. 

In that case one had a center foliation and
a stable foliation in such a leaf $L$ and the center foliation may be branched. The analysis here is
much simpler because we have in $L$ a stable and an unstable 
foliation. In particular positive powers of $f$ are known 
to expand length along unstable leaves, as opposed to the
unknown actual behavior of $f$ along center leaves.
We explain the main steps.

As done in the previous propositions let $\mu$ be the
$\widetilde \Phi_{su}$ flow along from $L$ to $\ft^k(L)$.
Then $\gamma^i \circ \ft^k$ has non zero Lefschetz index in $L$, and
so does $\gamma^i \circ \mu$. Therefore $\gamma^i \circ \mu$ has a unique
fixed point $z$ in $L$. Up to taking powers, suppose that
$\gamma$ leaves invariant all prongs of the orbit through
$z$. If $z$ is a $p$-prong orbit, then the Lefschetz index
$\gamma^i \circ \mu$ is equal to $1 - p$, and hence so is the Lefschetz index
of $\gamma^i  \circ \ft^k$ (c.f. subsection \ref{ss.Index}).

Our goal is to prove that $p = 2$.
The leaf $L$ is quasi-isometric to the hyperbolic plane \cite{Candel},
and hence can be canonically compactified with an ideal circle
$\partial L$.
In particular $\gamma^i \circ \mu$ has exactly $2p$ fixed
points in $\partial L$. 
It follows that
the action of $\gamma^i \circ \ft^k(p)$ has exactly $2p$ fixed points
in $\partial L$, which are alternatively attracting (the set $P= \{P_1, \ldots, P_p\}$)
and repelling (the set $N=\{N_1, \ldots, N_p\}$).  This is because $\gamma^i \circ \mu$
and $\gamma^i  \circ \ft^k$ act in exactly the same way on $\partial L$,
as they are maps a bounded distance from each other in $L$ and 
then use Lemma \ref{l.unbounded} (see figure \ref{f.prong}).

\begin{figure}[ht]
\begin{center}
\includegraphics[scale=0.60]{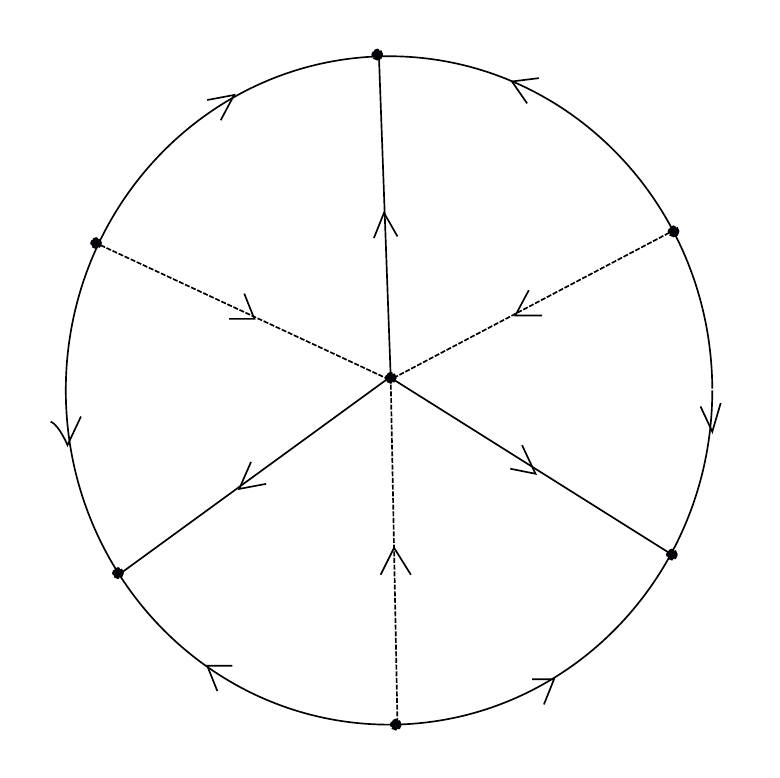}
\begin{picture}(0,0)
\put(-219,154){$N_1$}
\put(-34,166){$N_2$}
\put(-120,4){$N_3$}
\put(-219,52){$P_1$}
\put(-118,216){$P_2$}
\end{picture}
\end{center}
\vspace{-0.5cm}
\caption{{\small The dynamics of $\gamma^i \circ \mu$ in $L$. By Lemma \ref{l.unbounded} the dynamics of $\gamma^i \circ \ft^k$ is very similar near the boundary.}\label{f.prong}}
\end{figure}

In order to do prove that $p = 2$, we obtain
several facts, whose proof is done in  detail in 
\cite[Theorem 12.1]{BFFP}: 
\begin{itemize}
\item
First, if $x$ is a fixed
point of $\gamma^i \circ \ft^k$ in $L$ and $s(x)$ is a stable ray ending
in $x$, then $s(x)$ can only limit in a single point $w$ in $N$.
\item
An unstable ray through $x$ can only limit in a single point $w$ in $P$.
\item
Since $P, N$ are disjoint the ideal points of periodic stable
rays and periodic unstable rays are disjoint.
\item
Hence the two rays of $s(x)$ limit to distinct points, same for
unstable leaves.
\item
Because the total index is $1-p$, there are exactly $p - 1$
fixed points in $L$.  
\end{itemize}

If $x_1, x_2$ are distinct fixed points of $\gamma^i \circ \ft^k$, then
no ray of $s(x_1)$ can share an ideal point with a ray of $s(x_2)$. This is because if two periodic rays share an ideal point it is possible to either construct a periodic unstable leaf limiting in the repelling point in $N$ or get a configuration as in figure \ref{f.mixed} which gives a contradiction as one is able to find a tangency between stables and unstables. 

\begin{figure}[ht]
\begin{center}
\includegraphics[scale=0.60]{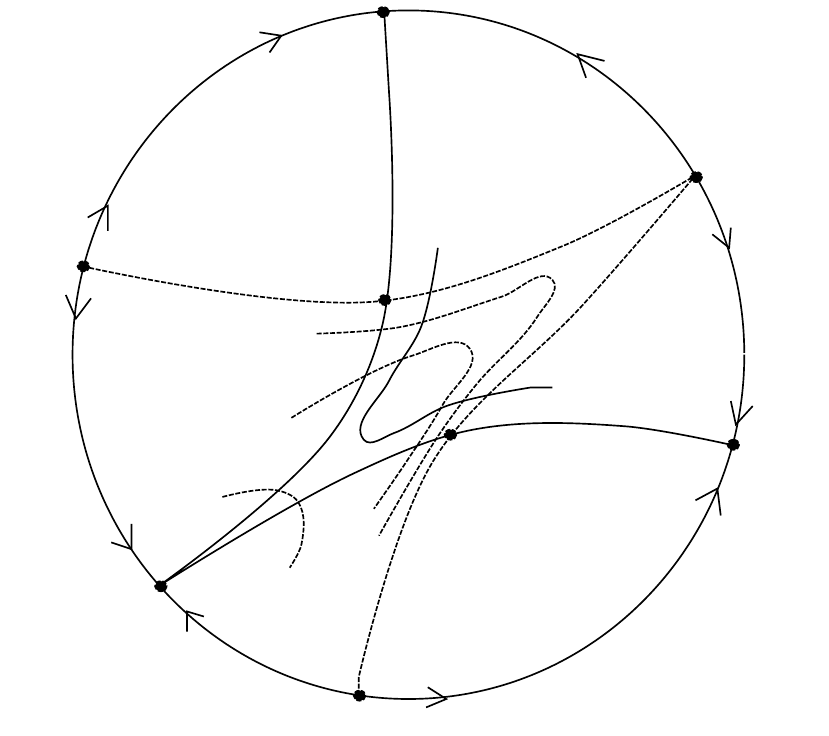}
\begin{picture}(0,0)
\end{picture}
\end{center}
\vspace{-0.5cm}
\caption{{\small If two stable rays limit in the same point either the unstables do not cross from one side to the other and one obtains a periodic unstable limiting in a repelling point or one gets the configuration in the drawing that forces some tangency.}\label{f.mixed}}
\end{figure}

Therefore one gets for each fixed point $x$ in $L$, there are
four fixed points in $\partial L$. 
These collections of $4$ points are pairwise disjoint.
Hence the total is
$4(p-1)$. Since this number is equal to $2p$, it follows that $p = 2(p-1)$
or that $p = 2$.

This finishes the proof of the proposition.
\end{proof}

We mention that there is a simpler proof this result assuming
a result of Mendes \cite{Mendes}.

\subsection{End of the proof of Proposition \ref{p.transl2}}\label{endpf}  
In this section we will assume that $M$ is a hyperbolic 3-manifold.
\vskip .07in

Suppose first that $\Phi_{cs}$ does not have singular orbits.
Then $\Phi_{cs}$ is a topological Anosov flow.
In that case, Theorem \ref{teo.AnosovFlows} implies
that $\pi_1(M)$ is solvable. But this contradicts
that $M$ is hyperbolic.

Suppose now that the 
flow $\Phi_{cs}$ has singular orbits. Let $\alpha$ be a $p$-prong
singular orbit, with $p \geq 3$.
Let $\gamma$ be the deck transformation associated with $\alpha$.
By Proposition \ref{fixed} there is $x$ in $\mt$ with
$\gamma^i \circ \ft^k(x) = x$, and $k > 0$.
In the proof of 
Proposition \ref{conjugate2} we showed that one may assume that
$x$ is in $\widetilde \Lambda^{su}$. 
Let $L$ be the leaf  of $\widetilde \Lambda^{su}$ containing $x$.
By Proposition \ref{nonsing}, there is a periodic orbit
of $\Phi_{su}$ associated with $\gamma$ and it is non singular.
But Proposition \ref{conjugate2} shows that the flows
$\Phi_{cs}$ and $\Phi_{su}$ are topologically equivalent, by
an equivalence homotopic to the identity. This would imply
that the orbit of $\Phi_{su}$ is singular. 
This is a contradiction with Proposition \ref{nonsing} which states
that the orbit of $\Phi_{su}$ in question is a regular 
orbit.

This contradiction shows that the assumption that there is a lamination
$\Lambda^{su}$ is impossible.
It follows that $f$ is accessible. 
This finishes the proof of Proposition \ref{p.transl2} and therefore of Theorem B.

\section{Accessibility without conservative behavior}\label{s.nonconservative}

In this section we obtain a result in the direction of having something like Theorem \ref{t.HHU} without assuming that $f$ is non-wandering. As we explained, the only place in the proofs of accessibility that uses that $f$ is non-wandering is to be able to use the second and third 
conclusions Theorem \ref{t.HHU}. One other context where accesssibility has been established without use of conservativity is \cite[Section 6.3]{HP-Nil} for partially hyperbolic diffeomorphisms in nilmanifolds which are not
the $3$-torus. (In the generic setting, the non-conservative case has also been considered in some references, e.g. \cite{DW,BHHTU}.)

Here we show the following result which directly implies Theorem C. 

\begin{theorem} Let $f: M \to M$ be a partially hyperbolic diffeomorphism of a closed 3-manifold $M$ whose fundamental group is not virtually solvable. Suppose that $f$ is a discretized
Anosov flow. Then $f$ is accessible.
\end{theorem}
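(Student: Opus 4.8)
The plan is to argue by contradiction: suppose $f$ is a discretized Anosov flow on a closed $3$-manifold $M$ with $\pi_1(M)$ not virtually solvable, but that $f$ is \emph{not} accessible. Since $f$ is a discretized Anosov flow it is in particular dynamically coherent, and its center foliation $\cF^c$ consists of the orbits of a topological Anosov flow $\Phi$; moreover a discretized Anosov flow is always non-wandering (indeed chain recurrent), so Theorem \ref{t.HHU} applies. This produces an $f$-invariant lamination $\Lambda^{su}$ tangent to $E^s\oplus E^u$ with no compact leaves (there are no tori tangent to $E^s\oplus E^u$ here because $\pi_1(M)$ is not virtually solvable, cf.\ Remark \ref{rem.toriimplysolv}), whose complementary regions have $I$-bundle completions and can be extended to a foliation $\cF$; in these complementary regions the center foliation is by compact segments joining the two boundary leaves. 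Since a discretized Anosov flow over $M$ with $\pi_1(M)$ not virtually solvable satisfies the hypotheses of Mostow-type reduction only when $M$ is hyperbolic, I would \emph{not} pass to homotopy-to-identity directly; instead the key point is that a discretized Anosov flow already comes with a good lift: one can take $\ft$ to be the time-$\tau$ lift of $\widetilde\Phi$, which is at bounded distance from the identity (because $\tau$ is bounded and orbits of a pseudo-Anosov flow regulating/transverse structure give bounded displacement along a compact manifold) and commutes with deck transformations. So $\ft$ is a good lift of (an iterate of) $f$, and $\pi_1(M)$ is not virtually solvable, which is exactly the standing hypothesis under which Propositions \ref{p.fixedleaf}, \ref{p.transl1} were proved.

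Next I would apply Corollary \ref{lami} to $\widetilde\Lambda^{su}$, splitting into the two cases of the excerpt. In Case I (some leaf of $\widetilde\Lambda^{su}$ fixed by $\ft$), Proposition \ref{p.fixedleaf} applies verbatim, since its only hypotheses are that $f$ is partially hyperbolic homotopic to the identity with $\pi_1(M)$ not virtually solvable; one does need the Gromov hyperbolicity of leaves of $\Lambda^{su}$, which is Corollary \ref{cor.gromov} and requires only non-wandering plus $\pi_1(M)$ not virtually solvable — both hold. Hence Case I is impossible. In Case II, $\ft$ acts as a translation on $\widetilde\Lambda^{su}$, so $\cF$ is $\RR$-covered and uniform and $\widetilde h$ translates the leaf space of $\widetilde\cF$. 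But $f$ is assumed to be a discretized Anosov flow, so Proposition \ref{p.transl1} (whose hypothesis is precisely that $\pi_1(M)$ is not virtually solvable, $\cF$ $\RR$-covered uniform, $\widetilde h$ a translation) gives a contradiction. Therefore Case II is also impossible, so $\Lambda^{su}$ cannot exist and $f$ is accessible.

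The subtle point — and the step I expect to be the main obstacle — is justifying carefully that a discretized Anosov flow admits a good lift and that the constructions of Sections \ref{s.FL} and \ref{ss.DAF} really go through without the hyperbolicity of $M$. The good-lift claim needs the observation that for a discretized Anosov flow $g(x)=\Phi_{\tau(x)}(x)$, the lift $\widetilde g(x)=\widetilde\Phi_{\widetilde\tau(x)}(x)$ moves each point a distance bounded by $\sup\tau$ times the (bounded) speed of $\Phi$ in the universal cover along a compact fundamental domain — this is standard, cf.\ \cite[Appendix G]{BFFP}. One also should check that $\widetilde g$ commutes with deck transformations, which holds because $\tau$ is a function on $M$ hence $\widetilde\tau$ is $\pi_1(M)$-invariant. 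Once the good lift is in hand, every ingredient invoked above (Theorem \ref{t.HHU}, Corollary \ref{lami}, Corollary \ref{cor.gromov}, Proposition \ref{p.fixedleaf}, Proposition \ref{p.transl1}) was explicitly proved under the hypotheses ``$f$ admits a good lift and $\pi_1(M)$ is not virtually solvable'' rather than ``$M$ hyperbolic'', so no further work is needed. This establishes the theorem, and Theorem C follows since a discretized Anosov flow over a manifold without virtually solvable fundamental group is precisely the hypothesis there.
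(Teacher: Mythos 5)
Your proof has a fatal gap right at the outset: the assertion that ``a discretized Anosov flow is always non-wandering (indeed chain recurrent), so Theorem~\ref{t.HHU} applies'' is false, and this false assertion is carrying essentially all of the weight in your argument. Bonatti--Guelman \cite{BG} construct partially hyperbolic diffeomorphisms that are discretized Anosov flows but admit proper attractors, hence have a non-empty wandering set; this is precisely the example the paper cites at the end of Section~\ref{s.nonconservative} to explain why the theorem is \emph{strictly stronger} than its non-wandering version. Concretely, the function $\tau$ in $f(x)=\Phi_{\tau(x)}(x)$ is not constant, and non-constant reparametrizations of even a transitive Anosov flow can create genuine wandering behaviour. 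Once you drop the non-wandering hypothesis, the second and third conclusions of Theorem~\ref{t.HHU} (the $I$-bundle structure of complementary regions, and the product center foliation there) are exactly the ones whose proofs require it, so you cannot invoke them. The same objection hits your appeal to Corollary~\ref{cor.gromov}, since that goes through Lemma~\ref{minimal}, which again assumes the non-wandering set is all of $M$.

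The content of the theorem, and what the paper actually proves, is how to \emph{recover} the $I$-bundle conclusions of Theorem~\ref{t.HHU} from the discretized-Anosov-flow structure alone, without recurrence. The argument is: take the good lift $\ft$ that preserves the lifted flow lines of $\widetilde\Phi$ (your construction of this lift, via $\tau>0$ bounded and $\pi_1(M)$-invariance of $\widetilde\tau$, is fine and matches the paper). Then one shows that each strong stable leaf in a center-stable leaf of $\widetilde\Phi$ is a global section of the center/flow direction there, and symmetrically for unstables; consequently any leaf $E$ of $\widetilde\Lambda^{su}$, being saturated by strong stables and strong unstables and transverse to the flow, projects injectively and onto the orbit space $\mathcal O\cong\RR^2$ of $\widetilde\Phi$. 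That is, $E$ is a global section for $\widetilde\Phi$. From this the $I$-bundle structure of the complementary regions of $\widetilde\Lambda^{su}$ (and the product center foliation there) follows directly, and only then can one run Cases~I and~II. If you want to retain your outline, the piece you need to supply is exactly this ``global section'' argument replacing the appeal to recurrence; as written, your proof establishes nothing beyond what was already known for non-wandering maps.
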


We note that in \cite{BPW} stable ergodicity and accessibility of perturbations of certain time one maps of Anosov flows is considered, though in that paper specific properties of the strong foliations of Anosov flows are used crucially. 

\begin{proof} 
We first remark that  \cite[Proposition G.2]{BFFP2}
shows that $f$ is dynamically coherent.

Suppose that $f$ is not accessible.
Then there is a lamination $\Lambda^{su}$ given by the
first conclusion of Theorem \ref{t.HHU}. That only uses that
$f$ is not accessible. Notice that in this case we cannot say anything a priori about the complementary regions of $\Lambda^{su}$
 since $f$ is not necessarily non-wandering. The goal will be to show that 
the completions of these complementary regions are $I$-bundles in order to be able to apply the results of the previous sections. 

Let $\Phi$ be a topological Anosov flow so that $f(x) =
\Phi_{\tau(x)} (x)$ where $\tau(x) > 0$. Let $\ft$ be the lift
preserving flow lines of $\widetilde \Phi$, so $\ft(x) = \widetilde \Phi_{\tau(\pi(x))} (x)$.

Consider a stable leaf $s$ of $f$ lifted to $\mt$. Then all points in $s$
converge together under positive iteration of $\ft$.
It follows that $s$ is contained in a (weak) stable leaf $L$ of $\widetilde \Phi$. 

Any center leaf $c$ in $L$ is an orbit of $\widetilde \Phi$.
We claim that that $s$ intersects $c$. To see this, take a point $x\in s$ and call its center leaf $c_x$. It follows that there exists $n>0$ so that $\ft^n(x)$ is at distance less than $\eps$ from $c$, because they
are in the same weak stable leaf of $\widetilde \Phi$.
By the local product structure, it follows that the stable
leaf $s(\ft^n(x))$ intersects $c$.
Since $\ft^{-n}(s(\ft^n(x)))=s$ and $c$ is $\ft$-invariant it follows that $s$ intersects $c$. Therefore, stable leaves are global sections of the center leaves inside each center stable leaf. These facts are already implicit in \cite{BW} (see in particular \cite[Lemma 3.16]{BW}). Likewise for the center unstable leaves. 

We now consider the orbit space $\mathcal O$ of the lifted
flow $\widetilde \Phi$. This orbit space is homeomorphic to $\RR^2$
(see \cite{FenleyAnosov,BarbotFeuill}).
Take any leaf $E$ of $\widetilde \Lambda^{su}$. Then $E$ is transverse
to $\widetilde \Phi$, as it is transverse to the center bundle.
Hence the projection of $E$ to $\mathcal O$ is locally
injective.
The previous paragraph implies that $E$ projects to $\mathcal O$ as
a stable and unstable saturated set. This implies that
$E$ is a global section of the projection $\mt \rightarrow 
\mathcal O$, in other words a global section for the flow 
$\widetilde \Phi$.

It now follows that the completion of 
any complementary region of $\widetilde \Lambda^{su}$
is an $I$-bundle. In addition the center/flow foliation is a product in
the completion of this complementary region. Therefore the
same is true when projecting to $M$.

This recovers the second conclusion
of Theorem \ref{t.HHU}. 
We do not quite get the third conclusion of Theorem \ref{t.HHU},
that is, the unique integrability of the center bundle
in completions of complementary regions of $\Lambda^{su}$. However,
since $f$ is dynamically coherent, there is a center foliation
and the center leaves form an $I$-bundle structure
in any such completion, as proved above. 
One can now proceed as in sections \ref{s.FL} and \ref{ss.DAF} to get a contradiction and show that $f$ is accessible.
\end{proof} 

Notice that this result is strictly larger than the previous as it has been shown in \cite{BG} that there are examples of partially hyperbolic diffeomorphisms which are discretized Anosov flows and which have proper attractors
and therefore cannot be non-wandering. 

\section{Other isotopy classes on Seifert manifolds}\label{s.pASeif}

In this section we let $M$ be a Seifert 3-manifold with hyperbolic base. Since we are interested in accessibility and the hypothesis is that $f$ is non-wandering, we have already explained that there is no loss of generality on working in a finite cover and taking a finite iterate (cf. Lemma \ref{fmin2}). 

Since every Seifert manifold with hyperbolic base has a finite cover that is an orientable circle bundle over an orientable surface of genus $\geq 2$, we will assume all along that $M$ is a circle bundle over such a surface. We denote by $p: M \to \Sigma$ the circle bundle projection. Every diffeomorphism $f: M \to M$ induces via $p$ an isotopy class of diffeomorphisms of $\Sigma$ because $f$ has to preserve the Seifert fibration up to isotopy.
We say that $f$ induces an element $\rho(f)$ on the mapping class group of $\Sigma$. 

The following result implies Theorem D and is what we will prove in this section:

\begin{theorem}\label{t.SeifPA}
Let $M$ be a circle bundle over a surface and $f: M\to M$ be a partially hyperbolic diffeomorphism whose non-wandering set is $M$ and such that $\rho(f)$ has a pseudo-Anosov component with a $p$-prong with $p\geq 3$. Then, $f$ is accessible. 
\end{theorem}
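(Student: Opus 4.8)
The plan is to reduce Theorem \ref{t.SeifPA} to the machinery already developed for Theorem B, by exhibiting the same foliation-theoretic dichotomy and then ruling out both horns. As before I would argue by contradiction: assume $f$ is not accessible. Since $M$ is a circle bundle over a hyperbolic surface $\Sigma$, its fundamental group is not virtually solvable, so Remark \ref{rem.toriimplysolv} applies and there are no tori tangent to $E^s\oplus E^u$. Thus Theorem \ref{t.HHU} provides an $f$-invariant lamination $\Lambda^{su}$ with $C^1$ leaves tangent to $E^s\oplus E^u$, with no compact leaves, whose complementary regions have $I$-bundle completions, and which completes (via Lemma \ref{l.homot}) to an $h$-invariant foliation $\cF$ for some $h$ homotopic to $f$ with $h=f$ on $\Lambda^{su}$. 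Pick a good lift; note that on a Seifert manifold every homeomorphism has an iterate isotopic to a fiber-preserving map, and the relevant lift (of an iterate) is a good lift because $\widetilde M$ is the universal cover of a circle bundle, where the fiber direction is a bounded factor. Then Corollary \ref{lami} splits the analysis into Case I ($\ft$ fixes a leaf of $\widetilde\Lambda^{su}$) and Case II ($\ft$ acts as a translation on $\widetilde\Lambda^{su}$, so $\cF$ is $\R$-covered and uniform).

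Case I is handled verbatim by Proposition \ref{p.fixedleaf}, which only needs $\pi_1(M)$ non virtually solvable and which in turn relies on Corollary \ref{cor.gromov} for Gromov hyperbolicity of the leaves — and here Lemma \ref{minimal} plus Proposition \ref{grhyp} apply since $M$ is irreducible (a circle bundle over a surface of genus $\ge 2$ is irreducible) with $\pi_1$ not virtually solvable. So Case I is impossible exactly as in Section \ref{s.FL}. For Case II, I want to again produce the three-foliation picture. Up to a finite cover and iterate, $f$ preserves branching foliations $\fbs,\fbu$; by the Seifert analogue of Theorem \ref{teo.clasif} (the results of \cite{BFFP} quoted in the footnote there hold for partially hyperbolic diffeomorphisms homotopic to the identity, and on Seifert manifolds the classification from \cite{BFFP} identifies the possibilities), either $f$ is a discretized Anosov flow or $\ft$ is a double translation on $\wfbs,\wfbu$. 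The first possibility is excluded by Proposition \ref{p.transl1}: if $f$ were a discretized Anosov flow transverse to the uniform $\R$-covered $\cF$, the slithering argument of subsection \ref{ss.DAF} gives a contradiction, and that argument only uses $\pi_1(M)$ not virtually solvable. So we are in the triple translation situation: $\ft$ is a translation on $\widetilde\Lambda^{su}$, on $\wfbs$, and on $\wfbu$.

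Now the key new input replacing "hyperbolicity of $M$" is the hypothesis that $\rho(f)$ has a pseudo-Anosov component with a $p$-prong, $p\ge 3$. The point is that on a Seifert manifold with hyperbolic base, an $\R$-covered uniform foliation transverse to the fibers still admits a regulating (pseudo-)Anosov flow, but more relevantly the structure of such translations is governed by the action $\rho(f)$ downstairs: a double/triple translation forces the center-stable and center-unstable branching foliations to be of "slithering" type whose transverse flow is, after passing to the base, essentially the suspension of the pseudo-Anosov component of $\rho(f)$. Concretely I would: (i) use the regulating pseudo-Anosov flows $\Phi_{cs},\Phi_{cu},\Phi_{su}$ transverse to $\fbs,\fbu,\cF$ respectively — these exist in the Seifert case too by the version of Theorem \ref{teo.regul} for Seifert manifolds (Thurston/Calegari), noting that an $\R$-covered uniform foliation transverse to the Seifert fibration has a transverse pseudo-Anosov flow whose orbit space carries the natural pseudo-Anosov action of $\rho(f)$ on $\Sigma$; (ii) run the index arguments of subsection \ref{ss.Triple} — Proposition \ref{fixed}, Proposition \ref{conjugate2}, Proposition \ref{nonsing} — which are geometric and do not use hyperbolicity of $M$, only that the transverse flows are regulating pseudo-Anosov flows and that leaves of $\widetilde\Lambda^{su}$ are Gromov hyperbolic (Corollary \ref{cor.gromov}); this yields that $\Phi_{cs}$ is orbit equivalent to $\Phi_{su}$ and that any periodic orbit of $\Phi_{su}$ coming from a singular orbit of $\Phi_{cs}$ must in fact be regular with index $-1$; (iii) finally, since $\rho(f)$ genuinely has a $p$-prong with $p\ge 3$, the flow $\Phi_{cs}$ (whose periodic orbit data reflects $\rho(f)$) must itself have a singular $p$-prong orbit, contradicting the previous step exactly as in subsection \ref{endpf}.

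The main obstacle I anticipate is step (i)/(ii) in the Seifert setting: one must be sure that Theorem \ref{teo.regul} and the orbit-space machinery of Section \ref{ss.Triple} transfer from hyperbolic $3$-manifolds to circle bundles over hyperbolic surfaces. The quasigeodesic behavior of the regulating flow (used in Proposition \ref{conjugate}) relied on $M$ being hyperbolic; in the Seifert case one instead uses that the orbit space of the lifted flow, modulo the fiber $\R$-action, is the orbit space of the pseudo-Anosov homeomorphism of $\Sigma$ (a plane), and that free-homotopy classes of periodic orbits are controlled by conjugacy classes in $\pi_1(\Sigma)$ together with the (bounded) fiber translation. So Proposition \ref{conjugate} should be replaced by a direct comparison of the two transverse flows through their images in this common "base orbit space", matching periodic orbits via their $\rho(f)$-periodic points in $\Sigma$. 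Making this comparison precise — in particular checking that the singularity data ($p$-prongs) of $\Phi_{cs}$ and $\Phi_{su}$ are both faithfully read off from the $p$-prongs of $\rho(f)$, so that a genuine $p\ge 3$ prong cannot be "dissolved" — is the heart of the argument and is where I would concentrate the work; the remaining pieces (Cases I and the discretized Anosov flow subcase) are immediate from the already-proven propositions. A stronger statement, with the precise hypotheses on $\rho(f)$ and the relation to the examples of \cite{BGHP}, would be stated and proved along these lines.
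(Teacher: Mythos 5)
Your proposal is built on a foundation that cannot support it: you set up a good lift $\ft$ of (an iterate of) $f$ and run the Case~I/Case~II dichotomy of Corollary~\ref{lami}, but a good lift exists only when $f$ is homotopic to the identity, and under the hypothesis of Theorem~\ref{t.SeifPA} it never is. If $\rho(f)$ has a pseudo-Anosov component, then $\rho(f)$ has infinite order in the mapping class group of $\Sigma$, so no iterate of $f$ acts trivially on $\pi_1(M)$, hence no iterate is homotopic to the identity, and there is no lift of any $f^k$ to $\mt$ that is both bounded distance from the identity and commutes with all deck transformations. Your parenthetical justification (``the relevant lift of an iterate is a good lift because the fiber direction is a bounded factor'') misidentifies where the unboundedness lives: the problem is the base direction, where any lift of the pseudo-Anosov piece to $\widetilde\Sigma = \HH^2$ moves points arbitrarily far. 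Consequently Corollary~\ref{lami}, Proposition~\ref{p.fixedleaf}, Proposition~\ref{p.transl1}, and the ``triple translation'' picture of subsection~\ref{ss.Triple} are all unavailable, and the structure of your argument collapses at the outset.

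The paper's route is genuinely different. It never invokes a good lift. Instead it first shows that the lamination $\Lambda^{su}$ and its extension $\cF$ must be \emph{horizontal} (if $\cF$ had a vertical sublamination the boundary leaves would be periodic cylinders, and a separate argument --- adapted from \cite[Section 5.4]{HaPS}, using bounded geometry of the cylinder rather than Gromov hyperbolicity --- rules this out); then, using non-wandering and \cite[Proposition 5.1]{HaPS}, it shows $\fbs$ and $\fbu$ are also horizontal. With horizontality in hand it passes to the intermediate cover $\hat M=\mt/\langle c\rangle\cong\widetilde\Sigma\times S^1$ and chooses a lift $\hat f$ \emph{associated to a $p$-prong of $\rho(f)$}; this lift has a prescribed north-south-type action on $\partial\widetilde\Sigma$ with at least $2p\ge 6$ alternating fixed points, exactly the dynamics one would otherwise have to manufacture through a regulating pseudo-Anosov flow. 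After producing (via \cite{BFFP3}) an $\hat f^k$-invariant leaf of $\cW^{cs}_{bran}$, the index count of Proposition~\ref{nonsing} is run on the associated fixed leaf of $\Lambda^{su}$ and gives the contradiction $p=2$. Your final paragraph correctly intuits that the periodic-orbit/index comparison should be ``read off from $\rho(f)$ in the base'' rather than from quasigeodesic behavior in $\HH^3$, which is the right idea --- but without the horizontality step and without replacing the good lift by the $p$-prong lift $\hat f$, the earlier parts of your argument have no content here. The vertical case is a second concrete gap: you do not address it at all, and it cannot be handled by citing Proposition~\ref{p.fixedleaf}, since that proposition needs both a good lift and Gromov hyperbolic leaves, neither of which is available for vertical cylinder leaves in this isotopy class.
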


Given a mapping class $\rho$ of $\Sigma$, there is a canonical decomposition of $\Sigma$ into subsurfaces with boundary on which, up to an iterate, $\rho$ acts either as the identity or leaves no simple closed curve not homotopic to the boundary invariant. The latter pieces are called \emph{pseudo-Anosov}.  We assume the reader has some familiarity with the classification of surface homeomorphisms (see e.g. \cite{HandelThurston}). 

Notice that if $\rho(f)$ is pseudo-Anosov, then it must have at least one $p$-prong with $p\geq 3$ because $\Sigma$ has genus $\geq 2$.

The proof of Theorem \ref{t.SeifPA} 
is very similar to the triple translation case in hyperbolic 3-manifolds.

\begin{proof}
Assume by contradiction that $f$ is not accessible. Then, by Theorem \ref{t.HHU} we get a $f$-invariant non-empty lamination $\Lambda^{su}$ tangent to $E^s \oplus E^u$ which can be extended to a foliation $\cF$. 

We claim that this foliation is horizontal in the sense of \cite{Brit}. Indeed, if it were not horizontal, since the completions
of the complementary regions of $\Lambda^{su}$ are $I$-bundles, then $\Lambda^{su}$ would contain a vertical sublamination. 
Since the base surface is hyperbolic, it follows that
the sublamination is not all of $M$. Hence the boundary leaves for such laminations would be periodic by $f$. Notice that such a vertical leaf is not Gromov hyperbolic, so the proof of Proposition \ref{p.fixedleaf} cannot be applied directly.  Nevertheless, the argument can be carried without problem in this setting too since the leaves are cylinders with
bounded geometry (bounded injectivity radius). Notice that a strong stable cannot intersect the same strong unstable twice due to the graph transform argument alluded to in the proof of Proposition \ref{p.fixedleaf}. This implies that the cylinder has both strong stable and strong unstable leaves going from end to end of the cylinder and that allows to 
apply the argument of \cite[Section 5.4]{HaPS} to get a contradiction. This shows that $\cF$ must be horizontal.  

Since $f$ is non-wandering, using \cite[Proposition 5.1]{HaPS} we know that the branching foliations $\cW^{cs}_{bran}$ and $\cW^{cu}_{bran}$ are horizontal too. 

Consider $\hat M$ to be the cover of $M$ obtained as $\widetilde M/_{<c>}$ where $\widetilde{M}$ is the universal cover of $M$ and $c \in \pi_1(M)$ is associated to a circle fiber. It follows that $\hat M$ is homeomorphic to $\widetilde \Sigma \times S^1$. 

Pick a lift $\hat f$ of $f$ to $\hat M$ associated to the $p$-prong of the induced action $\rho(f)$ in the universal cover $\widetilde \Sigma =\mathbb{H}^2$ of $\Sigma$. This lift has an even number of fixed points at infinity (in $\partial \widetilde \Sigma \sim S^1$) which are alternatingly attracting and repelling in the boundary. The number of fixed points is $\geq 6$ since the $p$-prong is with $p \geq 3$. 

We first need to show that $\hat f$ fixes some leaf of $\cW^{cs}_{bran}$ up to taking an iterate. This can be proven following the same scheme as for \cite[Proposition 8.1]{BFFP}  and it is done in \cite{BFFP3}. 

Let $L = \hat f^k (L)$ with $L \in \cW^{cs}_{bran}$. By an index argument, one can see that $\hat f$ has a fixed point in $L$. As in the proof of Proposition \ref{conjugate2} we get a leaf $F$ of $\Lambda^{su}$ which is fixed. Now, exactly the same argument of Proposition \ref{nonsing} works to give a contradiction (notice that in this case the proof is much simpler since we do not need to compare the large scale action of $\hat f$ on leaves aof $\cW^{cs}_{bran}$ and $\Lambda^{cs}$ because being both horizontal, they mimic the action of the chosen lift in the base). 

\end{proof}

\medskip
\medskip

{\em Acknowledgements:} We are grateful to Jana Rodriguez Hertz and Raul Ures who explained to us the ergodicity
problem. We thank Andy Hammerlindl for several comments (including the proof of Lemma \ref{double}) and explaining \cite{HRHU}. 
Several comments by the referee were important to improve the presentation of the paper.  
SF was partially supported by 
grant award number $280429$ of the Simons foundation.
RP was partially supported by CSIC no.618, FVF-2017-111 and
FCE-1-2017-1-135352 and the work was completed when RP was serving as a Von Neumann Fellow at the Institute for Advanced
Study (IAS), he is grateful to the Minerva Research Foundation and NSF (DMS-1638352) for the support and to IAS for the amazing work environment.


\begin{thebibliography}{2}

\bibitem[An]{Anosov} D. Anosov, Geodesic flows on closed Riemannian manifolds with negative curvature, \emph{Proc. Steklov Inst. Mathematics.} {\bf 90}  (1967).. Translated from the Russian by S. Feder American Mathematical Society, Providence, R.I. 1969 iv+235 pp.

\bibitem[As]{As} M. Asaoka, On invariant volumes of codimension-one
Anosov flows and the Verjovsky conjecture, 
\emph{Inven. Math.} {\bf 174} (2008) 435-462.

\bibitem[AVW]{AVW} A. Avila, M. Viana, A. Wilkinson, Absolute continuity, Lyapunov exponents and rigidity I: geodesic flows. \emph{J. Eur. Math. Soc. (JEMS)} {\bf 17} (2015), no. 6, 1435--1462. 

\bibitem[Ba$_1$]{BarbotFeuill} T. Barbot, Caractérisation des flots d'Anosov en dimension 3 par leurs feuilletages faibles. \emph{Ergodic Theory Dynam. Systems} {\bf 15} (1995), no. 2, 247--270.
\bibitem[Ba$_2$]{Barbot1996} T. Barbot, Flots d'Anosov sur les vari\'{e}t\'{e}s graph\'{e}es au sens de Waldhausen, 	\emph{Ann. Inst. Fourier Grenoble} {\bf 46} (1996) 1451-1517.

\bibitem[BFe]{BarbotFenley21} T. Barbot, S. Fenley, Free Seifert pieces for pseudo-Anosov flows, \emph{Geom. Topol.} {\bf 25} (2021) 1331--1440
%
%
\bibitem[BaFe]{BarthFenley} T. Barthelm\'e, S. Fenley,  Counting orbits of Anosov flows in free homotopy classes, \emph{Commentarii Mathematici Helvetici} {\bf 92} (2017), no. 4, 641--714.

%

\bibitem[BFFP]{BFFP} T. Barthelm\'e, S. Fenley, S. Frankel, R. Potrie, Partially hyperbolic diffeomorphisms homotopic to identity in dimension 3, Part
I: The dynamically coherent case, to appear in  \emph{Ann. Sci. \'Ec. Norm. Sup\'er.}

\bibitem[BFFP$_2$]{BFFP2} T. Barthelm\'e, S. Fenley, S. Frankel, R. Potrie, Partially hyperbolic diffeomorphisms homotopic to identity 
in dimension 3, Part II: Branching foliations, arXiv:2008.04871v1.

\bibitem[BFFP$_3$]{BFFPann} T. Barthelm\'e, S. Fenley, S. Frankel, R. Potrie, Research Announcement: Partially hyperbolic diffeomorphisms homotopic to identity in dimension 3, 
\emph{Wood, David R. (ed.) et al., 2018 MATRIX annals. Cham: Springer. MATRIX Book Ser.} {\bf 3}, 341-357 (2020).

\bibitem[BFFP$_4$]{BFFP3} T. Barthelm\'e, S. Fenley, S. Frankel, R. Potrie, Dynamical incoherence for a large class of partially hyperbolic diffeomorphisms, \emph{Ergodic Theory Dynam. Systems} {\bf 41} (2021), no. 11, 3227-3243.
%
\bibitem[BDV]{BDV} C. Bonatti, L.J.D\'{i}az, and M. Viana, {\emph 
Dynamics beyond uniform hyperbolicity}, Encyclopedia of Mathematical
Sciences, vol.102,  Springer-Verlag, Berlin, 2005, A global
geometric and probabilistic perspective, Mathematical Physics, III.
MR 2105774.
%
%
\bibitem[BGHP]{BGHP} C. Bonatti, A. Gogolev, A. Hammerlindl, R. Potrie, Anomalous partially hyperbolic diffeomorphisms III: abundance and incoherence,  \emph{Geom. Topol.} {\bf 24} (2020) 1751--1790.

\bibitem[BG]{BG} C. Bonatti, N. Guelman, Axiom A diffeomorphisms derived from Anosov flows. \emph{J. Mod. Dyn.} {\bf 4} (2010), no. 1, 1--63. 

%
\bibitem[BoW]{BW} C. Bonatti and A. Wilkinson, Transitive partially hyperbolic diffeomorphisms on $3$-manifolds, \emph{Topology} {\bf 44} (2005) (2005), no. 3, 475--508.
%
%
\bibitem[Br]{Brin} M. Brin, Topological transitivity of one class of dynamic systems and flows of frames on manifolds of negative curvature, \emph{ Funct. Anal. Appl.} {\bf 9} (1975), 8--16.

%
\bibitem[Brit]{Brit} M. Brittenham,  Essential laminations in Seifert fibered spaces, \emph{ Topology} {\bf 32} (1993) 61--85.

\bibitem[Brit$_2$]{Brit2} M. Brittenham, Tautly foliated 3-manifolds with no $\RR$-covered foliations. Foliations: geometry and dynamics (Warsaw, 2000), 213-224, World Sci. Publ., River Edge, NJ, 2002. 
%
%
\bibitem[BI]{BI} D. Burago, S. Ivanov, Partially hyperbolic diffeomorphisms of 3-manifolds with abelian fundamental groups. \emph{J. Mod. Dyn.} {\bf 2} (2008), no. 4, 541--580.

\bibitem[BHHTU]{BHHTU}  K. Burns, F. Rodriguez Hertz,M.A. Rodriguez Hertz, A. Talitskaya, R. Ures, Density of accessibility for partially hyperbolic diffeomorphisms with one-dimensional center. Discrete Contin. Dyn. Syst. 22 (2008), no. 1-2, 75--88. 

\bibitem[BPW]{BPW} K. Burns, C. Pugh, A. Wilkinson, Stable ergodicity and Anosov flows, \emph{Topology} {\bf 39} (2000), pp. 149-159.

%
\bibitem[BuW]{BurnsWilkinson} K. Burns, A. Wilkinson,  On the ergodicity of partially hyperbolic systems. \emph{Ann. of Math.} (2) {\bf 171} (2010), no. 1, 451--489.


\bibitem[Ca]{CalegariPA} D. Calegari, The geometry of ${\bf R}$-covered foliations. \emph{Geom. Topol.} {\bf 4} (2000), 457--515.
%
\bibitem[Ca$_2$]{CalegariSM} D. Calegari,  Leafwise smoothing laminations.
	\emph{Alg. Geom. Topol.} {\bf 1} (2001) 579-585.

\bibitem[Ca$_3$]{CalegariQG} D. Calegari,  Universal circles for
	quasigeodesic flows, \emph{Geom. Topol.} {\bf 10} (2006)
	2271--2298.

\bibitem[Ca$_4$]{Calegari} D. Calegari,  \emph{Foliations and the geometry of 3-manifolds.} Oxford Mathematical Monographs. Oxford University Press, Oxford, (2007). xiv+363 pp. ISBN: 978-0-19-857008-0
%
\bibitem[Can]{Candel} A. Candel,  Uniformization of surface laminations, {\em Ann. Sci. \'{E}cole Norm. Sup.} {\bf 26} (1993) 489--516.
%
\bibitem[CC]{CandelConlon} A. Candel, L. Conlon, \emph{Foliations I and II,} Graduate Studies in Mathematics, 60. American Mathematical Society, Providence, RI,  (2000) xiv+402 pp. ISBN: 0-8218-0809-5 and (2003) xiv+545 pp. ISBN: 0-8218-0881-8
%
\bibitem[CHHU]{CHHU} P. Carrasco, F. Rodriguez Hertz, J. Rodriguez Hertz, R. Ures, Partially hyperbolic dynamics in dimension 3, \emph{Ergodic Theory Dynam. Systems}  {\bf 38} (2018), no. 8, 2801--2837. 
%

\bibitem[CP]{CP} S. Crovisier, R. Potrie, Introduction to partially hyperbolic dynamics, Trieste Lecture Notes ICTP (2015). Available in the web pages of the authors. 

\bibitem[DW]{DW} D. Dolgopyat, A. Wilkinson, Stable accessibility is $C^1$-dense, \emph{Asterisque} {\bf 287} (2003), 33--60.

\bibitem[Fen$_1$]{FenleyAnosov} S. Fenley, Anosov flows in $3$-manifolds. \emph{Ann. of Math. (2)} {\bf 139} (1994), no. 1, 79--115.

%
\bibitem[Fen$_2$]{Fen1998} S. Fenley, The structure of branching in
	Anosov flows of $3$-manifolds, \emph{ Comm. Math. Helv.}
	{\bf 73} (1998) 259--297.

\bibitem[Fen$_3$]{Fen2002} S. Fenley, Foliations, topology and geometry 
of 3-manifolds: $\bold R$-covered foliations and transverse pseudo-Anosov 
flows. \emph{Comment. Math. Helv.} {\bf 77} (2002), no. 3, 415--490.

\bibitem[Fen$_4$]{Fen2003} S. Fenley, Pseudo-Anosov flows and incompressible
tori, \emph{Geom. Ded.} {\bf 99} (2003) 61--102.


\bibitem[Fen$_5$]{FenleyTAMS}  S. Fenley, Regulating flows, topology of foliations and rigidity.  \emph{Trans. Amer. Math. Soc.}  {\bf 357} (2005), 4957--5000.


\bibitem[Fen$_6$]{Fen2012} S. Fenley, Ideal boundaries of pseudo-Anosov flows
	and uniform convergence groups, with connections and applications
	to large scale geometry, \emph{Geom. Topol.} {\bf 17} (2012) 1--110.

\bibitem[Fen$_7$]{Fenley-CMH} S. Fenley, Rigidity of pseudo-Anosov flows transverse to $\RR$-covered foliations. 
\emph{Comment. Math. Helv.} {\bf 88} (2013), no. 3, 643--676. 

\bibitem[FM]{Fe-Mo} S. Fenley and L. Mosher; Quasigeodesic 
	flows in hyperbolic $3$-manifolds. \emph{Topology}
	{\bf 40} (2001) 503--537.
	
\bibitem[FP]{FP} S. Fenley, R. Potrie, Partially hyperbolic and pseudo-Anosov dynamics, \emph{Preprint} arXiv:2102.02156. 	

\bibitem[FoHa]{FoulonHasselblatt} P. Foulon, B. Hasselblatt, Contact Anosov flows on hyperbolic 3-manifolds, \emph{Geom. Topol.} {\bf 17} \  2 (2013), 1225-1252.

\bibitem[Fr]{Franks} J. Franks, \emph{ Homology and dynamical systems.} CBMS Regional Conference Series in Mathematics, 49. Published for the Conference Board of the Mathematical Sciences, Washington, D.C.; by the American Mathematical Society, Providence, R. I., 1982. {\rm viii}+120 pp. (loose errata). ISBN: 0-8218-1700-0


\bibitem[GS]{GanShi} S. Gan, Y. Shi, Rigidity of center Lyapunov exponents and $su$-integrability, \emph{Comment. Math. Helv.} {\bf 95} (2020)  569--592.

%
\bibitem[Gh]{Ghys} E. Ghys, Flots d'Anosov sur les $3$-vari\'et\'es fibr\'ees en cercles.\emph{Ergodic Theory Dynam. Systems} {\bf 4} (1984), no. 1, 67--80. 

\bibitem[Go]{Go} S. Goodman, Dehn surgery on Anosov flows,
Lec. Notes Math. {\bf 1007} (1983) 300-307, Springer.

\bibitem[GPS]{GPS} M. Grayson, C. Pugh, M. Shub, Stably ergodic diffeomorphisms. \emph{Ann. of Math.} (2) {\bf 140} (1994), no. 2, 295--329.

\bibitem[Hae]{Haefliger} A. Haefliger, Group\"{o}ides d'holonomie et
	classifiants, \emph{Asterisque} {\bf 116} (1984) 70-97.

\bibitem[H]{Hammerlindl} A. Hammerlindl, Ergodic components of partially hyperbolic systems. \emph{Comment. Math. Helv.}  {\bf 92} (2017), no. 1, 131--184.

\bibitem[HRHU]{HRHU} A. Hammerlindl, J. Rodriguez Hertz, R. Ures, Ergodicity and partial hyperbolicity on Seifert manifolds, \emph{J. Mod. Dyn}, {\bf 16} (2020) 331--348.

\bibitem[HP$_1$]{HP-Nil}   A. Hammerlindl, R. Potrie,  Pointwise partial hyperbolicity in three-dimensional nilmanifolds. \emph{J. Lond. Math. Soc. (2)} {\bf 89} (2014), no. 3, 853--875.
%
%
\bibitem[HP$_2$]{HP-survey} A. Hammerlindl, R. Potrie, Partial hyperbolicity and classification: a survey, \emph{Ergodic Theory Dynam. Systems} {\bf 38} (2018), no. 2, 401--443.  
%
\bibitem[HaPS]{HaPS} A. Hammerlindl, R. Potrie, M. Shannon, Seifert manifolds admitting partially hyperbolic diffeomorphisms, \emph{J. Mod. Dyn} {\bf 12} (2018) 193--222. 
%
\bibitem[HU]{HUres} A. Hammerlindl, R. Ures, Ergodicity and partial hyperbolicity on the 3-torus. \emph{Commun. Contemp. Math.} {\bf 16} (2014), no. 4, 1350038, 22pp.

\bibitem[HaTh]{HandelThurston} M. Handel, W. Thurston, New proofs of some results of Nielsen, \emph{Adv. in Math.}  {\bf 56} (1985), no. 2, 173-191. 

\bibitem[He]{He} J. Hempel, {\em $3$-manifolds}, Ann. of Math. Studies
	{\bf 86} Princeton Univ. Press, 1976.


\bibitem[HHU]{HHU-Ergo} F. Rodriguez Hertz, J. Rodriguez Hertz, R. Ures, Accessibility and stable ergodicity for partially hyperbolic diffeomorphisms with 1D-center bundle. \emph{Invent. Math.} {\bf 172} (2008), no. 2, 353--381.

\bibitem[HHU$_2$]{HHU-dim3} F. Rodriguez Hertz, J. Rodriguez Hertz, R. Ures, Partial hyperbolicity and ergodicity in dimension three. \emph{ J. Mod. Dyn.} {\bf 2} (2008), no. 2, 187-208.

\bibitem[HHU$_3$]{HHU-tori} F. Rodriguez Hertz, J. Rodriguez Hertz, R. Ures, Tori with hyperbolic dynamics in 3-manifolds.\emph{ J. Mod. Dyn.} {\bf 5} (2011), no. 1, 185--202.
%
\bibitem[HHU$_4$]{HHU-noncoherent} F. Rodriguez Hertz, J. Rodriguez Hertz, R. Ures, A non-dynamically coherent example on $T^3$. \emph{Ann. Inst. H. Poincaré Anal. Non Linéaire} {\bf 33} (2016), no. 4, 1023--1032. 


\bibitem[KH]{KH} A. Katok, B. Hasselblatt, \emph{Introduction to the modern theory of dynamical systems}, Cambridge University Press (1995). 
%

\bibitem[HPS]{HPS} M. Hirsch, C. Pugh, M. Shub, {Invariant manifolds},
	Lecture Notes Math. {\bf 583} Springer Verlag (1977).

\bibitem[Ma]{Manhe} R. Ma\~n\'e, \emph{Ergodic theory and differentiable dynamics,} Springer-Verlag (1983).

\bibitem[Men]{Mendes} P. Mendes,  On Anosov diffeomorphisms on the plane. \emph{Proc. Amer. Math. Soc.} {\bf 63} (1977), no. 2, 231--235. 

\bibitem[Mosh]{Mosh} L. Mosher, Dynamical systems and the homology norm
	of a 3-manifold II, \emph{Invent. Math.} {\bf 107} (1992) 243--281.


\bibitem[Mo]{Mostow} G. Mostow, Quasi-conformal mappings in $n$-space and the rigidity of hyperbolic space forms. \emph{Inst. Hautes \'Etudes Sci. Publ. Math.} No. 34 (1968) 53--104.
%
%
\bibitem[Pot]{Potrie-ICM} R. Potrie, Robust dynamics, geometric structures and topological classification, \emph{Proceedings of the International Congress of Mathematicians} Vol II,  2057-2080 (2018). 

\bibitem[PS]{PS} C. Pugh, M. Shub, Stable ergodicity and partial hyperbolicity. \emph{ International Conference on Dynamical Systems (Montevideo, 1995)}, 182--187, Pitman Res. Notes Math. Ser., 362, Longman, Harlow, 1996. 

\bibitem[RS]{Ro-St} R. Roberts and M. Stein, Group actions on
	order trees, \emph{Topol. Appl.} {\bf 115} (2001) 175--201.

\bibitem[Ros]{Rosenberg} H. Rosenberg,  Foliations by planes,  {\em Topology}
	{\bf 7} (1968) 131--138.
%
\bibitem[Th$_1$]{Th1} W. Thurston, \emph{The geometry and topology of 
$3$-manifolds,} Princeton University Lectures Notes, 1982.

\bibitem[Th$_2$]{Th2} W. Thurston, Three-manifolds, Foliations and Circles, I, \emph{Preprint} arXiv:math/9712268

\bibitem[Ur]{Ures} R. Ures, \emph{Personal communication}. 

\bibitem[W]{Wil} A. Wilkinson, Conservative partially hyperbolic dynamics, \emph{Proceedings of the International Congress of Mathematicians}. Volume III, 1816--1836, Hindustan Book Agency, New Delhi, 2010.
%
\end{thebibliography}
\end{document}